\documentclass{amsart}
\usepackage[T1]{fontenc}
\usepackage[utf8]{inputenc}
\usepackage{lmodern}
\usepackage{amsmath,amssymb,amsthm,mathtools,mathrsfs}
\usepackage{microtype,needspace,etoolbox}
\usepackage[hidelinks,hypertexnames=false]{hyperref}
\numberwithin{equation}{section}
\newtheorem{theorem}{Theorem}[section]
\newtheorem{proposition}[theorem]{Proposition}
\newtheorem{lemma}[theorem]{Lemma}
\newtheorem{corollary}[theorem]{Corollary}
\theoremstyle{definition}

\theoremstyle{remark}

\BeforeBeginEnvironment{theorem}{\Needspace{10\baselineskip}}
\BeforeBeginEnvironment{proposition}{\Needspace{9\baselineskip}}
\newcommand{\Q}{\mathbb Q}
\newcommand{\HH}{\mathbb H_X}

\newcommand{\fk}[2]{f_{K_X,#2}^{(#1)}}
\newcommand{\fkk}[2]{f_{K_X^2,#2}^{(#1)}}
\newcommand{\gk}[2]{g_{K_X,#2}^{(#1)}}
\newcommand{\gkk}[2]{g_{K_X^2,#2}^{(#1)}}
\newcommand{\restr}[1]{\left.#1\right|_{H^*(X^{[n]})}}
\DeclareMathOperator{\ad}{ad}
\DeclareMathOperator{\ch}{ch}
\DeclareMathOperator{\Coeff}{Coeff}
\DeclareMathOperator{\td}{td}
\DeclareMathOperator{\Id}{Id}

\DeclareMathOperator{\sgn}{sgn}
\DeclareMathOperator{\Sym}{Sym}

\DeclareMathOperator{\Hom}{Hom}

\allowdisplaybreaks[1]
\title[The canonical class and Chern character operators]{The canonical class and Chern character operators\newline on Hilbert schemes of points}
\author{Wei-Ping Li, Zhongyi Shi}
\date{}
\subjclass[2020]{14C05, 14C17, 17B37, 17B69}
\keywords{Hilbert scheme of points, canonical class, Chern character operator, Heisenberg algebra, affine Yangian}
\hypersetup{pdftitle={The canonical class and Chern character operators on Hilbert schemes of points},pdfauthor={Wei-Ping Li, Zhongyi Shi}}
\begin{document}
\begin{abstract}
Let \(X\) be a smooth projective complex surface. We obtain explicit formulas for the Chern character operators on the Hilbert schemes of points \(X^{[n]}\) in terms of Nakajima operators, determining in particular the coefficients of the terms involving the canonical class \(K_X\) and its square. Consequently, the cohomology ring structure of the Hilbert scheme is determined. As an application, we give an alternative proof of the action of the affine Yangian \(Y_{t_1,t_2}(\widehat{\mathfrak{gl}}_1)\) on the cohomological Fock space
\[
\mathbb H_X=\bigoplus_{n\geq0}H^*(X^{[n]},\mathbb Q).
\]
\end{abstract}
\maketitle
{\small\tableofcontents}

\section{Introduction}\label{sec:introduction}
The cohomology of Hilbert schemes of points on a smooth projective complex surface carries the Heisenberg action constructed by Grojnowski and Nakajima \cite{Grojnowski,Nakajima}. A fundamental advance was made by Lehn \cite{Lehn}, who introduced the boundary operator and established its commutator formula with Nakajima operators, revealing a remarkable connection with Virasoro operators and the canonical class of the surface. His work provided an effective operator-theoretic approach to computing the Chern classes of tautological bundles, establishing a crucial link between the Heisenberg representation and the intersection theory of Hilbert schemes. 

In related developments, Lehn and Sorger
\cite{LehnSorger01} and, independently, Vasserot
\cite{VasserotHilb} described the cohomology rings
of Hilbert schemes of points on the affine plane
in terms of symmetric-group algebras.
Ellingsrud, G\"ottsche, and Lehn established universality
results for the cobordism classes of Hilbert schemes
and integrals involving Chern classes of tautological
sheaves \cite{EGL}.
Building on Lehn's operator-theoretic framework,
Li, Qin, and Wang obtained normal-ordered expansions
of higher boundary derivatives and Chern character
operators in terms of Nakajima operators \cite{LQW}. Related constructions of vertex operators on moduli spaces of sheaves were developed by Carlsson \cite{CarlssonThesis}, including contributions involving the canonical class. Carlsson and Okounkov expressed operators associated with Ext bundles on products of Hilbert schemes in terms of Nakajima operators \cite{CO}. Carlsson subsequently used vertex-operator techniques to establish quasimodularity results for equivariant Chern numbers on Hilbert schemes \cite{CarlssonQuasimodular}.
In the expansions of Li, Qin, and Wang, the coefficients of the terms independent of the canonical class and those involving the Euler class are known. In this paper, we determine the remaining coefficients involving the canonical class \(K_X\) and its square \(K_X^2\). 

Note that the cohomology ring structure of Hilbert schemes of points on surfaces is determined by the expression of Chern character operators in terms of Nakajima operators due to the results in \cite{LQW} in the projective surface case. In fact, in \cite{LehnSorger01}, Lehn and Sorger first found a complete and explicit such formulae for affine plane $\mathbb C^2$ and hence determined the cohomology ring structure in this case. On $\mathbb C^2$, the canonical class is zero. For general projective surface, Lehn \cite{Lehn} found the explicit formula for the first Chern character operator. For higher Chern character operators,  it is very difficult to find such complete formulae due to the appearance of canonical class. Tang in \cite{Tang} found the formula for the second Chern character operator. In the paper \cite{LQW}, the formula was found for the terms not involving the canonical class. Since we found the complete formula in this paper, the cohomology ring structure of the Hilbert scheme is determined. 

The explicit formulas for the canonical-class coefficients allow us to describe the corrections to the \(W\)-algebra operators \(\mathfrak J_m^p\) of Li, Qin, and Wang \cite{LQW}. Using these formulas, we give a direct proof of an affine Yangian action on the cohomological Fock space \(\mathbb H_X\). A related action was constructed by Pfammatter \cite{Pf} using a different geometric approach. More recently, Mellit, Minets, Schiffmann, and Vasserot
developed cohomological Hall algebras of zero-dimensional
sheaves on surfaces and described the actions of their
doubles on Hilbert schemes of points, establishing
connections with Heisenberg and Virasoro algebras
\cite{MMSV}. Our construction is also closely related to the rank-one \(\mathbf{SH}^{\mathbf c}\)-action of Schiffmann and Vasserot \cite{SV} on the equivariant cohomology of Hilbert schemes of points on \(\mathbb A^2\). Maulik and Okounkov constructed Yangian actions on
the equivariant cohomology of Nakajima quiver varieties
using geometric $R$-matrices \cite{MO}.
For the Jordan quiver, the rank-one Nakajima varieties
are Hilbert schemes of points on $\mathbb A^2$.
Schiffmann and Vasserot subsequently studied the
generators of preprojective cohomological Hall algebras
\cite{SVGenerators} and established an isomorphism
with the positive halves of the corresponding
Maulik--Okounkov Yangians, compatible with their
actions on quiver-variety cohomology \cite{SVYangian}. Here we work in ordinary cohomology for arbitrary smooth projective surfaces, with the contributions of \(K_X\) and \(K_X^2\) determined explicitly.

Throughout, write
\[
K_X=c_1(\omega_X),\qquad e_X=c_2(T_X),\qquad
\HH=\bigoplus_{n\geq0}H^*(X^{[n]},\Q).
\]
Here $n$ denotes the number of points, and negative Nakajima modes are creation operators. We use $\mathfrak a_m$, $\mathfrak L_m$, $\mathfrak d$, $\mathfrak G_k$, and $\mathfrak J_m^p$ for the Nakajima, Virasoro, boundary, Chern character, and $W$-algebra operators, respectively. We distinguish the class $G_k(\gamma,n)$ from its cup-product operator $\mathfrak G_k(\gamma)$.

Sections~\ref{sec:background} and~\ref{sec:Chern-background} recall the geometric setup and the relevant operator identities. In Section~\ref{sec:recursions}, we derive recursions for the coefficients involving $K_X$ and $K_X^2$ from Lehn's boundary formula. We introduce several combinatoric tools to achieve this. Iterating a merge operator and applying weighted forest identities yield explicit finite sums, including the marked-block formulas in Theorems~\ref{thm:fK} and~\ref{thm:fK2} and Proposition~\ref{prop:Phi}. The same section then reconstructs the remaining coefficients of the Chern character operators by finite shifts of Nakajima operators (Theorem~\ref{thm:reconstruction}), with an example for $\mathfrak G_2$ in Proposition~\ref{prop:G2}.

For the application, define $H^*(X)$-parametrized operators $E_k,F_k:\HH\otimes H^*(X)\to\HH$ by
\begin{equation}\label{eq:intro-EF}
E_k(\gamma)=(\ad\mathfrak d)^k\mathfrak a_{-1}(\gamma),\qquad
F_k(\gamma)=-(-\ad\mathfrak d)^k\mathfrak a_1(\gamma),\qquad k\geq0,
\end{equation}
where $E_k(\gamma)v=E_k(v\otimes\gamma)$, and similarly for $F_k$. On $X^{[n]}\times X$, consider the class-valued series
\begin{equation}\label{eq:intro-kappa}
\mathscr C_n(z)=\sum_{d\geq0}\kappa_{d,n}z^{-d-1}
=\left.\frac{c(\mathcal I_{\mathcal Z_n},z+u)/c(\mathcal I_{\mathcal Z_n},z)-1}{u}\right|_{u=K_X},
\qquad H_n(z)=1+K_X\mathscr C_n(z).
\end{equation}
Here $c$ denotes the signed Chern polynomial, and division by the independent variable $u$ is performed before specialization to $K_X$. A key identity is
\begin{equation}\label{eq:intro-goal}
\restr{[E_k(\gamma),\mathfrak a_1(1_X)]}
=\pi_{1*}(\kappa_{k,n}\cup\pi_2^*\gamma)\cup-.
\end{equation}
Its coefficient proof uses the formulas obtained in Section~\ref{sec:recursions} and the two combinatorial identities proved independently in Appendix~\ref{app:coefficient-identities}.

Section~\ref{sec:currents} begins with the main result, Theorem~\ref{thm:main}, which states the relations among the raising, lowering, and Cartan currents. The subsequent subsections prove the mixed commutator, the relations with the Cartan currents, the quadratic $EE$ and $FF$ relations, and the graded cubic Serre relations. These identities yield an $H^*(X)$-parametrized affine Yangian action on $\HH$, with the deformation parameters acting through $K_X$ and $e_X$ on the $H^*(X)$-insertions. The presentation and its verification do not require inverting either characteristic class.

\textbf{Use of generative AI}: We use ChatGPT 6 Astra during the preparation of this manuscript to assist with exploratory mathematical calculations, the organization and presentation of proofs, and English-language and LaTeX editing. The authors take full responsibility for the mathematical statements, proofs, and conclusions presented in this paper.

\section{Hilbert schemes, Heisenberg operators, and boundary differentiation}\label{sec:background}
\subsection{Hilbert schemes and the Fock space}
The Hilbert scheme $X^{[n]}$ parametrizes zero-dimensional subschemes of length $n$. It is smooth of complex dimension $2n$ (see the details in \cite{Fogarty}); set $X^{[0]}=\mathrm{pt}$. Let
\[
\mathcal Z_n\subset X^{[n]}\times X,\qquad
\pi_1:X^{[n]}\times X\to X^{[n]},\qquad \pi_2:X^{[n]}\times X\to X
\]
be the universal subscheme and projections. The homology of Hilbert schemes of points in the projective
plane was studied by Ellingsrud and Str{\o}mme \cite{ES}.
For a locally free sheaf $V$, its tautological bundle is $V^{[n]}=\pi_{1*}(\mathcal O_{\mathcal Z_n}\otimes\pi_2^*V)$, of rank $n\operatorname{rk}V$. We use rational cohomology as in \cite{Qin}.

The Poincar\'e pairings give a supersymmetric pairing on $\HH$. The vacuum is $|0\rangle=1\in H^0(X^{[0]})$. For a homogeneous insertion $\gamma$, put $\bar\gamma=\deg\gamma\bmod2$. Every bracket of inserted operators is the supercommutator $[A,B]=AB-(-1)^{|A||B|}BA$.

For $r>0$, the incidence correspondence $Q^{[n+r,n]}\subset X^{[n+r]}
\times X\times X^{[n]}$ consists of $(\xi,x,\eta)$ with $\eta\subset\xi$ and $\mathcal I_\eta/\mathcal I_\xi$ of length $r$ supported at $x$. With projections $p_+,p_X,p_-$ from $X^{[n+r]}
\times X\times X^{[n]}$ to its first, second and third factor respectively, define
\begin{align}
\mathfrak a_{-r}(\alpha)(v)&=(p_+)_*\bigl([Q^{[n+r,n]}]\cup p_X^*\alpha\cup p_-^*v\bigr),\label{eq:creation}\\
\mathfrak a_r(\alpha)(v)&=(-1)^r(p_-)_*\bigl([Q^{[n+r,n]}]\cup p_X^*\alpha\cup p_+^*v\bigr),\label{eq:annihilation}
\end{align}
using the top-dimensional fundamental cycle. Set $\mathfrak a_0=0$. By Nakajima's theorem \cite{Nakajima}, these operators
satisfy the commutation relations of Heisenberg algebra.
\begin{equation}\label{eq:Heisenberg}
[\mathfrak a_m(\alpha),\mathfrak a_n(\beta)]
=-m\delta_{m,-n}\left(\int_X\alpha\beta\right)\operatorname{Id}.
\end{equation}
The negative modes generate $\HH$ from the vacuum, while positive modes kill it eventually (\cite{Grojnowski,Nakajima,NakajimaBook}). In particular,
\begin{equation}\label{eq:Fock}
\HH\simeq\Sym\!\left(\bigoplus_{r\geq1}H^{\mathrm{even}}(X)q^r\right)
\otimes\bigwedge\!\left(\bigoplus_{r\geq1}H^{\mathrm{odd}}(X)q^r\right),
\end{equation}
with the cohomological shift $2r-2$ on the $r$th creation mode understood. The unit is
\begin{equation}\label{eq:unit}
1_{X^{[n]}}=\frac{\mathfrak a_{-1}(1_X)^n}{n!}|0\rangle.
\end{equation}
The graded adjoint, defined by $\langle Av,w\rangle=(-1)^{|A||v|}\langle v,A^\dagger w\rangle$, satisfies
\begin{equation}\label{eq:graded-adjoint}
(AB)^\dagger=(-1)^{|A||B|}B^\dagger A^\dagger,\quad
[A,B]^\dagger=-[A^\dagger,B^\dagger],\quad
\mathfrak a_{-r}(\alpha)^\dagger=(-1)^r\mathfrak a_r(\alpha).
\end{equation}

\subsection{Diagonal insertions and generalized partitions}
Let $\tau_r:X\to X^r$ be the small diagonal. Under a graded K\"unneth decomposition $\tau_{r*}\alpha=\sum_j\alpha_{j,1}\otimes\cdots\otimes\alpha_{j,r}$, write
\[
\mathfrak a_{n_1}\cdots\mathfrak a_{n_r}(\tau_{r*}\alpha)
=\sum_j\mathfrak a_{n_1}(\alpha_{j,1})\cdots\mathfrak a_{n_r}(\alpha_{j,r}).
\]
Normal ordering puts negative modes to the left. Following \cite[Definition~4.1(i)]{LQW}, a generalized partition
is a finite multiset of nonzero integers, written as
\[
\lambda=(\cdots(-2)^{m_{-2}}(-1)^{m_{-1}}1^{m_1}2^{m_2}\cdots),
\]
where $m_i=m_i(\lambda)$ denotes the multiplicity of $i$,
and only finitely many $m_i$ are nonzero.
We define
\begin{equation}\label{eq:partition-data}
\begin{aligned}
\ell(\lambda)&=\sum_{i\ne0}m_i,
&|\lambda|&=\sum_{i\ne0}i m_i,\\
s(\lambda)&=\sum_{i\ne0}i^2m_i,
&\lambda!&=\prod_{i\ne0}m_i!.
\end{aligned}
\end{equation}
The notation $\mathfrak a_\lambda(\tau_*\alpha)$ includes one small-diagonal insertion and normal ordering. The diagonal reordering identity of \cite[Lemma 3.2]{LQW} inserts $e_X$ for an additional internal contraction. 

\subsection{The boundary and Virasoro operators}
The boundary operator and its differentiation are
\begin{equation}\label{eq:boundary}
\restr{\mathfrak d}=\ch_1(\mathcal O_X^{[n]})\cup-,\qquad
A'=[\mathfrak d,A],\qquad A^{(k)}=(\ad\mathfrak d)^kA.
\end{equation}
Define $\mathfrak L_m(\alpha)=-\frac12\sum_{q\in\mathbb Z}:\mathfrak a_q\mathfrak a_{m-q}:(\tau_{2*}\alpha)$. In the convention \eqref{eq:Heisenberg}, Lehn's identities are
\begin{align}
[\mathfrak L_m(\alpha),\mathfrak a_n(\beta)]&=-n\mathfrak a_{m+n}(\alpha\beta),\label{eq:La}\\
[\mathfrak L_m(\alpha),\mathfrak L_n(\beta)]&=(m-n)\mathfrak L_{m+n}(\alpha\beta)
+\frac{m^3-m}{12}\delta_{m,-n}\!\int_Xe_X\alpha\beta\,\Id,\label{eq:LL}\\
\mathfrak a_m'(\alpha)&=m\mathfrak L_m(\alpha)-\frac{m(|m|-1)}2\mathfrak a_m(K_X\alpha).
\label{eq:Lehn}
\end{align}
See \cite{Lehn,LQW}. Equivalently,
\begin{equation}\label{eq:d-normal}
\mathfrak d=-\sum_{\substack{\ell(\lambda)=3\\|\lambda|=0}}\frac{\mathfrak a_\lambda(\tau_*1_X)}{\lambda!}
-\frac12\sum_{m\geq1}(m-1)\mathfrak a_{-m}\mathfrak a_m(\tau_{2*}K_X).
\end{equation}
The $K_X$ term in \eqref{eq:Lehn} vanishes for $m=\pm1$. Hence
\begin{equation}\label{eq:mode-shift}
[\mathfrak d,\mathfrak a_{-1}(1_X)]=-\mathfrak L_{-1}(1_X),\qquad
[-\mathfrak L_{-1}(1_X),\mathfrak a_m(\gamma)]=m\mathfrak a_{m-1}(\gamma).
\end{equation}

\section{Chern character and \texorpdfstring{$W$}{W}-algebra operators}\label{sec:Chern-background}
Set
\begin{align}
\Gamma_{k,n}&=\bigl(\ch(\mathcal O_{\mathcal Z_n})\pi_2^*\td(X)\bigr)_{k+2},\label{eq:Gamma}\\
G_k(\gamma,n)&=\pi_{1*}(\Gamma_{k,n}\cup\pi_2^*\gamma),\qquad
\restr{\mathfrak G_k(\gamma)}=G_k(\gamma,n)\cup-.
\label{eq:G-definition}
\end{align}
The subscript $k+2$ is complex cohomological degree before insertion. Put $\Gamma_{k,n}=0$ for $k<0$ and $G_k(\gamma,0)=0$. Grothendieck--Riemann--Roch gives
\begin{equation}\label{eq:G-GRR}
G_k(1_X,n)=\ch_k(\mathcal O_X^{[n]}),\quad
\restr{\mathfrak G_0(1_X)}=n\Id,\quad \mathfrak G_1(1_X)=\mathfrak d.
\end{equation}
Cup-product operators supercommute. The fundamental identity of \cite{Lehn} (or see \cite[Theorem 3.1]{LQW}) is
\begin{equation}\label{eq:G-a}
[\mathfrak G_k(\alpha),\mathfrak a_{-1}(\beta)]
=\frac1{k!}\mathfrak a_{-1}^{(k)}(\alpha\beta),\qquad k\geq0.
\end{equation}
For $k\geq2$, \cite[Theorem 4.7]{LQW} gives
\begin{align}
\mathfrak G_k(\gamma)={}&-\sum_{\substack{\ell(\lambda)=k+2\\|\lambda|=0}}
\frac{\mathfrak a_\lambda(\tau_*\gamma)}{\lambda!}
+\sum_{\substack{\ell(\lambda)=k\\|\lambda|=0}}
\frac{s(\lambda)-2}{24}\frac{\mathfrak a_\lambda(\tau_*(e_X\gamma))}{\lambda!}\notag\\
&+\sum_{\substack{\ell(\lambda)=k+1\\|\lambda|=0}}\gk{k}{\lambda}
\frac{\mathfrak a_\lambda(\tau_*(K_X\gamma))}{\lambda!}
+\sum_{\substack{\ell(\lambda)=k\\|\lambda|=0}}\gkk{k}{\lambda}
\frac{\mathfrak a_\lambda(\tau_*(K_X^2\gamma))}{\lambda!}.
\label{eq:G-expansion}
\end{align}
The numbers $g^{\bullet}_{\bullet, \bullet}$ are universal coefficients before specialization to an individual surface. No independence of $K_X^2$ and $e_X$ in its cohomology is assumed. The low degrees are
\begin{align}
\mathfrak G_0(\gamma)&=-\sum_{m\geq1}\mathfrak a_{-m}\mathfrak a_m(\tau_{2*}\gamma),\label{eq:G0}\\
\mathfrak G_1(\gamma)&=-\sum_{\substack{\ell(\lambda)=3\\|\lambda|=0}}
\frac{\mathfrak a_\lambda(\tau_*\gamma)}{\lambda!}
-\frac12\sum_{m\geq1}(m-1)\mathfrak a_{-m}\mathfrak a_m(\tau_{2*}(K_X\gamma)).\label{eq:G1}
\end{align}
For the $W$ modes, put $\mathfrak J_m^0=-\mathfrak a_m$, $\mathfrak J_m^1=\mathfrak L_m$, and, for $p\geq2$,
\begin{equation}\label{eq:J}
\begin{split}
\mathfrak J_m^p(\gamma)=p!\Bigg(&-\sum_{\substack{\ell(\lambda)=p+1\\|\lambda|=m}}
\frac{\mathfrak a_\lambda(\tau_*\gamma)}{\lambda!}\\
&+\sum_{\substack{\ell(\lambda)=p-1\\|\lambda|=m}}
\frac{s(\lambda)+m^2-2}{24}\frac{\mathfrak a_\lambda(\tau_*(e_X\gamma))}{\lambda!}\Bigg).
\end{split}
\end{equation}
Together with the identity, these span the Lie superalgebra of \cite[Theorem 5.5]{LQW}. Away from the unordered pairs $(p,q)=(0,0),(1,0),(2,0),(1,1)$ its commutator is
\begin{equation}\label{eq:W-commutator}
[\mathfrak J_m^p(\alpha),\mathfrak J_n^q(\beta)]
=(qm-pn)\mathfrak J_{m+n}^{p+q-1}(\alpha\beta)
-\frac{\Omega_{m,n}^{p,q}}{12}\mathfrak J_{m+n}^{p+q-3}(e_X\alpha\beta),
\end{equation}
where
\begin{align*}
\Omega_{m,n}^{p,q}={}&pq\bigl((p-1)(n^3-n)-(q-1)(m^3-m)\bigr)\\
&+mn\bigl(pn(p-1)(p+3q-2)-qm(q-1)(q+3p-2)\bigr).
\end{align*}
Three exceptional cases are \eqref{eq:Heisenberg}, \eqref{eq:La}, and \eqref{eq:LL}; the fourth is
\[
[\mathfrak J_m^2(\alpha),\mathfrak J_n^0(\beta)]
=-2n\mathfrak J_{m+n}^1(\alpha\beta)
+\frac{m^3-m}{6}\delta_{m,-n}\!\int_Xe_X\alpha\beta\,\Id.
\]
When $K_X\gamma=0$, the zero modes satisfy
\begin{equation}\label{eq:G-J-Kzero}
\mathfrak G_k(\gamma)=\frac{\mathfrak J_0^{k+1}(\gamma)}{(k+1)!}.
\end{equation}
For general $\gamma$, the terms involving the canonical class in \eqref{eq:G-expansion} must be retained.

\section{Explicit formulas for Chern character operators}\label{sec:recursions}
In this section, we determine the coefficients of the terms
involving the canonical class $K_X$ and its square $K_X^2$
in the Chern character operators. Our approach relies on
the higher boundary derivatives of Nakajima operators and
the fundamental commutator identity \eqref{eq:G-a}:
\begin{align*}
\mathfrak a_n^{(k)}(\alpha)
&=(\operatorname{ad}\mathfrak d)^k\mathfrak a_n(\alpha),\\
[\mathfrak G_k(\alpha),\mathfrak a_{-1}(\beta)]
&=\frac{1}{k!}\mathfrak a_{-1}^{(k)}(\alpha\beta).
\end{align*}
\subsection{Recursions for the derivative of Nakajima operators}
For $n>0$ and $k\geq2$, the higher derivative expansion has the form
\begin{align}
\mathfrak a_{-n}^{(k)}(\gamma)={}&n^k k!
\sum_{\substack{\ell(\lambda)=k+1\\|\lambda|=-n}}
\frac{\mathfrak a_\lambda(\tau_*\gamma)}{\lambda!}
-\frac{n^k k!}{24}
\sum_{\substack{\ell(\lambda)=k-1\\|\lambda|=-n}}
(s(\lambda)-1)\frac{\mathfrak a_\lambda(\tau_*(e_X\gamma))}{\lambda!}\notag\\
&+\sum_{\substack{\ell(\lambda)=k\\|\lambda|=-n}}\fk{k}{\lambda}
\frac{\mathfrak a_\lambda(\tau_*(K_X\gamma))}{\lambda!}
+\sum_{\substack{\ell(\lambda)=k-1\\|\lambda|=-n}}\fkk{k}{\lambda}
\frac{\mathfrak a_\lambda(\tau_*(K_X^2\gamma))}{\lambda!}.
\label{eq:derivative-expansion}
\end{align}
The first two terms are the universal formulas of \cite[Section 4]{LQW}; the first derivative is \eqref{eq:Lehn}. A formula for higher derivatives of Nakajima operators in equivariant cohomology was obtained in \cite[Theorem~6.4]{AlhwaimelQin}. Upon expansion, their formula is consistent with the corresponding results obtained here.

Let $I$ be a finite index set and
$\mathcal A=(p_i)_{i\in I}$ a labelled family of integers.
We call each $i\in I$ a label and $p_i$ its weight.
Distinct labels may have the same weight.
For any subset $B\subseteq I$, set
\begin{equation}\label{eq:weights}
p_B=\sum_{i\in B}p_i,\qquad
s(\mathcal A)=p_I=n>0,\qquad
\Omega(\mathcal A)=\sum_{i\in I}p_i(1-|p_i|).
\end{equation}
In particular, $s(\mathcal A)=p_I$ is called the total weight
of $\mathcal A$.
The notation $s(\mathcal A)$ is the sum of the weights, which should be distinguished from the square sum $s(\lambda)$. For $\lambda=(-p_i)_{i\in I}$, write
$f_{K_X}^{(k)}(\mathcal A)=\fk{k}{\lambda}$ and
$f_{K_X^2}^{(k)}(\mathcal A)=\fkk{k}{\lambda}$
whenever the corresponding coefficients are defined. Since the coefficients are indexed by generalized partitions,
which are multisets, these functions are symmetric in
the weights $p_i$. Equal-valued entries are still separately labelled.

Let $F$ be a function on finite multisets of integer weights.
For distinct $a,b\in I$, let $\mathcal A_{a,b}$
be obtained from $\mathcal A$ by replacing $p_a,p_b$
with $p_a+p_b$. Define the merge operator $\mathfrak U$ by
\begin{equation}\label{eq:merge-def}
(\mathfrak U F)(\mathcal A)
=\sum_{\substack{\{a,b\}\subseteq I\\a\ne b}}
(p_a+p_b)F(\mathcal A_{a,b}).
\end{equation}
Here the sum runs over unordered pairs of distinct labels.
Merging preserves the total weight of $\mathcal A$.
In the definition of $\mathfrak U$, terms with
$p_a+p_b=0$ may be omitted, since their coefficients vanish. 

For the coefficients of boundary derivatives regarding canonical classes, we have the following recursion formulas using merge operator $\mathfrak U$.
\begin{proposition}\label{prop:recursions}
The coefficients in \eqref{eq:derivative-expansion} satisfy
\begin{align}
f_{K_X}^{(k+1)}(\mathcal A)
&=\mathfrak U f_{K_X}^{(k)}(\mathcal A)
-\frac12\Omega(\mathcal A)n^k k!,
&&|I|=k+1,\quad k\geq1,\label{eq:rec-K}\\
f_{K_X^2}^{(k+1)}(\mathcal A)
&=\mathfrak U f_{K_X^2}^{(k)}(\mathcal A)
-\frac12\Omega(\mathcal A)f_{K_X}^{(k)}(\mathcal A),
&&|I|=k,\quad k\geq2,\label{eq:rec-K2}
\end{align}
with initial values
\begin{equation}\label{eq:f-initial}
f_{K_X}^{(1)}(\{n\})=\frac{n(n-1)}2,\qquad
f_{K_X^2}^{(2)}(\{n\})=\frac{n^2(n-1)^2}{4}.
\end{equation}
\end{proposition}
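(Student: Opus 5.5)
The plan is to apply $\ad\mathfrak d$ to the expansion \eqref{eq:derivative-expansion} of $\mathfrak a_{-n}^{(k)}(\gamma)$. Using the normal-ordered form \eqref{eq:d-normal}, equivalently Lehn's formula \eqref{eq:Lehn}, we compute $[\mathfrak d,\mathfrak a_\lambda(\tau_*\beta)]$ for a normal-ordered monomial and then read off the coefficients of the $K_X$ and $K_X^2$ terms in $\mathfrak a_{-n}^{(k+1)}(\gamma)$. The initial values come from the first two derivatives. For $k=1$, \eqref{eq:Lehn} with $m=-n$ gives the coefficient $-\frac{-n(n-1)}{2}=\frac{n(n-1)}2$ of $\mathfrak a_{-n}(K_X\gamma)$, so $f_{K_X}^{(1)}(\{n\})=n(n-1)/2$. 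Differentiating once more, the $K_X$ part of $\mathfrak d$ acting on $\mathfrak a_{-n}(K_X\gamma)$ produces the same factor again, giving $f_{K_X^2}^{(2)}(\{n\})=\bigl(n(n-1)/2\bigr)^2$.

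For the recursion step, the derivation $\ad\mathfrak d$ acts on a product $\mathfrak a_{-p_1}\cdots\mathfrak a_{-p_r}(\tau_*\beta)$ factor by factor, and each factor is replaced by $\mathfrak a_{-p_i}'=-p_i\mathfrak L_{-p_i}-\frac{p_i(p_i-1)}2\mathfrak a_{-p_i}(K_X\cdot)$ (signs adjusted for negative $p_i$ via $|m|$). There are three kinds of resulting terms.
\begin{enumerate}
\item The cubic $\mathfrak L$-part splits a mode into two. This raises $\ell(\lambda)$ and produces the universal LQW terms, which are already known.
\item The $\mathfrak L$-part can also contract one of its modes against another factor $\mathfrak a_{-p_b}$ of the same monomial through the Heisenberg relation \eqref{eq:Heisenberg}, after the monomial is renormal-ordered. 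Because the diagonal insertion is shared, this contraction gives $\tau_*$ of the same class. Its net effect merges two modes $p_a,p_b$ into $p_a+p_b$ with weight $p_a+p_b$, coming from the coefficient $-n$ in \eqref{eq:La}. Running this backwards, the coefficient of a given $\lambda$ of length $k+1$ in the $(k+1)$st derivative collects $(p_a+p_b)f^{(k)}(\mathcal A_{a,b})$ over unordered pairs, which is exactly $\mathfrak U f^{(k)}$. The identical mechanism applies to the $K_X$-insertion and to the $K_X^2$-insertion terms.
\item The $K_X$ Lehn term multiplies each mode $\mathfrak a_{-p_i}$ by $-\frac{p_i(|p_i|-1)}{2}$ with an extra $K_X$. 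Summed over the factors, this gives $-\frac12\Omega(\mathcal A)$ times the previous coefficient. Applied to the leading universal term $n^kk!$, it yields the inhomogeneous term in \eqref{eq:rec-K}. Applied to $f_{K_X}^{(k)}$, it yields the term in \eqref{eq:rec-K2}.
\end{enumerate}

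Several potential contributions must be shown to vanish or to land in lengths that do not matter. These are contractions inside the $e_X$-terms, which only feed $e_X$ or $e_XK_X$ classes, and the splitting of $K_X$-terms into longer partitions. The relevant fact is that for a given $\ell(\lambda)$, the terms of length $k+1$ with $K_X$, or length $k$ with $K_X^2$, can only arise from the sources listed in (2) and (3). Degree counting shows this: $\ell(\lambda)$ plus the degree of the inserted class in $\tau_*$ is fixed by $k$, and $K_X$ has degree 1 while $e_X$ has degree 2. For the same reason, $K_Xe_X$ terms are ignored, and $K_X^2$ versus $e_X$ is kept formal, since the coefficients are universal.

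The main obstacle is step (2): justifying the merge coefficient precisely. Two things need care. First, splitting one mode and then contracting produces $\tau_*$ of the same class, with no $e_X$ correction, only when the contraction is between distinct original factors. Self-contractions inside a split yield the $e_X$ term via \cite[Lemma 3.2]{LQW}. Second, the multiplicities $\lambda!$ have to combine correctly when we pass from generalized partitions to labelled families. For the second point, I would verify the bookkeeping by working with labelled families, where $1/\lambda!$ is absorbed by summing over labels. I would then check the resulting formula against the known universal case, where the same computation must reproduce LQW's leading coefficient $n^kk!$ satisfying $\mathfrak U(n^kk!)=n^{k+1}(k+1)!$, which holds since there are $\binom{k+1}2$ pairs.
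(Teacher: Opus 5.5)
Your overall plan is the same as the paper's: apply $\ad\mathfrak d$ termwise to \eqref{eq:derivative-expansion} and isolate the $K_X$ and $K_X^2$ terms by degree counting, which is the inductive argument of \cite{LQW} that the paper cites. Your initial values, degree count, and step (3) are right, up to a sign slip. With $m=-p_i$, Lehn's term gives the factor $-\frac{m(|m|-1)}2=+\frac{p_i(|p_i|-1)}2$ per mode, and only with this sign does the sum equal $-\frac12\Omega(\mathcal A)$.

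The gap is in the step you correctly flag as the heart of the matter: the source of $\mathfrak U f^{(k)}$. In step (2), every mode of $A_1\cdots[\mathfrak d,A_i]\cdots A_r$ carries a tensor factor of a single small-diagonal class $\tau_{(r+1)*}\beta$. A contraction created by renormal-ordering therefore pairs two of those factors, and
\[
\sum_j\Bigl(\int_X\beta_{j,1}\beta_{j,2}\Bigr)\beta_{j,3}\otimes\cdots\otimes\beta_{j,r+1}=\tau_{(r-1)*}(e_X\beta).
\]
This is exactly the internal contraction of \cite[Lemma 3.2]{LQW} that inserts $e_X$. It produces the Euler-class terms, and on $K_X$- or $K_X^2$-terms it gives zero because $K_Xe_X=0$. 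So the claim that this contraction yields $\tau_*$ of the same class is false, and step (2) contributes nothing to either recursion. Your length bookkeeping is also inconsistent. A forward operation that merges two modes would feed length-$(k+1)$ terms of $\mathfrak a_{-n}^{(k+1)}$ from length-$(k+2)$ terms. But $\mathfrak U f^{(k)}(\mathcal A)$ evaluates $f^{(k)}$ on the length-$k$ families $\mathcal A_{a,b}$.

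The merge operator actually comes from the mechanism you set aside in step (1): the splitting $\mathfrak a_{-m}\mapsto -m\mathfrak L_{-m}$ with no further contraction. It must be applied not only to the leading terms but also to the $K_X$-terms (for \eqref{eq:rec-K}) and to the $K_X^2$-terms (for \eqref{eq:rec-K2}) of $\mathfrak a_{-n}^{(k)}$. Applying $\tau_{2*}$ to one diagonal factor only refines the diagonal, so this keeps the inserted class and raises the length by one. It replaces a mode $-(p_a+p_b)$ by $-p_a,-p_b$ with coefficient $p_a+p_b$ per unordered pair: the $\frac12$ in $\mathfrak L$ is compensated by the two ordered pairs, and $\lambda!$ absorbs repeated modes. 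Read backward, this is exactly $\sum_{\{a,b\}}(p_a+p_b)f^{(k)}(\mathcal A_{a,b})$.

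Your own check $\mathfrak U(n^kk!)=n^{k+1}(k+1)!$ is this splitting recursion applied to the leading coefficients. It holds because $\sum_{\{a,b\}}(p_a+p_b)=(k+1)n$ over $k+2$ labels, not because of the number of pairs. Once splitting is applied to all three families of terms, and internal contractions are recognized as producing only $e_X$-classes, your degree count and step (3) finish the proof as in the paper.
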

The proof follows the inductive argument of \cite[Theorem~4.2]{LQW}, using the derivative formula for $\mathfrak a_\lambda$ in \cite[Lemma~3.2(ii)]{LQW3} and the reordering formula in \cite[Lemma~3.4]{LQW3}. The multiplicity factors are absorbed by the normalization \(1/\lambda!\), where \(\lambda!=\prod_i m_i!\). Thus we may treat a generalized partition as a labelled family \(\mathcal A=(p_i)_{i\in I}\), without explicitly keeping track of the multiplicities.

\subsection{Combinatorial tools}\label{sec:forests}
Let $I$ be a finite index set with $|I|=m$.
For $1\le r\le m$, let $\Pi_r(I)$ denote the set of partitions of $I$ into $r$ nonempty blocks.
Each partition $\pi\in\Pi_r(I)$ is written as $\pi=\{B_1,\ldots,B_r\}$, where the blocks $B_j\subseteq I$ are pairwise disjoint and $\bigcup_{j=1}^r B_j=I$.
For example,
\[
\Pi_2(\{1,2,3\})
=
\bigl\{
\{\{1\},\{2,3\}\},
\{\{2\},\{1,3\}\},
\{\{3\},\{1,2\}\}
\bigr\}.
\]
For each $B\in\pi$, put
\[
p_B=\sum_{i\in B}p_i,
\qquad 1\le |B|\le m-r+1.
\]
Define
\begin{equation}\label{eq:forest-weight}
w(\pi)=\prod_{B\in\pi}p_B^{|B|-1},
\qquad
\mathcal A_\pi=(p_B)_{B\in\pi}.
\end{equation}
We now develop the combinatorial tools needed to solve
the recursions for the canonical-class coefficients.
We begin with a weighted-forest identity, which serves
as a key ingredient in the proof of the merge lemma.
Applying the merge lemma to the recursions in
Proposition~\ref{prop:recursions} then yields explicit
block-sum formulas for $f_{K_X}^{(k)}$ and
$f_{K_X^2}^{(k)}$.
Throughout the combinatorial identities below, zero
weights are allowed, with the convention $0^0=1$.
\begin{lemma}[Weighted forest identity]\label{lem:forest}
If $|I|=m\geq1$ and $1\leq r\leq m$, then
\begin{equation}\label{eq:forest}
\sum_{\pi\in\Pi_r(I)}w(\pi)=\binom{m-1}{r-1}n^{m-r}.
\end{equation}
This polynomial identity also holds for zero or negative weights.
\end{lemma}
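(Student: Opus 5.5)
The plan is to read both sides as generating functions for rooted forests and to reduce the identity to the multivariable Cayley formula. Since both sides of \eqref{eq:forest} are polynomials in the weights $(p_i)_{i\in I}$, with $n=p_I$, it suffices to prove the identity in the polynomial ring $\mathbb Z[p_i : i\in I]$. Specialization to zero or negative weights is then automatic. The convention $0^0=1$ is consistent with this, because a singleton block contributes the monomial $p_B^{0}=1$.

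\emph{Step 1: a single block.} For a rooted tree $T$ on a vertex set $V$, let $c_T(v)$ be the number of children of $v$. I would use the rooted form of the multivariable Cayley formula: for a fixed vertex $v_0\in V$,
\[
\sum_{T\text{ rooted at }v_0}\ \prod_{v\in V}p_v^{c_T(v)}=p_{v_0}\,p_V^{|V|-2},
\]
where $p_V=\sum_{v\in V}p_v$. Summing over $v_0\in V$ gives
\[
\sum_{T\text{ rooted tree on }V}\ \prod_{v\in V}p_v^{c_T(v)}=p_V^{|V|-1}.
\]
This is the factor $p_B^{|B|-1}$ attached to each block. The rooted formula is classical; if a self-contained argument is wanted, I would prove it by induction on $|V|$ by deleting a leaf, or via Prüfer codes, in which vertex $v$ appears exactly $c_T(v)$ times when the code is taken relative to the root. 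This is the only step with genuine content, and I expect it to be the main obstacle if it has to be proved from scratch rather than quoted.

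\emph{Step 2: forests.} Choosing a partition $\pi\in\Pi_r(I)$ together with a rooted tree on each block is the same as choosing a rooted forest on $I$ with exactly $r$ components. Hence Step 1 gives
\[
\sum_{\pi\in\Pi_r(I)}w(\pi)=\sum_{F}\ \prod_{i\in I}p_i^{c_F(i)},
\]
where $F$ runs over rooted forests on $I$ with $r$ trees.

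\emph{Step 3: adjoining a super-root.} Add a new vertex $0$ with an independent weight $y$ and join it to the $r$ roots. This is a bijection between rooted forests on $I$ with $r$ trees and trees on $I\cup\{0\}$ rooted at $0$ in which $0$ has exactly $r$ children. That vertex contributes the factor $y^{r}$. Applying the rooted Cayley formula of Step 1 with $V=I\cup\{0\}$, $|V|=m+1$ and $v_0=0$, the full sum over all $r$ equals
\[
y\,(y+n)^{m-1}.
\]
Extracting the coefficient of $y^r$ gives $\binom{m-1}{r-1}n^{m-r}$, which is \eqref{eq:forest}.

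As a sanity check I would verify the extreme cases directly. For $r=m$ every block is a singleton and both sides equal $1$. For $r=1$ there is one block and both sides equal $n^{m-1}$.
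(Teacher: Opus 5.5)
Your proof is correct and is essentially the paper's argument. The paper applies the weighted Pr\"ufer/Cayley identity to $m+1$ vertices with weights $y,p_1,\ldots,p_m$, deletes the vertex of weight $y$, and sums each component's internal tree together with its attaching vertex to get $p_B^{|B|-1}$; its attaching vertices are your roots and its deleted vertex is your super-root. Counting children $c_T(v)$ instead of $\deg_T(v)-1$ only changes the generating identity to $y(y+n)^{m-1}$ in place of the paper's $(y+n)^{m-1}=\sum_{r}y^{r-1}\sum_{\pi\in\Pi_r(I)}w(\pi)$. One small point to tidy: treat one-vertex blocks directly (weight $1$), since $p_{v_0}p_V^{|V|-2}$ is not a polynomial when $|V|=1$.
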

\begin{proof}
The Pr\"ufer correspondence gives the weighted tree identity
$\sum_T\prod_i p_i^{\deg_T(i)-1}=(\sum_i p_i)^{m-2}$ for $m\geq2$.
Apply it to the weights $y,p_1,\ldots,p_m$ and delete the vertex of weight $y$. Each resulting component $B$ contributes $p_B^{|B|-1}$ after summing its internal tree and the attaching vertex. A singleton contributes one. Therefore
\[
(y+n)^{m-1}=\sum_{r=1}^m y^{r-1}\sum_{\pi\in\Pi_r(I)}w(\pi).
\]
Coefficient comparison proves the result. This is a polynomial argument before specialization, so it does not divide by a zero block sum. The case $m=1$ is immediate.
\end{proof}

\begin{lemma}[Merge lemma]\label{lem:merge}
For $|I|=m$ and $0\leq\ell\leq m-1$,
\begin{equation}\label{eq:merge-powers}
\mathfrak U^\ell F(\mathcal A)
=\ell!\sum_{\pi\in\Pi_{m-\ell}(I)}w(\pi)F(\mathcal A_\pi).
\end{equation}
Each block has size at most $\ell+1$.
\end{lemma}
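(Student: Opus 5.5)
We argue by induction on $\ell$. For $\ell=0$ the only partition in $\Pi_m(I)$ is the one into singletons, which has $w=1$ and $\mathcal A_\pi=\mathcal A$, so \eqref{eq:merge-powers} reads $F(\mathcal A)=F(\mathcal A)$. For the inductive step, suppose the formula holds for $\ell$ with $\ell+1\le m-1$. Writing $\mathfrak U^{\ell+1}F=\mathfrak U(\mathfrak U^{\ell}F)$ and expanding the outer $\mathfrak U$ by \eqref{eq:merge-def} gives
\[
\mathfrak U^{\ell+1}F(\mathcal A)=\sum_{\{a,b\}}(p_a+p_b)\,\ell!\sum_{\sigma\in\Pi_{m-1-\ell}(I_{a,b})}w(\sigma)F\bigl((\mathcal A_{a,b})_\sigma\bigr),
\]
where $I_{a,b}$ is $I$ with $a,b$ replaced by a single label $c$ of weight $p_a+p_b$. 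A partition $\sigma$ of $I_{a,b}$ lifts uniquely to a partition $\pi\in\Pi_{m-\ell-1}(I)$ by replacing $c$ with $\{a,b\}$. The lifted partition has the same block sums, so $(\mathcal A_{a,b})_\sigma=\mathcal A_\pi$, and $a,b$ lie in the same block of $\pi$. Conversely, each pair $(\pi,\{a,b\})$ with $a\neq b$ in a common block of $\pi$ arises exactly once this way.

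The identity therefore reduces to a statement about a single block. Fix $\pi$ and a block $B$ of size $s$, and write $B'$ for the block obtained from $B$ by fusing $a,b$ into $c$. The weight ratio is $w(\sigma)/w(\pi)=p_B^{\,s-2}/p_B^{\,s-1}$ on that block, so we must show
\[
\sum_{\{a,b\}\subseteq B}(p_a+p_b)\,\ell!\,\bigl(\text{weight of }\sigma\text{ summed over }\pi\bigr)=(\ell+1)!\,w(\pi).
\]
This cannot hold block by block, because the factor $\ell+1$ has to be distributed among the blocks. The cleaner way to organize it is to \emph{not} collapse the sum over $\pi$ too early. Instead, we iterate merges in order and count sequences of $\ell$ merges, that is, ordered histories. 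A history of $\ell$ merges ending at $\pi$ consists, within each block $B$, of a sequence of $|B|-1$ merges, each weighted by the sum of its merged part. The histories in different blocks are interleaved, giving a multinomial factor $\ell!/\prod_B(|B|-1)!$.

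It therefore suffices to prove the single-block identity: for a set $B$ of size $s$ with total weight $p_B$, the sum over all merge histories of $B$ down to one block of the product of the merged weights equals $(s-1)!\,p_B^{\,s-1}$. Once this holds, the histories in each block contribute $(s-1)!\,p_B^{s-1}$, the multinomial factor cancels the $(s-1)!$'s, and we obtain exactly $\ell!\,w(\pi)$, which is \eqref{eq:merge-powers} for every $\ell$ directly. This makes the induction above unnecessary except as motivation.

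The single-block identity is the main obstacle, and we prove it by induction on $s$ using Lemma~\ref{lem:forest}. Classify histories by their last merge, which joins the two final parts $C$ and $B\setminus C$ and contributes the factor $p_B$. The earlier merges form two independent histories on $C$ and on $B\setminus C$, interleaved in $\binom{s-2}{|C|-1}$ ways. By induction they contribute $(|C|-1)!\,p_C^{|C|-1}$ and $(s-|C|-1)!\,p_{B\setminus C}^{\,s-|C|-1}$. Summing over unordered $\{C,B\setminus C\}$, the total becomes
\[
p_B\,(s-2)!\sum_{\pi\in\Pi_2(B)}w(\pi).
\]
By \eqref{eq:forest} with $r=2$, the inner sum is $(s-1)p_B^{\,s-2}$, so the total is $(s-1)!\,p_B^{\,s-1}$, as required. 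All steps are polynomial identities in the weights, so zero or negative weights (with $0^0=1$) cause no trouble. The bound on block sizes is immediate: a block of size $s$ requires $s-1\le\ell$ merges.
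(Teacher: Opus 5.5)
Your final argument is correct, but it is organized differently from the paper's. The paper stays inside a single induction on $\ell$. It writes $\mathfrak U^{r+1}F=\mathfrak U^{r}(\mathfrak UF)$, so the new merge is the \emph{last} one and acts on two blocks of some $\pi\in\Pi_{m-r}(I)$. For each target $\gamma\in\Pi_{m-r-1}(I)$ it recovers the predecessors by splitting one block $C$ of $\gamma$ via $\eta\in\Pi_2(C)$. Lemma~\ref{lem:forest} with two blocks then gives $(|C|-1)p_C^{|C|-1}$ for that block, and the factor $r+1$ comes globally from $\sum_{C\in\gamma}(|C|-1)=r+1$.

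You instead unroll $\mathfrak U^\ell$ into ordered merge histories. A history ending at $\pi$ factors into independent histories inside the blocks plus a shuffle, which accounts for the multinomial $\ell!/\prod_B(|B|-1)!$. This reduces everything to the one-block identity $h(B)=(|B|-1)!\,p_B^{|B|-1}$, which you prove by the same last-merge, two-block forest computation. Both proofs rest on the same local ingredient. Yours gives a transparent origin for the $\ell!$ (shuffles times per-block factorials) and a reusable closed formula for weighted merge histories. The cost is the bijection between histories and (partition, block histories, shuffle), which you justify adequately. The paper's version is shorter bookkeeping.

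The route you abandoned also closes in one line, without Lemma~\ref{lem:forest}. Expand the outer $\mathfrak U$ (the \emph{first} merge) and apply the induction hypothesis to $\mathcal A_{a,b}$. For $\pi\in\Pi_{m-\ell-1}(I)$, the coefficient of $F(\mathcal A_\pi)$ is then
\[
\ell!\sum_{\substack{B\in\pi\\|B|\geq2}}\Bigl(\sum_{\{a,b\}\subseteq B}(p_a+p_b)\Bigr)p_B^{|B|-2}\prod_{D\in\pi\setminus\{B\}}p_D^{|D|-1}
=\ell!\,w(\pi)\sum_{B\in\pi}(|B|-1)=(\ell+1)!\,w(\pi).
\]
The first equality holds because each label of $B$ lies in $|B|-1$ pairs, so $\sum_{\{a,b\}\subseteq B}(p_a+p_b)=(|B|-1)p_B$; the second uses $\sum_{B\in\pi}(|B|-1)=m-(m-\ell-1)=\ell+1$. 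You were right that $\ell+1$ does not appear block by block. It does not need to: it arises as a sum over blocks, just as in the paper's count.
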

\begin{proof}
The operator $\mathfrak U$ is linear in $F$. We prove the formula
by induction on $\ell$.

For $\ell=0$, the only partition in $\Pi_m(I)$ is the partition
into singletons, whose weight is one. For $\ell=1$, the assertion
follows directly from the definition of $\mathfrak U$.
Suppose that the formula holds for some $1\leq r\leq m-2$. Then
\[
\begin{aligned}
\mathfrak U^{r+1}F(\mathcal A)
&=r!\sum_{\pi\in\Pi_{m-r}(I)}
 w(\pi)\mathfrak U F(\mathcal A_\pi)\\
&=r!\sum_{\pi\in\Pi_{m-r}(I)}w(\pi)
 \sum_{\{\beta_1,\beta_2\}\in\binom{\pi}{2}}
 (p_{\beta_1}+p_{\beta_2})
 F\bigl((\mathcal A_\pi)_{\beta_1,\beta_2}\bigr).
\end{aligned}
\]
Here $\beta_1,\beta_2$ are labels of the weighted family
$\mathcal A_\pi$, namely blocks of $\pi$.
Thus each term $F\bigl((\mathcal A_\pi)_{\beta_1,\beta_2}\bigr)$
is of the form $F(\mathcal A_\gamma)$ for some
$\gamma\in\Pi_{m-r-1}(I)$. Write
\[
\mathfrak U^{r+1}F(\mathcal A)
=\sum_{\gamma\in\Pi_{m-r-1}(I)}
 c_\gamma F(\mathcal A_\gamma).
\]
It remains to compute $c_\gamma$.

Let $\gamma=\{C_1,\ldots,C_{m-r-1}\}$. For a block $C_i$
with $|C_i|>1$, choose $\eta\in\Pi_2(C_i)$.
Splitting $C_i$ according to $\eta$ gives
\[
\pi_{i,\eta}=(\gamma\setminus\{C_i\})\sqcup\eta
\in\Pi_{m-r}(I).
\]
Conversely, every partition $\pi\in\Pi_{m-r}(I)$ that merges to
$\gamma$ arises in this way. Therefore, by the induction hypothesis,
\[
\begin{aligned}
c_\gamma
&=r!\sum_{\substack{C\in\gamma\\|C|>1}}
 p_C\left(\prod_{D\in\gamma\setminus\{C\}}
 p_D^{|D|-1}\right)
 \sum_{\eta\in\Pi_2(C)}\prod_{B\in\eta}p_B^{|B|-1}\\
&=r!\sum_{\substack{C\in\gamma\\|C|>1}}
 (|C|-1)p_C^{|C|-1}
 \prod_{D\in\gamma\setminus\{C\}}p_D^{|D|-1}\\
&=r!\,w(\gamma)
 \sum_{\substack{C\in\gamma\\|C|>1}}(|C|-1)\\
&=(r+1)!\,w(\gamma).
\end{aligned}
\]
The second equality follows from Lemma~\ref{lem:forest}, since
\[
\sum_{\eta\in\Pi_2(C)}\prod_{B\in\eta}p_B^{|B|-1}
=(|C|-1)p_C^{|C|-2},
\]
and the last equality uses
\[
\sum_{\substack{C\in\gamma\\|C|>1}}(|C|-1)
=\sum_{C\in\gamma}(|C|-1)
=m-(m-r-1)=r+1.
\]
This proves the induction step, and hence the lemma.
\end{proof}

The merge lemma expresses iterated merges as weighted sums over set partitions. To obtain explicit formulas for the canonical-class coefficients, it remains to evaluate sums involving marked blocks and nested partitions.
The following two lemmas provide the required identities.
\begin{lemma}[A marked block]\label{lem:marked}
For $|I|=m$, $n=s(\mathcal A)$, $2\leq r\leq m$,
and a function $F$ of a block sum,
\begin{equation}\label{eq:marked}
\begin{split}
\sum_{\pi\in\Pi_r(I)}w(\pi)\sum_{B\in\pi}F(p_B)
={}&\sum_{f=1}^{m-r+1}\ \sum_{\substack{B\subseteq I\\|B|=f}}
\binom{m-f-1}{r-2}p_B^{f-1}\\
&\hspace{13mm}\cdot(n-p_B)^{m-f-r+1}F(p_B).
\end{split}
\end{equation}
For $r=1$, the left side is $n^{m-1}F(n)$.
\end{lemma}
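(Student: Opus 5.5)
Swap the order of summation: mark the block first, then sum over the rest. Every pair $(\pi,B)$ with $\pi\in\Pi_r(I)$ and $B\in\pi$ corresponds bijectively to a pair $(B,\sigma)$. Here $B\subseteq I$ is nonempty with $|B|=f$, and $\sigma=\pi\setminus\{B\}\in\Pi_{r-1}(I\setminus B)$. Since $\sigma$ has $r-1\ge1$ blocks, the complement $I\setminus B$ must be nonempty and contain at least $r-1$ labels. This forces $1\le f\le m-r+1$, which is exactly the range in \eqref{eq:marked}. The forest weight factors as $w(\pi)=p_B^{|B|-1}\,w(\sigma)$, and $F(p_B)$ depends only on $B$. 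Hence the left side of \eqref{eq:marked} equals
\[
\sum_{f=1}^{m-r+1}\sum_{\substack{B\subseteq I\\|B|=f}}p_B^{f-1}F(p_B)\sum_{\sigma\in\Pi_{r-1}(I\setminus B)}w(\sigma).
\]

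Next evaluate the inner sum with the weighted forest identity, Lemma~\ref{lem:forest}. Apply it to the labelled family $(p_i)_{i\in I\setminus B}$, which has $m-f$ labels and total weight $n-p_B$, with $r-1$ blocks. The hypotheses of the lemma hold: $m-f\ge r-1\ge1$. The lemma gives
\[
\sum_{\sigma\in\Pi_{r-1}(I\setminus B)}w(\sigma)=\binom{m-f-1}{r-2}(n-p_B)^{m-f-r+1}.
\]
Substituting this yields \eqref{eq:marked}. The total weight $n-p_B$ may be zero or negative, but Lemma~\ref{lem:forest} is stated as a polynomial identity with the convention $0^0=1$, so no division occurs. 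This covers the boundary case $f=m-r+1$, where the exponent is $0$.

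For $r=1$, the only partition is $\{I\}$. Its weight is $n^{m-1}$ and its unique block has sum $n$, so the left side is $n^{m-1}F(n)$.

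The only point needing care is the convention check. Lemma~\ref{lem:forest} must be applied to a possibly nonpositive total weight, which relies on its polynomial nature. Also, the restriction $r\ge2$ is essential: it guarantees that the complement of $B$ is nonempty, so the lemma applies with at least one block. That is why $r=1$ is stated separately.
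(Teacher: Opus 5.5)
Your proposal is correct and follows the paper's proof: mark the block $B$ first, factor $w(\pi)=p_B^{|B|-1}w(\sigma)$, and sum over the partitions of $I\setminus B$ into $r-1$ blocks with Lemma~\ref{lem:forest}, which also forces $1\leq |B|\leq m-r+1$. Your remarks on applying the forest identity as a polynomial identity (complementary weight zero or negative, and $0^0=1$ at $f=m-r+1$) are consistent with the paper's conventions.
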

\begin{proof}
Choose the marked subset $B$ first and partition its complement
into $r-1$ nonempty blocks. The latter weights sum by
\eqref{eq:forest}. The possibility of that partition gives precisely
$1\leq|B|\leq m-r+1$.
\end{proof}

\begin{lemma}[Refinement of a final partition]\label{lem:refinement}
For $1\leq s\leq r\leq m$ and a function $F$ of the final block sums,
\begin{equation}\label{eq:refinement}
\sum_{\pi\in\Pi_r(I)}w(\pi)
\sum_{\eta\in\Pi_s(\pi)}w(\eta;\mathcal A_\pi)F((\mathcal A_\pi)_\eta)
=\binom{m-s}{r-s}\sum_{\rho\in\Pi_s(I)}w(\rho)F(\mathcal A_\rho).
\end{equation}
Here $\eta$ partitions the labelled blocks of $\pi$ and
$w(\eta;\mathcal A_\pi)=\prod_{C\in\eta}(\sum_{B\in C}p_B)^{|C|-1}$.
\end{lemma}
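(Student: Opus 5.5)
Fix a partition $\rho\in\Pi_s(I)$ and collect all pairs $(\pi,\eta)$ on the left side of \eqref{eq:refinement} that produce it. The plan is to reorganize the double sum by the final partition and then apply Lemma~\ref{lem:forest} inside each final block.

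\emph{Step 1: reindex by the final partition.} A pair $(\pi,\eta)$ with $\pi\in\Pi_r(I)$ and $\eta\in\Pi_s(\pi)$ determines a coarser partition $\rho\in\Pi_s(I)$. Its blocks are $D_C=\bigcup_{B\in C}B$ for $C\in\eta$. By construction $(\mathcal A_\pi)_\eta=\mathcal A_\rho$, and $\sum_{B\in C}p_B=p_{D_C}$. Conversely, a pair mapping to $\rho$ is the same as a choice, for each $D\in\rho$, of a partition $\pi_D$ of $D$ into $r_D\ge1$ blocks, subject to $\sum_D r_D=r$. Then $\pi=\bigsqcup_D\pi_D$, and $\eta$ is the obvious grouping. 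This correspondence is bijective because the blocks are labelled.

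\emph{Step 2: factor the weight.} For such a pair,
\[
w(\pi)\,w(\eta;\mathcal A_\pi)=\prod_{D\in\rho}\Bigl(w(\pi_D)\,p_D^{\,r_D-1}\Bigr).
\]
So the coefficient of $F(\mathcal A_\rho)$ on the left side is
\[
\sum_{\substack{(r_D)_{D\in\rho},\ r_D\ge1\\ \sum r_D=r}}\ \prod_{D\in\rho}p_D^{\,r_D-1}\sum_{\pi_D\in\Pi_{r_D}(D)}w(\pi_D).
\]
Lemma~\ref{lem:forest}, applied to the subfamily $D$ with total weight $p_D$, gives
\[
\sum_{\pi_D\in\Pi_{r_D}(D)}w(\pi_D)=\binom{|D|-1}{r_D-1}p_D^{\,|D|-r_D}.
\]
Hence each factor becomes $\binom{|D|-1}{r_D-1}p_D^{|D|-1}$. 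The power of $p_D$ no longer depends on $r_D$, so $w(\rho)=\prod_D p_D^{|D|-1}$ factors out of the whole sum.

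\emph{Step 3: Vandermonde.} What remains is
\[
\sum_{\sum_D (r_D-1)=r-s}\ \prod_{D\in\rho}\binom{|D|-1}{r_D-1}.
\]
By the Vandermonde convolution this equals $\binom{\sum_D(|D|-1)}{r-s}=\binom{m-s}{r-s}$, which is the claimed coefficient.

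\emph{Main obstacle.} The only delicate point is the bookkeeping in Step 1: checking that the correspondence is bijective, so that no multiplicity factor appears when several blocks of $\pi$ have equal weights. This holds because all blocks are labelled by subsets of $I$. Zero block sums cause no trouble: Lemma~\ref{lem:forest} is a polynomial identity, and with the convention $0^0=1$ the exponent combination $p_D^{r_D-1}\cdot p_D^{|D|-r_D}$ is a valid polynomial identity before specialization.
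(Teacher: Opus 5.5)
Your proposal is correct and follows the paper's proof. You fix the final partition $\rho$, apply the weighted forest identity inside each final block to get $\binom{|D|-1}{r_D-1}p_D^{|D|-1}$, and sum the binomials by Vandermonde, which the paper phrases as extracting the coefficient of $t^{r-s}$ in $\prod_{D\in\rho}(1+t)^{|D|-1}=(1+t)^{m-s}$. Your remarks on the bijectivity of the reindexing and on zero block sums are accurate and make explicit details the paper leaves implicit.
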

\begin{proof}
Fix the final partition $\rho$. If $D\in\rho$ contains $j_D+1$ first-level blocks, then
$0\leq j_D\leq|D|-1$, $1\leq|D|\leq m-s+1$, and $\sum_Dj_D=r-s$.
Summing the two successive weights inside $D$ gives
$\binom{|D|-1}{j_D}p_D^{|D|-1}$ by \eqref{eq:forest}.
The coefficient of $t^{r-s}$ in
$\prod_{D\in\rho}(1+t)^{|D|-1}=(1+t)^{m-s}$
is $\binom{m-s}{r-s}$. This proves the claim.
\end{proof}

The empty set has one partition into zero blocks, of weight one; a nonempty set has none. We use $\binom{-1}{-1}=1$ at that endpoint and set every other invalid binomial coefficient to zero. All powers below have nonnegative exponents on their displayed ranges.

\subsection{Explicit formulas for the derivatives of Nakajima operators}
\begin{theorem}\label{thm:fK}
For $|I|=k\geq1$ and $s(\mathcal A)=n>0$,
\begin{equation}\label{eq:fK-partitions}
f_{K_X}^{(k)}(\mathcal A)
=-\frac12\sum_{r=1}^k(k-r)!(r-1)!n^{r-1}
\sum_{\pi\in\Pi_r(I)}w(\pi)\Omega(\mathcal A_\pi).
\end{equation}
For $k\geq2$, this is the explicit subset formula
\begin{align}
f_{K_X}^{(k)}(\mathcal A)={}&\frac{(k-1)!}{2}n^k(n-1)\notag\\
&-\frac12\sum_{r=2}^k(k-r)!(r-1)!n^{r-1}
\sum_{f=1}^{k-r+1}\ \sum_{\substack{B\subseteq I\\|B|=f}}
\binom{k-f-1}{r-2}\notag\\
&\hspace{30mm}\cdot p_B^f(1-|p_B|)(n-p_B)^{k-f-r+1}.
\label{eq:fK-subsets}
\end{align}
\end{theorem}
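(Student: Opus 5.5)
We solve the linear recursion \eqref{eq:rec-K} of Proposition~\ref{prop:recursions} by iterating it and then simplifying with the merge lemma. The total weight $n$ is preserved by merging, so the inhomogeneous term $-\frac12\Omega(\mathcal A)n^jj!$ stays of the same shape at every stage. Unwinding from $f_{K_X}^{(1)}$ gives, for $|I|=k$,
\[
f_{K_X}^{(k)}(\mathcal A)=\mathfrak U^{k-1}f_{K_X}^{(1)}(\mathcal A)-\frac12\sum_{j=1}^{k-1}j!\,n^j\,\mathfrak U^{k-1-j}\Omega(\mathcal A).
\]
Here the $j$-th term comes from applying \eqref{eq:rec-K} at level $j$, where the argument has $j+1$ labels, and then merging $k-1-j$ more times to reach $k$ labels. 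The recursion at level $j$ is evaluated on families with $j+1$ labels, and $\mathfrak U$ lowers the label count by one, so the indices match.

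Next we use the initial value. Since $f_{K_X}^{(1)}(\{n\})=\frac{n(n-1)}2=-\frac12\Omega(\{n\})$ for $n>0$, the leading term is also of $\Omega$-type. It is exactly the missing $j=0$ term, with $0!\,n^0=1$. So
\[
f_{K_X}^{(k)}=-\frac12\sum_{j=0}^{k-1}j!\,n^j\,\mathfrak U^{k-1-j}\Omega.
\]
We apply Lemma~\ref{lem:merge} with $\ell=k-1-j$, so that $\ell\le m-1$ with $m=k$ holds, and set $r=k-\ell=j+1$. Then $\mathfrak U^{k-1-j}\Omega(\mathcal A)=(k-r)!\sum_{\pi\in\Pi_r(I)}w(\pi)\Omega(\mathcal A_\pi)$. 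Since $j!\,n^j=(r-1)!\,n^{r-1}$, this is exactly \eqref{eq:fK-partitions}.

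For \eqref{eq:fK-subsets} we separate the $r=1$ term from the rest.
\begin{itemize}
\item At $r=1$ the only partition is $\{I\}$, with weight $n^{k-1}$ and $\Omega(\{n\})=n(1-n)$. This gives $-\frac12(k-1)!\,n^{k-1}\cdot n(1-n)=\frac{(k-1)!}2n^k(n-1)$, the first term.
\item For $r\ge2$, we have $\Omega(\mathcal A_\pi)=\sum_{B\in\pi}F(p_B)$ with $F(x)=x(1-|x|)$. Lemma~\ref{lem:marked} with $m=k$ then converts the inner sum into the subset sum, producing the factor $p_B^{f-1}F(p_B)=p_B^f(1-|p_B|)$. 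The binomial coefficient $\binom{k-f-1}{r-2}$ and the power $(n-p_B)^{k-f-r+1}$ appear with exactly the stated ranges.
\end{itemize}

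The main point needing care is the bookkeeping in the unwinding. The recursion holds only for $k\ge1$ with $|I|=k+1$, and $\mathfrak U$ is applied to functions defined on families of a given size. We must confirm that composing $\mathfrak U$ with the inhomogeneous term at each level commutes correctly. We also need that $\mathfrak U$ is linear and acts on $\Omega$ viewed as a function on arbitrary families, including those with zero block sums (harmless, since those terms carry coefficient zero, and Lemma~\ref{lem:forest} holds polynomially). Finally, the absolute values in $\Omega$ cause no trouble, because the merge lemma treats $F$ as an arbitrary function.
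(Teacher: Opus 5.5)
Your proposal is correct and is essentially the paper's proof. You unwind \eqref{eq:rec-K} down to the initial value rewritten as $-\frac12\Omega(\{n\})$, apply Lemma~\ref{lem:merge} with $r=k-\ell$ using that merging preserves the total weight $n$, read off the $r=1$ term directly, and for $r\geq2$ apply Lemma~\ref{lem:marked} with $F(x)=x(1-|x|)$. Your remark on zero block sums also matches what the paper leaves implicit: such terms are killed by the merge coefficient or by the factor $p_B^{|B|-1}$ in $w(\pi)$.
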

\begin{proof}
The initial value is $f_{K_X}^{(1)}(\{n\})=-\Omega(\{n\})/2$. Iterating \eqref{eq:rec-K} to this endpoint gives
\[
f_{K_X}^{(k)}(\mathcal A)=-\frac12\sum_{\ell=0}^{k-1}
\mathfrak U^\ell\bigl(\Omega(\mathcal A)n^{k-\ell-1}(k-\ell-1)!\bigr).
\]
The total remains $n$. Apply the merge lemma with $r=k-\ell$ to get \eqref{eq:fK-partitions}. The term $r=1$ is the first line of \eqref{eq:fK-subsets}. For $r\geq2$, apply \eqref{eq:marked} with $F(x)=x(1-|x|)$.
\end{proof}

\begin{corollary}\label{cor:fK-polynomial}
Equivalently, with $\sgn(0)=0$,
\begin{align}
f_{K_X}^{(k)}(\mathcal A)={}&\frac{(k-1)!}{2}n^{k+1}-\frac{k!}{2}n^k\notag\\
&+\frac{k!}2\sum_{f=1}^{k-1}
\sum_{a=f+1}^k(-1)^{a-f-1}\frac{1}{a(a-1)}\binom{k-f-1}{a-f-1}n^{k-a+1}\notag
\\
&\hspace{32mm}\cdot\sum_{\substack{B\subseteq I\\|B|=f}}\sgn(p_B)p_B^a.
\label{eq:fK-polynomial}
\end{align}
The double sum is empty when $k=1$.
\end{corollary}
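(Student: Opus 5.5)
The plan is to start from the subset formula \eqref{eq:fK-subsets} of Theorem~\ref{thm:fK}, or equivalently from the partition formula \eqref{eq:fK-partitions}. I would split $F(x)=x(1-|x|)$ as $F(x)=x-\sgn(x)x^2$, which is valid with $\sgn(0)=0$, and treat the linear and quadratic parts separately.

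\emph{Linear part.} I would use \eqref{eq:fK-partitions} and include the term $r=1$. Since the block sums of any $\pi$ add up to $n$, we have $\sum_{B\in\pi}p_B=n$. Lemma~\ref{lem:forest} then gives
\[
\sum_{\pi\in\Pi_r(I)}w(\pi)\sum_{B\in\pi}p_B=\binom{k-1}{r-1}n^{k-r+1}.
\]
So the linear contribution is
\[
-\tfrac12\sum_{r=1}^k(k-r)!(r-1)!\binom{k-1}{r-1}n^k=-\tfrac12\,k\,(k-1)!\,n^k=-\tfrac{k!}{2}n^k.
\]
For the quadratic part at $r=1$, we have $n>0$, so the term is $+\tfrac12(k-1)!\,n^{k+1}$. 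Together these give the first line of \eqref{eq:fK-polynomial}. This is consistent with the first line of \eqref{eq:fK-subsets}.

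\emph{Quadratic part, $r\ge2$.} By Lemma~\ref{lem:marked}, this part equals
\[
\tfrac12\sum_{r=2}^k(k-r)!(r-1)!n^{r-1}\sum_{f}\sum_{|B|=f}\binom{k-f-1}{r-2}\sgn(p_B)p_B^{f+1}(n-p_B)^{k-f-r+1}.
\]
I would expand $(n-p_B)^{k-f-r+1}$ binomially and collect the power $p_B^a$, where $a=f+1+i$. The accompanying power of $n$ is $n^{k-a+1}$, independent of $r$, and the sign is $(-1)^{a-f-1}$. I would then use the trinomial revision
\[
\binom{k-f-1}{r-2}\binom{k-f-r+1}{a-f-1}=\binom{k-f-1}{a-f-1}\binom{k-a}{r-2}.
\]
This reduces the claim to the scalar identity
\[
\sum_{r=2}^{k-a+2}(k-r)!\,(r-1)!\,\binom{k-a}{r-2}=\frac{k!}{a(a-1)},\qquad 2\le f+1\le a\le k.
\]
The ranges match the corollary: $f\le k-1$ from $r\ge2$, and $a\le k$ from $r\le k-a+2$.

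\emph{The scalar identity (main step).} Put $N=k-a$ and $j=r-2$. The left side becomes
\[
N!\,(a-2)!\sum_{j=0}^N(j+1)\binom{N-j+a-2}{a-2}.
\]
The inner sum is the coefficient of $x^N$ in $(1-x)^{-2}(1-x)^{-(a-1)}=(1-x)^{-(a+1)}$, namely $\binom{N+a}{a}$. Hence the left side equals
\[
N!\,(a-2)!\,\frac{(N+a)!}{a!\,N!}=\frac{k!}{a(a-1)}.
\]
This is the only non-bookkeeping step; the rest is reindexing. Substituting the identity back gives exactly the double sum in \eqref{eq:fK-polynomial}.

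For $k=1$, the range $f\le k-1$ is empty, so the double sum vanishes. The formula then reduces to $\tfrac12n^2-\tfrac12n=f_{K_X}^{(1)}(\{n\})$, in agreement with \eqref{eq:f-initial}.
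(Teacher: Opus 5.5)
Your proposal is correct and follows the paper's proof. Both write $\Omega(\mathcal A_\pi)=n-\sum_{B\in\pi}\sgn(p_B)p_B^2$ in \eqref{eq:fK-partitions}, use the forest identity for the $-\tfrac{k!}{2}n^k$ term, take the one-block term $\tfrac{(k-1)!}{2}n^{k+1}$, and expand $(n-p_B)^{k-f-r+1}$ in the marked-block formula. Both then reduce the coefficient to the same convolution identity $\sum_{\ell=0}^{M}(M-\ell+1)\binom{a+\ell-2}{a-2}=\binom{a+M}{a}$. The only difference is cosmetic: you apply the trinomial revision first and prove this identity by generating functions, while the paper pulls out $\binom{k-f-1}{a-f-1}$ at the end.
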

\begin{proof}
In \eqref{eq:fK-partitions} write
$\Omega(\mathcal A_\pi)=n-\sum_{B\in\pi}\sgn(p_B)p_B^2$.
The constant term contributes $-k!n^k/2$ by \eqref{eq:forest}; the one-block term of the second sum is $(k-1)!n^{k+1}/2$. For a subset of size $f<k$, expand $(n-p_B)^{k-f-r+1}$ and put $j=a-f-1$. The coefficient is simplified by
\begin{align*}
&\sum_{r=2}^{k-a+2}(k-r)!(r-1)!
\binom{k-f-1}{r-2}\binom{k-f-r+1}{j}\\
&\qquad=\frac{(k-f-1)!}{j!}
\sum_{\ell=0}^{k-a}(a+\ell-2)!\frac{k-a-\ell+1}{\ell!}\\
&\qquad=(k-f-1)!(f-1)!\binom{k}{a}\binom{a-2}{f-1}\\
&\qquad=\frac{k!}{a(a-1)}\binom{k-f-1}{a-f-1}.
\end{align*}
The second equality is the twice-summed binomial identity
$\sum_{\ell=0}^M(M-\ell+1)\binom{a+\ell-2}{a-2}=\binom{a+M}{a}$.
The remaining sign and power are $(-1)^j$ and $n^{k-a+1}$, proving the formula.
\end{proof}

Now we turn to the coefficients regarding $K_X^2$. 
For $|I|=m$, $1\leq r\leq m$ and $0\leq d\leq r-1$, put
\begin{equation}\label{eq:Phi-definition}
\Phi_{r,d}(\mathcal A)=\sum_{\pi\in\Pi_r(I)}w(\pi)
\sum_{\eta\in\Pi_{r-d}(\pi)}w(\eta;\mathcal A_\pi)
\Omega(\mathcal A_\pi)\Omega((\mathcal A_\pi)_\eta).
\end{equation}
The two factors of $\Omega$ are evaluated on the first-level and final block sums.
\begin{theorem}\label{thm:fK2}
For $|I|=k-1$, $k\geq2$ and $n>0$,
\begin{equation}\label{eq:fK2-solution}
f_{K_X^2}^{(k)}(\mathcal A)=\frac14
\sum_{1\leq s\leq r\leq k-1}(k-1-r)!(r-s)!(s-1)!n^{s-1}
\Phi_{r,r-s}(\mathcal A).
\end{equation}
The summands have the finite subset expression in Proposition~\ref{prop:Phi}.
\end{theorem}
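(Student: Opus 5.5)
The plan is to iterate the recursion \eqref{eq:rec-K2} down to the initial value, in the same way the proof of Theorem~\ref{thm:fK} iterates \eqref{eq:rec-K}. Then the merge lemma and the refinement lemma reduce the result to the stated double sum.

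\textbf{Step 1: iterate the recursion.} Fix $|I|=k-1$. Applying \eqref{eq:rec-K2} repeatedly gives
\[
f_{K_X^2}^{(k)}(\mathcal A)=\mathfrak U^{k-2}f_{K_X^2}^{(2)}(\mathcal A)
-\frac12\sum_{j=2}^{k-1}\mathfrak U^{k-1-j}\bigl(\Omega\cdot f_{K_X}^{(j)}\bigr)(\mathcal A),
\]
where each $f_{K_X}^{(j)}$ is evaluated on families with $j$ labels. The initial value is $f_{K_X^2}^{(2)}(\{n\})=\Omega(\{n\})^2/4$, since $\Omega(\{n\})=-n(n-1)$. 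We can therefore extend the sum to $j=1$: using $f_{K_X}^{(1)}(\{n\})=-\Omega(\{n\})/2$, the $j=1$ summand equals $\tfrac14\Omega^2$ on one block, which is exactly the first term. This gives the uniform formula
\[
f_{K_X^2}^{(k)}(\mathcal A)=-\frac12\sum_{j=1}^{k-1}\mathfrak U^{k-1-j}\bigl(\Omega\cdot f_{K_X}^{(j)}\bigr)(\mathcal A).
\]

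\textbf{Step 2: substitute the partition formula.} On a family $\mathcal B$ with $j$ labels, \eqref{eq:fK-partitions} gives
\[
f_{K_X}^{(j)}(\mathcal B)=-\frac12\sum_{s=1}^{j}(j-s)!(s-1)!n^{s-1}\sum_{\eta\in\Pi_s(\mathcal B)}w(\eta;\mathcal B)\Omega(\mathcal B_\eta).
\]
The total weight stays $n$ throughout, so the factor $n^{s-1}$ passes through $\mathfrak U$ unchanged.

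\textbf{Step 3: apply the merge lemma.} Lemma~\ref{lem:merge} with $m=k-1$ and $\ell=k-1-j$ converts $\mathfrak U^{k-1-j}G(\mathcal A)$ into
\[
(k-1-j)!\sum_{\pi\in\Pi_j(I)}w(\pi)G(\mathcal A_\pi).
\]
Here $G=\Omega\cdot f_{K_X}^{(j)}$. The first factor $\Omega$ is evaluated on the first-level blocks $\mathcal A_\pi$, and the inner $\eta$ partitions those blocks. Renaming $r=j$, the coefficient becomes
\[
\frac14(k-1-r)!(r-s)!(s-1)!n^{s-1},
\]
and the remaining double sum is exactly $\Phi_{r,r-s}(\mathcal A)$ from \eqref{eq:Phi-definition}, with $r-d=s$. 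The two factors of $-\tfrac12$ combine to give $\tfrac14$.

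\textbf{Main obstacle.} I expect the main difficulty to be the bookkeeping in Steps 1 and 3.
\begin{itemize}
\item The merge lemma is stated for a function $F$ of a labelled family. Here $F$ itself contains partitions of the labels of $\mathcal A_\pi$. One must check that merging acts on the outermost labels, so the nested structure is exactly $\pi$ followed by $\eta$. This holds because $F$ depends only on the multiset of weights, which is the symmetry noted after Proposition~\ref{prop:recursions}.
\item The ranges of $r$ and $s$ must be checked at the endpoints. The case $k=2$ should reduce to $r=s=1$, giving $\Omega(\{n\})^2/4=n^2(n-1)^2/4$, consistent with \eqref{eq:f-initial}.
\item The identification of the initial term with the $j=1$ summand must be verified.
\end{itemize}
Lemma~\ref{lem:refinement} is not needed for this statement. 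It would only enter when simplifying $\Phi$ further in Proposition~\ref{prop:Phi}.
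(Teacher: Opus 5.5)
Your proposal is correct and follows the paper's proof essentially verbatim. The paper also absorbs the initial value $f_{K_X^2}^{(2)}=-\Omega f_{K_X}^{(1)}/2$ as the last term of the iterated recursion, applies the merge lemma with $r=k-1-\ell$, and substitutes \eqref{eq:fK-partitions} on the merged blocks to obtain $\Phi_{r,r-s}$ with coefficient $\tfrac14(k-1-r)!(r-s)!(s-1)!n^{s-1}$. You are also right that Lemma~\ref{lem:refinement} is not needed here; it is used only in Proposition~\ref{prop:Phi}.
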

\begin{proof}
At a singleton, $f_{K_X^2}^{(2)}=-\Omega f_{K_X}^{(1)}/2$. Hence iteration of \eqref{eq:rec-K2} gives
\[
f_{K_X^2}^{(k)}(\mathcal A)=-\frac12\sum_{\ell=0}^{k-2}
\mathfrak U^\ell\bigl(\Omega(\mathcal A)f_{K_X}^{(k-1-\ell)}(\mathcal A)\bigr).
\]
Apply the merge lemma with $r=k-1-\ell$, then substitute \eqref{eq:fK-partitions} for the coefficient on the remaining $r$ blocks. The two signs give $1/4$; the two merge depths give $(k-1-r)!$ and $(r-s)!$; the remaining coefficient contributes $(s-1)!n^{s-1}$. Their weights and marked factors give \eqref{eq:Phi-definition}.
\end{proof}

\begin{proposition}\label{prop:Phi}
Let $|I|=m$, $1\leq r\leq m$, $0\leq d\leq r-1$ and $s=r-d$. Then
\begin{align}
\Phi_{r,d}(\mathcal A)={}&\sum_{f=1}^{m-s+1}\ \sum_{\substack{D\subseteq I\\|D|=f}}
\sum_{b=1}^f\ \sum_{\substack{B\subseteq D\\|B|=b}}p_B^b(1-|p_B|)\notag\\
&\cdot\sum_{j=0}^{\min(d,f-b)}\binom{m-f-s+1}{d-j}\binom{f-b-1}{j-1}
 p_D^j(p_D-p_B)^{f-b-j}\notag\\
&\cdot\Bigg[p_D(1-|p_D|)\binom{m-f-1}{s-2}(n-p_D)^{m-f-s+1}\notag\\
&\hspace{6mm}+\sum_{c=1}^{m-f-s+2}\ \sum_{\substack{C\subseteq I\backslash D\\|C|=c}}
\binom{m-f-c-1}{s-3}p_C^c(1-|p_C|)(n-p_D-p_C)^{m-f-c-s+2}\Bigg].
\label{eq:Phi-subsets}
\end{align}
The empty-partition convention of Section~\ref{sec:forests} applies. In particular, $j=0$ contributes only for $B=D$; a nonzero disjoint contribution requires $s\geq2$ and $c\leq m-f$.
\end{proposition}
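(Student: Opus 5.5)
The plan is to expand both factors of $\Omega$ in \eqref{eq:Phi-definition} as sums over blocks, and then to evaluate the resulting marked sums with Lemma~\ref{lem:forest}, Lemma~\ref{lem:marked} and the argument of Lemma~\ref{lem:refinement}.

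\emph{Expansion.} Write
\[
\Omega(\mathcal A_\pi)=\sum_{B\in\pi}p_B(1-|p_B|),\qquad
\Omega((\mathcal A_\pi)_\eta)=\sum_{E\in\eta}p_E(1-|p_E|),
\]
where a final block $E$ is regarded as the subset of $I$ that it covers. Then $\Phi_{r,d}$ becomes a sum over the following data:
\begin{itemize}
\item a final partition $\rho\in\Pi_s(I)$;
\item a refinement of $\rho$ into $r$ first-level blocks;
\item a marked first-level block $B$;
\item a marked final block $E$.
\end{itemize}
Let $D\in\rho$ be the final block containing $B$, with $|D|=f$ and $|B|=b$. The term then splits into two cases:
\begin{itemize}
\item $E=D$, which produces the first term in the bracket;
\item $E=C$ with $C\subseteq I\setminus D$, which produces the second term.
\end{itemize}
So I will sum in the order $D$, then $B\subseteq D$, then the remaining structure.

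\emph{Inside $D$.} Suppose $D$ contains $j+1$ first-level blocks, namely $B$ together with $j$ blocks partitioning $D\setminus B$. The weights are as follows.
\begin{itemize}
\item The block $B$ contributes $p_B^{b-1}\cdot p_B(1-|p_B|)=p_B^b(1-|p_B|)$.
\item The $j$ other first-level blocks sum, by Lemma~\ref{lem:forest} applied to $D\setminus B$ (total $p_D-p_B$, size $f-b$), to $\binom{f-b-1}{j-1}(p_D-p_B)^{f-b-j}$.
\item The final weight of $D$ is $p_D^{(j+1)-1}=p_D^j$.
\end{itemize}
The endpoint $j=0$ forces $D=B$, and this is exactly the convention $\binom{-1}{-1}=1$. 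This reproduces the middle line of \eqref{eq:Phi-subsets}.

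\emph{Outside $D$.} The complement $I\setminus D$ has $m-f$ elements. It is partitioned into $s-1$ final blocks, which together carry $d-j$ extra first-level splits. I rerun the proof of Lemma~\ref{lem:refinement} on $I\setminus D$:
\begin{itemize}
\item inside each final block $E'$, summing first-level partitions gives $\binom{|E'|-1}{j_{E'}}p_{E'}^{|E'|-1}$;
\item the constraint $\sum j_{E'}=d-j$ is handled by extracting the coefficient of $t^{d-j}$ in $(1+t)^{m-f-s+1}$.
\end{itemize}
This yields the factor $\binom{m-f-s+1}{d-j}$ times $\sum_{\rho'\in\Pi_{s-1}(I\setminus D)}w(\rho')G(\rho')$. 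The function $G$ depends on the case:
\begin{itemize}
\item If $E=D$, then $G$ is the constant $p_D(1-|p_D|)$, and Lemma~\ref{lem:forest} on $I\setminus D$ (total $n-p_D$) gives $\binom{m-f-1}{s-2}(n-p_D)^{m-f-s+1}$.
\item If $E=C$ is a block of $\rho'$, then Lemma~\ref{lem:marked} with $r=s-1$ gives $\binom{m-f-c-1}{s-3}p_C^{c-1}\cdot p_C(1-|p_C|)(n-p_D-p_C)^{m-f-c-s+2}$.
\end{itemize}
Multiplying the three parts and summing over $f$, $D$, $b$, $B$ and $j$ gives \eqref{eq:Phi-subsets}.

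\emph{Main obstacle.} The delicate step is justifying that the first-level count factorizes between $D$ and its complement. This works because the two-level weight $w(\pi)w(\eta;\mathcal A_\pi)$ is a product over final blocks, so the generating-function argument of Lemma~\ref{lem:refinement} applies verbatim on $I\setminus D$ with $D$ frozen. The remaining work is checking the degenerate endpoints against the stated conventions:
\begin{itemize}
\item $s=1$, where $I\setminus D=\emptyset$ and the empty partition has weight one;
\item $s=2$, where the marked-block term needs $\binom{-1}{-1}$;
\item $c=m-f$;
\item the summation ranges, namely $f\leq m-s+1$ and $j\leq\min(d,f-b)$.
\end{itemize}
I will verify each of these directly, which also confirms the remark that a nonzero disjoint contribution requires $s\geq2$ and $c\leq m-f$.
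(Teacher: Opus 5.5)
Your proposal is correct and follows essentially the same route as the paper. Like the paper, you expand both $\Omega$ factors, fix the marked first-level block $B$ and its final block $D$, and evaluate inside $D$ by the forest identity together with the final weight $p_D^j$. You then obtain $\binom{m-f-s+1}{d-j}$ outside $D$ from the binomial convolution of Lemma~\ref{lem:refinement}, and split the second marked factor into the cases $E=D$ (forest identity) and $E=C\subseteq I\setminus D$ (Lemma~\ref{lem:marked} with $r=s-1$), with the same endpoint checks the paper makes.
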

\begin{proof}
Let $\rho\in\Pi_s(I)$ be the final partition determined by $(\pi,\eta)$. Inside each $D\in\rho$, let $\xi_D\in\Pi_{j_D+1}(D)$ be the first-level refinement. Then
\begin{equation}\label{eq:Phi-weight}
w(\pi)w(\eta;\mathcal A_\pi)=\prod_{D\in\rho}p_D^{j_D}w(\xi_D),\qquad
\sum_Dj_D=d,\quad0\leq j_D\leq|D|-1.
\end{equation}
Expand the first marked factor as $\sum_{B\in\pi}p_B(1-|p_B|)$, fix its block $B$, and let $D$ be the final block containing it. Set $f=|D|$, $b=|B|$, $j=j_D$. Necessarily
\[
1\leq f\leq m-s+1,\qquad1\leq b\leq f,\qquad0\leq j\leq\min(d,f-b).
\]
The remaining first-level blocks in $D$ partition $D\backslash B$ into $j$ blocks. Hence
\begin{equation}\label{eq:Phi-local}
\sum_{\substack{\xi\in\Pi_{j+1}(D)\\B\in\xi}}w(\xi)
=p_B^{b-1}\binom{f-b-1}{j-1}(p_D-p_B)^{f-b-j}.
\end{equation}
This includes $B=D,j=0$. Multiplying by $p_D^j$ and by the marked factor gives all the corresponding factors of \eqref{eq:Phi-subsets}.

Outside $D$ there are $s-1$ final blocks. The sum of their capacities $|C|-1$ is $m-f-s+1$. The binomial convolution in Lemma~\ref{lem:refinement} sums their refinements of total $d-j$ and gives $\binom{m-f-s+1}{d-j}$.

The second marked factor is a sum over final blocks. If it marks $D$, the other $s-1$ blocks contribute
$\binom{m-f-1}{s-2}(n-p_D)^{m-f-s+1}$ by the forest identity. This is the first term in the square bracket. Otherwise the marked final block is $C\subseteq I\backslash D$, with $1\leq c=|C|\leq m-f-s+2$. Its marked weight is $p_C^c(1-|p_C|)$ and the other $s-2$ blocks contribute
$\binom{m-f-c-1}{s-3}(n-p_D-p_C)^{m-f-c-s+2}$.
These are the disjoint contributions. When $s=1$, only $D=I$ occurs and there is no disjoint term; when $s=2$ and $C=I\backslash D$, the remaining partition is empty. These cases agree with the stated conventions. Summing all choices proves the formula.
\end{proof}
Derivatives of annihilation operators follow from
$\bigl(\mathfrak a_{-n}^{(k)}(\gamma)\bigr)^\dagger=(-1)^{n+k}\mathfrak a_n^{(k)}(\gamma)$.

\subsection{Formulas for $g_{K_X}$ and $g_{K_X^2}$}
We determine the last two sums of \eqref{eq:G-expansion}, using the coefficients of $\mathfrak a_{-1}^{(k)}(\gamma)$ calculated already.
For a coefficient function define
\begin{equation}\label{eq:T}
(TF)(q_1,\ldots,q_s)=\sum_{i=1}^s(q_i+1)F(q_1,\ldots,q_i+1,\ldots,q_s).
\end{equation}
The following identity deals with iterative action of $T$ on coefficient functions.
\begin{lemma}\label{lem:T-power}
For $d\geq0$,
\begin{equation}\label{eq:T-power}
(T^dF)(q_1,\ldots,q_s)=
\sum_{\substack{d_1,\ldots,d_s\geq0\\d_1+\cdots+d_s=d}}
\frac{d!}{d_1!\cdots d_s!}\prod_{i=1}^s(q_i+1)_{d_i}
F(q_1+d_1,\ldots,q_s+d_s),
\end{equation}
where $(x)_r=x(x+1)\cdots(x+r-1)$ and $(x)_0=1$. Each $d_i\leq d$.
\end{lemma}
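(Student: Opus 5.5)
The plan is induction on $d$, with $s$ and $F$ arbitrary. For $d=0$ the only composition is $d_1=\cdots=d_s=0$; the multinomial coefficient and every Pochhammer factor equal one, so the right side of \eqref{eq:T-power} is $F(q_1,\ldots,q_s)$. For $d=1$ the compositions are the unit vectors $e_i$, and since $(q_i+1)_1=q_i+1$ the right side is exactly \eqref{eq:T}.

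For the induction step, assume \eqref{eq:T-power} holds for $d$ and write $T^{d+1}F=T^d(TF)$, so that the hypothesis is applied to the function $G=TF$. Substituting \eqref{eq:T} for $G(q_1+d_1,\ldots,q_s+d_s)$ turns this into a double sum over a composition $(d_1,\ldots,d_s)$ of $d$ and an index $i$. Each term has the shifted argument $q+d+e_i$ and carries the factor
\[
\frac{d!}{d_1!\cdots d_s!}\prod_{l}(q_l+1)_{d_l}\,(q_i+d_i+1).
\]
The key observation is the Pochhammer recursion $(q_i+1)_{d_i}(q_i+d_i+1)=(q_i+1)_{d_i+1}$. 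Using it, set $d'=d+e_i$, which is a composition of $d+1$ with $d'_i\ge1$. The factor then becomes
\[
\frac{d!\,d'_i}{d'_1!\cdots d'_s!}\prod_l(q_l+1)_{d'_l}\,F(q+d').
\]

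Now regroup by $d'$. Each composition $d'$ of $d+1$ arises once for every $i$ with $d'_i\ge1$, and we may include the indices with $d'_i=0$ as well, since their terms vanish. Summing the weights gives $\sum_i d'_i=d+1$, so the coefficient is $\frac{(d+1)!}{d'_1!\cdots d'_s!}$. This is the formula for $d+1$. The bound $d_i\le d$ is then immediate from $\sum_i d_i=d$ with all $d_i\ge0$.

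The only delicate point is this regrouping step, which is the standard multinomial Pascal rule $\binom{d+1}{d'}=\sum_i\binom{d}{d'-e_i}$; the Pochhammer shift makes the $q$-dependent factors match automatically. An alternative view is to note that $T$ is a sum of commuting operators $T_i$ acting on distinct variables, with $T_i^{r}F=(q_i+1)_r F(\ldots,q_i+r,\ldots)$. The multinomial theorem for commuting operators then gives \eqref{eq:T-power} directly. I would mention this view as a check, but write the induction.
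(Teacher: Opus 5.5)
Your induction is correct. Applying the hypothesis to $G=TF$, the Pochhammer step $(q_i+1)_{d_i}(q_i+d_i+1)=(q_i+1)_{d_i+1}$ turns $\frac{d!}{d_1!\cdots d_s!}$ into $\frac{d!\,d'_i}{d'_1!\cdots d'_s!}$. Summing over $i$ then gives the factor $\sum_i d'_i=d+1$. Including the indices with $d'_i=0$ is harmless, since their weight is zero.

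The paper's proof is the route you mention only as a check. It writes $T=T_1+\cdots+T_s$ with $(T_iF)(q_1,\ldots,q_s)=(q_i+1)F(q_1,\ldots,q_i+1,\ldots,q_s)$. It notes that $T_iT_j=T_jT_i$ for $i\neq j$, because the factor $q_j+1$ is unaffected by shifting $q_i$. It then records $T_i^rF=(q_i+1)_rF(q_1,\ldots,q_i+r,\ldots,q_s)$ and expands $T^d$ by the multinomial theorem.

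The two arguments prove the same identity, packaged differently. Your induction is in effect an inline proof of the multinomial theorem for these particular operators. The Pochhammer recursion does the work of the one-variable power formula. The Pascal rule $\frac{(d+1)!}{d'_1!\cdots d'_s!}=\sum_i d'_i\,\frac{d!}{d'_1!\cdots d'_s!}$ replaces the appeal to commutativity.

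The paper's version is shorter and makes the coordinatewise factorization transparent, but leaves commutativity and the one-variable formula as quick unwritten checks. Yours is fully explicit and needs no operator-algebra input, at the price of the regrouping bookkeeping.
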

\begin{proof}
The individual coordinate-shift operators commute. Let
\[
(T_iF)(q_1,\ldots,q_s):=(q_i+1)F(q_1,\ldots,q_i+1,\ldots,q_s),
\]
then $T_iT_j = T_jT_i$ for $i\neq j$ and $T = T_1 + \cdots + T_s$.
Their powers supply the rising factorials:
\[
(T_i^dF)(q_1,\ldots,q_s)=(q_i+1)_dF(q_1,\ldots,q_i+d,\ldots,q_s).
\]
Thus expanding $T^d$ by the multinomial theorem proves the lemma.
\end{proof}
For $\epsilon\in\{K_X,K_X^2\}$, we would like to establish a relation between $f_{\epsilon,\nu}^{(k)}$ and $g_{\epsilon,\nu\sqcup \{-|\nu|\}}^{(k)}$ to obtain explicit formulas for Chern character operators. The notation $f_{\epsilon,\,\cdot\,\sqcup\{j\}}^{(k)}$ is the coefficient function 
\[
f_{\epsilon,\,\cdot\,\sqcup\{j\}}^{(k)}(q_1,\ldots,q_s) = f_{\epsilon,(q_1,\ldots,q_s,j)}^{(k)}.
\]
For clearness, we write $F_{k,\epsilon} = f_{\epsilon,\cdot}^{(k)}$ and $F^{[j]}_{k,\epsilon} = f_{\epsilon,\,\cdot\,\sqcup\{j\}}^{(k)}$.

\begin{theorem}\label{thm:reconstruction}
Let $\lambda$ be a generalized partition with $|\lambda|=0$. Choose a positive part $n$ and set $\mu=\lambda\backslash\{n\}$. For the term involving $K_X$, take $k\geq1$, $\ell(\lambda)=k+1$; for $K_X^2$, take $k\geq2$, $\ell(\lambda)=k$. Then
\begin{equation}\label{eq:g-reconstruction}
\begin{split}
g_{\epsilon,\lambda}^{(k)}=-\frac1{k!n!}\Bigg(&(-1)^{n-1}(T^{n-1}F_{k,\epsilon})(\mu)\\
&+\sum_{j=1}^{n-1}(-1)^{n-1-j}j!
\bigl(T^{n-1-j}(F^{[j]}_{k+1,\epsilon})\bigr)(\mu)\Bigg).
\end{split}
\end{equation}
All sums are finite after applying \eqref{eq:T-power}.
\end{theorem}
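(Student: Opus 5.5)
Starting from the fundamental identity \eqref{eq:G-a}, $[\mathfrak G_k(\gamma),\mathfrak a_{-1}(1_X)]=\frac1{k!}\mathfrak a_{-1}^{(k)}(\gamma)$, I would read off the coefficients $g^{(k)}_{\epsilon,\lambda}$ by comparing the expansions \eqref{eq:G-expansion} and \eqref{eq:derivative-expansion} (with $n=1$). The commutator of $\mathfrak a_{-1}(1_X)$ with a normally ordered monomial $\mathfrak a_\lambda(\tau_*(\epsilon\gamma))$ only removes a part equal to $1$ (via \eqref{eq:Heisenberg}), producing $\mathfrak a_{\lambda\setminus\{1\}}$ with a sign/multiplicity factor. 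So this comparison determines $g^{(k)}_{\epsilon,\lambda}$ directly only when $\lambda$ contains a part $1$, and for a general positive part $n$ it must be transported down to the part $1$.

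For that transport, I would use the mode-shift relation \eqref{eq:mode-shift}: $\mathfrak a_{-1}^{(1)}(1_X)=-\mathfrak L_{-1}(1_X)$ and $[-\mathfrak L_{-1}(1_X),\mathfrak a_m]=m\mathfrak a_{m-1}$. Write $\mathfrak G_k(\gamma)$ and commute it with the iterated operator $\mathfrak L_{-1}$ applied $n-1$ times, which lowers the chosen positive mode $n$ to $1$ and then is killed by $\mathfrak a_{-1}$. Concretely, I would compute
\[
\bigl[\,\cdots[[\mathfrak G_k(\gamma),\mathfrak L_{-1}(1_X)],\mathfrak L_{-1}(1_X)]\cdots,\mathfrak a_{-1}(1_X)\bigr]
\]
in two ways. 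On one side, $\operatorname{ad}\mathfrak L_{-1}$ acting on $\mathfrak a_\lambda$ shifts each part $q\mapsto q-1$ with factor $-q$ (sign conventions to be fixed); on the coefficient functions, indexed by the negated parts, this is precisely the operator $T$ of \eqref{eq:T}. On the other side, using Jacobi together with $-\mathfrak L_{-1}(1_X)=[\mathfrak d,\mathfrak a_{-1}(1_X)]$, and \eqref{eq:G-a} applied to $\mathfrak G_{k}$ and $\mathfrak G_{k+1}$ (note $[\mathfrak G_k,\mathfrak d]=0$ since cup products commute), the nested commutators expand into terms of the form $\operatorname{ad}(\mathfrak L_{-1})^{n-1-j}$ applied to products involving $\mathfrak a_{-1}^{(k)}$ or $\mathfrak a_{-1}^{(k+1)}$ with $j$ extra creation modes removed. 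This produces a term $T^{n-1}F_{k,\epsilon}$ and, for each $1\le j\le n-1$, a term $T^{n-1-j}F^{[j]}_{k+1,\epsilon}$ with factor $j!$ from repeatedly contracting a mode $-j$ against shifted annihilators. The alternating signs $(-1)^{n-1-j}$ come from the adjoint-shift signs, and the prefactor $-1/(k!n!)$ arises from the $1/k!$ in \eqref{eq:G-a}, the multiplicity of the part $n$ under $1/\lambda!$, and the factor $n!$ accumulated by lowering $n$ to $1$. Finally, Lemma~\ref{lem:T-power} makes all sums finite.

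The main obstacle is the bookkeeping in this second expansion, namely isolating exactly the $K_X$- and $K_X^2$-coefficients. Commutators of $\mathfrak L_{-1}$ can generate contraction terms carrying $e_X$ or extra $K_X$ factors, and these must be shown to land in other degrees or to be already accounted for by the $F^{[j]}_{k+1}$ terms. I would first check the formula on $\mathfrak G_1=\mathfrak d$ via \eqref{eq:G1}, and on $\mathfrak G_2$, to fix the signs and normalization before attempting the general induction on $n$.
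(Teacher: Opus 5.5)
\textbf{There is a genuine gap.} With your nesting order the shift operator $T$ acts on the \emph{unknown} coefficients, so your identity never isolates $g^{(k)}_{\epsilon,\lambda}$.

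Since $\operatorname{ad}\mathfrak L_{-1}(1_X)$ is a derivation, it lowers every part of every monomial of $\mathfrak G_k(\gamma)$, not only the chosen part $n$. On your first side, the coefficient of $\mathfrak a_\nu/\nu!$ is therefore $-(T^{n-1}g^{(k)}_{\epsilon})(\nu\sqcup\{1\})$. By Lemma~\ref{lem:T-power} this is a weighted sum of $g^{(k)}_{\epsilon,\lambda'}$ over all $\lambda'$ obtained by raising parts of $\nu\sqcup\{1\}$, and $g^{(k)}_{\epsilon,\mu\sqcup\{n\}}$ is only one summand.

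Your second side also does not have the shape you predict. By Jacobi and $[\mathfrak d,\mathfrak G_k(\gamma)]=0$, already after one step $[\mathfrak G_k(\gamma),\mathfrak L_{-1}(1_X)]=-\tfrac1{k!}\mathfrak a_{-1}^{(k+1)}(\gamma)$. So for $n\geq2$ your commutator is $-\tfrac1{k!}$ times $n-2$ further shifts of $\mathfrak a_{-1}^{(k+1)}(\gamma)$, followed by removal of a part $1$. No $F_{k,\epsilon}$ term appears, no mode $-j$ is ever contracted, and no factor $j!$ arises.

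What your computation actually yields is $T^{n-2}$ applied to the one-step relation $Tg^{(k)}_\epsilon=-F_{k+1,\epsilon}/k!$ (this is \eqref{eq:direct-reconstruction-shift}). Together with the boundary values $g^{(k)}_{\epsilon,\nu\sqcup\{1\}}=-F_{k,\epsilon}(\nu)/k!$ from \eqref{eq:G-a}, this relation determines $g^{(k)}_\epsilon$. But turning it into \eqref{eq:g-reconstruction} requires inverting $T$, and your plan does not address that step.

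\textbf{The missing idea is to nest the other way round}, so that the Heisenberg relation strips off exactly the chosen part. The paper sets $C_{k,n}(\gamma)=[\mathfrak G_k(\gamma),\mathfrak a_{-n}(1_X)]$. By \eqref{eq:remove-positive}, its $\mathfrak a_\mu/\mu!$-coefficient is exactly $-n\,g^{(k)}_{\epsilon,\mu\sqcup\{n\}}$. Write $\mathfrak a_{-n-1}(1_X)=-\tfrac1n[\mathfrak a_{-1}'(1_X),\mathfrak a_{-n}(1_X)]$, which follows from \eqref{eq:mode-shift}. Applying Jacobi with $[\mathfrak G_k(\gamma),\mathfrak a'_{-1}(1_X)]=\mathfrak a_{-1}^{(k+1)}(\gamma)/k!$ gives \eqref{eq:Ckn}, i.e.\ $c_{k,n+1}=F^{[n]}_{k+1,\epsilon}/k!-Tc_{k,n}/n$ with $c_{k,1}=F_{k,\epsilon}/k!$.

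In this recursion $T$ acts on the already computed $c_{k,n}$. The terms $F^{[j]}_{k+1,\epsilon}$ enter through $[\mathfrak a_{-1}^{(k+1)}(\gamma),\mathfrak a_{-j}(1_X)]$, and the factors $j!$ and $n!$ come from unrolling the $1/n$'s. Solving this first-order recursion gives \eqref{eq:g-reconstruction} directly.

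This recursion is exactly your one-step relation evaluated at $\mu\sqcup\{n\}$, with the raising of the chosen part $n$ separated from the shifts of $\mu$. So your route can be repaired this way.

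Finally, the obstacle you anticipated does not occur. By \eqref{eq:mode-shift}, $\operatorname{ad}\mathfrak L_{-1}(1_X)$ only shifts the modes of a normally ordered monomial, and a mode $1$ goes to $\mathfrak a_0=0$. Hence no $e_X$ or extra $K_X$ contraction terms are created.
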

\begin{proof}
Put $C_{k,n}(\gamma)=[\mathfrak G_k(\gamma),\mathfrak a_{-n}(1_X)]$.
The initial condition is $C_{k,1}=\mathfrak a_{-1}^{(k)}/k!$ by \eqref{eq:G-a}. Applying Jacobi identity to \eqref{eq:mode-shift} gives
\begin{equation}\label{eq:Ckn}
C_{k,n+1}(\gamma)=-\frac1n\left(\frac1{k!}[\mathfrak a_{-1}^{(k+1)}(\gamma),\mathfrak a_{-n}(1_X)]
+[\mathfrak a_{-1}^{(1)}(1_X),C_{k,n}(\gamma)]\right).
\end{equation}
Here $[\mathfrak G_k(\gamma),\mathfrak a_{-1}'(1_X)]=\mathfrak a_{-1}^{(k+1)}(\gamma)/k!$ follows by differentiating \eqref{eq:G-a}. The Heisenberg relation \eqref{eq:Heisenberg} gives
\begin{equation}\label{eq:remove-positive}
\left[\frac{\mathfrak a_\lambda(\tau_*(\epsilon\gamma))}{\lambda!},\mathfrak a_{-n}(1_X)\right]
=-n\frac{\mathfrak a_{\lambda\backslash\{n\}}(\tau_*(\epsilon\gamma))}{(\lambda\backslash\{n\})!}
\end{equation}
when $n$ occurs in $\lambda$; otherwise the result is zero. The multiplicity of $n$ is cancelled by $\lambda!$.

Let $c_{k,n,\epsilon}(\mu)$ be the coefficient of $\mathfrak{a}_{\mu}(\tau_*(\epsilon\gamma))/{\mu!}$ in $C_{k,n}$. Notice that $\lambda = \mu\sqcup\{n\}$. It is direct that 
\(
c_{k,1,\epsilon}=F_{k,\epsilon}/k!
\)
and $g_{\epsilon,\lambda}^{(k)}=-c_{k,n,\epsilon}(\mu)/n$. Consider a generalized partition $\nu$. Define $\nu^{(r)} = \nu\backslash \{r\}\sqcup\{r-1\}$ and $m_r(\nu)$ is the multiplicity of $r$ in $\nu$. By \eqref{eq:mode-shift}, 
\[
[\mathfrak a_{-1}'(1_X), \frac{\mathfrak a_\nu(\tau_*(\epsilon\gamma))}{\nu!}]= \sum_{\substack{
r\in\mathbb Z\backslash\{0,1\}\\
m_r(\nu)>0}}rm_{r-1}(\nu^{(r)})\frac{\mathfrak a_{\nu^{(r)}}(\tau_*(\epsilon\gamma))}{\nu^{(r)}!}.
\]
Hence
\[
\Coeff_{\mathfrak a_{\mu}/\mu!}[\mathfrak a_{-1}'(1_X),C_{k,n}(\gamma)] = \sum_{\substack{q\in\mathbb Z\\ m_q(\mu)>0}}(q+1)m_q(\mu)c_{k,n,\epsilon}(\mu\backslash\{q\}\sqcup\{q+1\}) = Tc_{k,n,\epsilon}(\mu).
\]
By \eqref{eq:Ckn}
\[
\begin{aligned}
c_{k,n+1,\epsilon}(\mu) 
&= -\frac{1}{k!n}\Coeff_{\mathfrak a_{\mu}/\mu!}[\mathfrak a_{-1}^{(k+1)}(\gamma),\mathfrak a_{-n}(1_X)] - \frac1n\Coeff_{\mathfrak a_{\mu}/\mu!}[\mathfrak a_{-1}'(1_X),C_{k,n}(\gamma)]
\\
&= \frac{1}{k!}f_{\epsilon,\mu\sqcup\{n\}}^{(k+1)} - Tc_{k,n,\epsilon}/n
=
F^{[n]}_{k+1,\epsilon}/k!-Tc_{k,n,\epsilon}/n.
\end{aligned}
\]
Solving this recursion gives the right side of \eqref{eq:g-reconstruction} multiplied by $-n$. Every nonempty zero-total partition has a positive part.

If \(n\) and \(m\) both occur in positive part of \(\lambda\), then
\[
[C_{k,n},\mathfrak a_{-m}]
=
[C_{k,m},\mathfrak a_{-n}]
\]by Jacobi identity and the commutativity of the creation operators. Comparing the coefficient of
\(\lambda\backslash\{n,m\}\) gives
\[
m\,c_{k,n,\epsilon}(\lambda\backslash\{n\})
=
n\,c_{k,m,\epsilon}(\lambda\backslash\{m\}).
\]Hence
\[
-\frac1n c_{k,n,\epsilon}(\lambda\backslash\{n\})
\]is independent of the chosen positive integer. Since the recurrence above gives its explicit finite expression, the right-hand side of \eqref{eq:g-reconstruction} is likewise independent of that choice.
\end{proof}

Below is an example of using Theorem~\ref{thm:reconstruction} to compute the coefficients of the second Chern character operator. This recovers the same formula in \cite[Proposition 5.6]{Tang}.
\subsection{The second Chern character operator}
The recursions at total weight one give
\begin{equation}\label{eq:small-f}
\fk{2}{(-(j+1),j)}=j,\quad
\fkk{3}{(-(j+1),j)}=j^2\quad(j\geq1),\quad
\fk{1}{(-1)}=\fkk{2}{(-1)}=0.
\end{equation}
Taking $\lambda=(-m,m)$ in Theorem~\ref{thm:reconstruction} yields
\begin{equation}\label{eq:small-g}
\gk{1}{(-m,m)}=-\frac1m\sum_{j=1}^{m-1}j=-\frac{m-1}{2},\qquad
\gkk{2}{(-m,m)}=-\frac1{2m}\sum_{j=1}^{m-1}j^2=-\frac{(m-1)(2m-1)}{12}.
\end{equation}
For $m>1$, $\mathfrak a_{-m}\mathfrak a_m$ commutes with $\mathfrak a_{-1}$, so the second coefficient illustrates what is recovered beyond the length-one commutator.

\begin{proposition}\label{prop:G2}
With $|\lambda^+|=\sum_{q>0}q m_q(\lambda)$, one has
\begin{align}
\mathfrak G_2(\gamma)={}&-\sum_{\substack{\ell(\lambda)=4\\|\lambda|=0}}
\frac{\mathfrak a_\lambda(\tau_*\gamma)}{\lambda!}
-\frac12\sum_{\substack{\ell(\lambda)=3\\|\lambda|=0}}(|\lambda^+|-1)
\frac{\mathfrak a_\lambda(\tau_*(K_X\gamma))}{\lambda!}\notag\\
&+\sum_{m\geq1}\frac{m^2-1}{12}\mathfrak a_{-m}\mathfrak a_m(\tau_{2*}(e_X\gamma))
-\sum_{m\geq1}\frac{(m-1)(2m-1)}{12}\mathfrak a_{-m}\mathfrak a_m(\tau_{2*}(K_X^2\gamma)).
\label{eq:G2}
\end{align}
\end{proposition}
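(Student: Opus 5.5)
Specialize the expansion \eqref{eq:G-expansion} to $k=2$. The first term of \eqref{eq:G2} is the universal $\ell(\lambda)=4$ term. For the $e_X$ term, the generalized partitions with $\ell(\lambda)=2$ and $|\lambda|=0$ are exactly $\lambda=(-m,m)$ with $m\ge1$, with $\lambda!=1$ and $s(\lambda)=2m^2$. Hence $(s(\lambda)-2)/24=(m^2-1)/12$, which gives the third term. By the same enumeration, the $K_X^2$ term is $\sum_{m\ge1}\gkk{2}{(-m,m)}\mathfrak a_{-m}\mathfrak a_m(\tau_{2*}(K_X^2\gamma))$. Its coefficient is already computed in \eqref{eq:small-g} from Theorem~\ref{thm:reconstruction}, using \eqref{eq:small-f}. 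For that step I only need to check the small values in \eqref{eq:small-f}:
\begin{itemize}
\item $\fk{2}{(-(j+1),j)}=j$ comes from \eqref{eq:rec-K} with $k=1$. Here $\mathfrak U f^{(1)}_{K_X}$ on $\{j+1,-j\}$ gives $1\cdot\frac{1\cdot0}{2}=0$. Also $-\frac12\Omega\cdot1=-\frac12\bigl((j+1)(-j)+(-j)(1-j)\bigr)=j$.
\item $\fkk{3}{(-(j+1),j)}=j^2$ follows in the same way from \eqref{eq:rec-K2}, using $f^{(2)}_{K_X^2}(\{1\})=0$.
\end{itemize}

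The main content is the $K_X$ term. I must show that $\gk{2}{\lambda}=-\frac12(|\lambda^+|-1)$ for every $\lambda$ with $\ell(\lambda)=3$ and $|\lambda|=0$. Such $\lambda$ is of one of two shapes: $(-a,-b,a+b)$ with one positive part, or $(-(a+b),a,b)$ with two positive parts. By the independence statement at the end of Theorem~\ref{thm:reconstruction}, I may pick any positive part $n$.

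\textbf{Case one positive part.} Take $n=a+b$ and $\mu=(-a,-b)$. Then \eqref{eq:g-reconstruction} with $k=2$ needs $T^{n-1}F_{2,K_X}$ evaluated at $\mu$, and $T^{n-1-j}F^{[j]}_{3,K_X}$ evaluated at $\mu$.

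\textbf{Case two positive parts.} Take $n=a$ and $\mu=(-(a+b),b)$.

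In both cases I need $f^{(2)}_{K_X}$ and $f^{(3)}_{K_X}$ on families of size $2$ and $3$. These are available in closed form from Theorem~\ref{thm:fK}, or more simply from \eqref{eq:rec-K} directly:
\begin{itemize}
\item $f^{(2)}_{K_X}(p_1,p_2)=\frac{n(n-1)}{2}\cdot n-\frac12\bigl(p_1(1-|p_1|)+p_2(1-|p_2|)\bigr)n$, where $n=p_1+p_2$.
\item $f^{(3)}_{K_X}$ is obtained by one further merge plus $-\Omega\cdot n^2$.
\end{itemize}
Then I expand the $T$-powers with Lemma~\ref{lem:T-power}. This turns each term into a finite sum of rising factorials times these explicit polynomials with sign terms $|p|$.

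The main obstacle is collapsing these alternating sums into the simple value $-\frac12(|\lambda^+|-1)$. My first attempt is to avoid the general formula and argue by induction on $n$ through the recursion $c_{k,n+1}=F^{[n]}_{k+1}/k!-Tc_{k,n}/n$. I would guess the closed form $c_{2,n,K_X}(\mu)=\frac n2(|\mu^+|+n-1)$, since $g=-c/n$. I would then verify that it satisfies the recursion, treating separately the terms where $T$ shifts a negative part to $0$. This needs care because $\mathfrak a_0=0$, so those terms drop. As a cross-check, I would compare with \eqref{eq:G1} through $\mathfrak G_2$'s commutator with $\mathfrak a_{-1}$ for small parts, and with \cite[Proposition 5.6]{Tang}.
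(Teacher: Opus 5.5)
For the $e_X$ and $K_X^2$ sums you do the same thing as the paper, and your checks of \eqref{eq:small-f} are correct. For the $K_X$ sum you take a different route. You only sketched the verification, but it does go through.

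Your ansatz $c_{2,n,K_X}(\mu)=\frac n2(|\mu^+|+n-1)$ holds at $n=1$ by \eqref{eq:small-f}. The induction through $c_{2,n+1}=F^{[n]}_{3,K_X}/2-Tc_{2,n}/n$ closes once you record two values that \eqref{eq:rec-K} gives: $\fk{3}{(-u,-v,j)}=3j-1$ for $u+v=j+1$, and $\fk{3}{(-u,v,j)}=3(v+j)$ for $u=v+j+1$.
\begin{itemize}
\item For $\mu=(-a,-b)$ one finds $Tc_{2,n}(\mu)=(1-n)\frac{n(n-1)}2$, hence $c_{2,n+1}(\mu)=\frac{3n-1}2+\frac{(n-1)^2}2=\frac{n(n+1)}2$.
\item For $\mu=(-(n+1+b),b)$ one finds $Tc_{2,n}(\mu)=\frac n2(n+b)(2-n)$, hence $c_{2,n+1}(\mu)=\frac{(n+1)(n+b)}2$.
\end{itemize}
Shifts of $-1$ to $0$ carry the factor $0$ automatically, so they need no separate treatment.

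The paper instead evaluates the solved formula \eqref{eq:g-reconstruction} only for $\lambda=(-a,-b,n)$. There the obstacle you feared does not arise. The first term vanishes because $(-a+1)_{d_1}(-b+1)_{d_2}=0$ whenever $d_1+d_2=n-1$. The surviving $f^{(3)}_{K_X}$ values are the constant $3j-1$. The Vandermonde identity for rising factorials then collapses $T^{n-1-j}$ to $(3j-1)(2-n)_{n-1-j}$, giving $-\frac{(n-2)!}{2n!}\sum_j j(3j-1)=-\frac{n-1}2$. The shape $(-(a+b),a,b)$ is not computed but obtained from graded self-adjointness of $\mathfrak G_2$.

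What each route buys:
\begin{itemize}
\item The paper needs only one family of $f^{(3)}$ values and no ansatz, but relies on the reflection argument.
\item Your induction treats both shapes uniformly and avoids both the alternating sums and the adjoint. It yields the whole $K_X$-part of $C_{2,n}$, so independence of the chosen positive part is automatic. The cost is the extra value $3(v+j)$ and having to guess the closed form in advance.
\end{itemize}
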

\begin{proof}
The first and third sums are already given by \eqref{eq:G-expansion}; the last is \eqref{eq:small-g}. For the second, write $\lambda=(-a,-b,n)$ with $a,b\geq1$ and $n=a+b$. The recursions give $\fk{3}{(-u,-v,j)}=3j-1$ whenever $u,v\geq1$, $u+v=j+1$.
In \eqref{eq:g-reconstruction}, the first term is zero because $n-1$ shifts of the two negative modes must pass through zero. For $s=n-1-j$,
\[
\bigl(T^s(f_{K_X,\,\cdot\,\sqcup\{j\}}^{(3)})\bigr)(-a,-b)
=(3j-1)\sum_{d_1+d_2=s}\binom{s}{d_1}(-a+1)_{d_1}(-b+1)_{d_2}
=(3j-1)(-n+2)_s.
\]
It follows that
\[
\gk{2}{(-a,-b,n)}=-\frac{(n-2)!}{2n!}\sum_{j=1}^{n-1}j(3j-1)=-\frac{n-1}{2}.
\]
The reflected triple has the same coefficient, because $\mathfrak G_2$ is graded self-adjoint and reflection of a total-mode-zero single-diagonal monomial has no overall sign. In both cases $n=|\lambda^+|$.
\end{proof}

\section{Affine Yangian actions}\label{sec:currents}
In this section, we establish an action of the affine
Yangian $Y_{t_1,t_2}(\widehat{\mathfrak{gl}}_1)$ on the
cohomological Fock space by verifying its current relations.
A related action was constructed by Pfammatter \cite{Pf}
using a different geometric approach.

Our construction is also closely related to the deformed
$W_{1+\infty}$-algebras $W^{(\mathbf c)}(X)$ introduced by
Mellit, Minets, Schiffmann, and Vasserot \cite{MMSV}.
They identify the positive half of these algebras with the
cohomological Hall algebra of zero-dimensional sheaves
and construct a level-one action on the cohomology of
Hilbert schemes of points
\cite[Theorem~B and Section~7.3]{MMSV}.
The connection with the Li--Qin--Wang operators is
discussed in \cite[Remark~3.23]{MMSV}.

Our proof is independent of these constructions.
We use $E_k$ and $F_k$, defined through higher boundary
derivatives of Nakajima operators in the Li--Qin--Wang
convention, and express the Cartan current in terms of
the Chern polynomial of the universal ideal sheaf.
The verification relies on the explicit coefficients
obtained in Section~\ref{sec:recursions}.

\begin{theorem}\label{thm:main}\label{thm:independent-action}
Let $X$ be a smooth projective complex surface. On
$\HH=\bigoplus_{n\geq0}H^*(X^{[n]})$, consider the currents
$E_n(z)$, $F_n(w)$, $\mathscr C_n(z)$, and
$H_n(z)=1+K_X\mathscr C_n(z)$ defined by
\eqref{eq:intro-EF}--\eqref{eq:intro-kappa}.
Let $\sigma_X$ interchange the two factors of $X\times X$, and put
$K_i=\operatorname{pr}_i^*K_X$. The kernels are
\begin{align}
\zeta(s)&=1+\frac{[\Delta_X]}{s(s+K_1)},\label{eq:zeta}\\
\widetilde\zeta_\pm(s)&=\zeta(s)(s\pm K_1)(s\mp K_2).
\label{eq:tilde-zeta}
\end{align}
Products are evaluated on the indicated cohomology classes, as specified in
Subsection~\ref{subsec:current-conventions}.

For $\theta\in H^*(X\times X)$, the same-sign relations are
\begin{equation}\label{eq:main-EE}
\begin{split}
&\bigl[(E_{n+1}(z)E_n(w))
(\theta\cup\widetilde\zeta_-(w-z))\bigr]_{z<0,w<0}\\
&\qquad=\bigl[(E_{n+1}(w)E_n(z))
\bigl(\sigma_X^*(\theta\cup\widetilde\zeta_+(z-w))\bigr)
\bigr]_{z<0,w<0},\qquad n\geq0,
\end{split}
\end{equation}
\begin{equation}\label{eq:main-FF}
\begin{split}
&\bigl[(F_{n-1}(w)F_n(z))
\bigl(\sigma_X^*(\theta\cup\widetilde\zeta_-(w-z))\bigr)
\bigr]_{z<0,w<0}\\
&\qquad=\bigl[(F_{n-1}(z)F_n(w))
(\theta\cup\widetilde\zeta_+(z-w))\bigr]_{z<0,w<0},
\qquad n\geq2.
\end{split}
\end{equation}
In these two formulas, multiply by $z-w$ before projecting to
strictly negative powers of both variables. The pullback $\sigma_X^*$
acts on both the kernel and the cohomology class $\theta$.

The Cartan exchange relations are
\begin{equation}\label{eq:main-HE}
\begin{split}
&(H_{n+1}(z)E_n(w))(\theta)\\
&\quad=\left[(E_n(w)H_n(z))
\left(\sigma_X^*\left(\theta\cup
\frac{\zeta(z-w)}{\zeta(w-z)}\right)\right)
\right]_{z\gg w,\,z^{\leq0},\,w^{<0}},\qquad n\geq0,
\end{split}
\end{equation}
\begin{equation}\label{eq:main-FH}
\begin{split}
&(F_n(w)H_n(z))(\sigma_X^*\theta)\\
&\quad=\left[(H_{n-1}(z)F_n(w))
\left(\theta\cup\frac{\zeta(z-w)}{\zeta(w-z)}\right)
\right]_{z\gg w,\,z^{\leq0},\,w^{<0}},\qquad n\geq1.
\end{split}
\end{equation}
Here the kernels are expanded for $z\gg w$ before the indicated
Laurent projection. The coefficients of the Cartan series
$H_n$ and $\mathscr C_n$ commute with themselves. For homogeneous
$\alpha,\beta\in H^*(X)$, the mixed relation is
\begin{equation}\label{eq:main-EF}
\restr{[E(z;\alpha),F(w;\beta)]}
=\frac{\mathscr C_n(z;\alpha\beta)-\mathscr C_n(w;\alpha\beta)}{z-w},
\qquad n\geq0.
\end{equation}
The current $\mathscr C_n$ also satisfies \eqref{eq:divided-HE} and
its lowering counterpart. The operators $E_a$ and $F_b$ satisfy the
graded cubic Serre relations \eqref{eq:Serre}.

Together, these identities define the action of the affine Yangian
presentation specified in Subsection~\ref{sec:action}, under
$t\mapsto K_X$ and $\varepsilon\mapsto e_X$, by
\begin{equation}\label{eq:Phi-action}
\begin{aligned}
e_k&\longmapsto E_k,& f_k&\longmapsto F_k,\\
c_k&\longmapsto\bigl(v\otimes\gamma\longmapsto
\pi_{1*}(\kappa_{k,n}\cup\pi_2^*\gamma)\cup v\bigr)_{n\geq0}.
\end{aligned}
\end{equation}
The images lie in $\Hom(\HH\otimes H^*(X),\HH)$.
Products and divided commutators act by the small-diagonal product
and the reduced bracket in \eqref{eq:star-red}, respectively.
No characteristic class is inverted.
\end{theorem}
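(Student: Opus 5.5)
Since $E_k$ and $F_k$ are iterated boundary derivatives of $\mathfrak a_{\mp1}$, I would reduce every relation in Theorem~\ref{thm:main} to commutator identities among Nakajima operators, $\mathfrak d$, and the cup-product operators $\mathfrak G_k$. The first step, and the linchpin, is the key identity \eqref{eq:intro-goal}: $[E_k(\gamma),\mathfrak a_1(1_X)]$ acts on $H^*(X^{[n]})$ as cup product with $\pi_{1*}(\kappa_{k,n}\cup\pi_2^*\gamma)$. To prove it, I would write $E_k(\gamma)=\mathfrak a_{-1}^{(k)}(\gamma)$ using the expansion \eqref{eq:derivative-expansion}. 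Commuting with $\mathfrak a_1(1_X)$ by \eqref{eq:Heisenberg} removes one part equal to $-1$ and yields a zero-mode operator. I would then expand the Chern polynomial ratio in \eqref{eq:intro-kappa} via Grothendieck--Riemann--Roch into a polynomial in the $\Gamma_{j,n}$ with coefficients in $K_X$, $K_X^2$, $e_X$. Substituting \eqref{eq:G-expansion}, with $g_{K_X}$, $g_{K_X^2}$ given by Theorem~\ref{thm:reconstruction} and Theorems~\ref{thm:fK}, \ref{thm:fK2}, turns the claim into a coefficient comparison. The $K$-independent and $e_X$ parts follow from \cite{LQW}. The $K_X$ and $K_X^2$ parts should reduce, through the merge and forest lemmas, to the two combinatorial identities of Appendix~\ref{app:coefficient-identities}. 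I expect this coefficient matching, especially the $K_X^2$ term built from $\Phi_{r,d}$, to be the main obstacle. I would attack it with generating functions in $z$, summing over $k$ first so that the factorials in \eqref{eq:fK2-solution} become exponentials.

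With \eqref{eq:intro-goal} in hand, I would obtain the mixed relation \eqref{eq:main-EF} by induction on the total degree $a+b$ of $[E_a,F_b]$. The base case $b=0$ is \eqref{eq:intro-goal} applied to $\mathfrak a_1(\beta)$, with $\alpha\beta$ moved onto the diagonal. For the induction step I apply $\ad\mathfrak d$: the Jacobi identity gives $[E_{a+1},F_b]+[E_a,F_{b+1}]$ up to sign as $[\mathfrak d,[E_a,F_b]]$, and cup-product operators commute with $\mathfrak d$. So $[E_a,F_b]$ depends only on $a+b$, which is exactly the divided-difference form. The Cartan exchange relations \eqref{eq:main-HE} and \eqref{eq:main-FH} come from the action of $\mathfrak a_{-1}$ on tautological classes. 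Adding a point multiplies the total Chern class of $\mathcal I_{\mathcal Z}$ by a universal factor supported on the diagonal, which comes from the incidence correspondence $Q^{[n+1,n]}$ as in Lehn's computation. Evaluating that factor with the $u\mapsto K_X$ shift gives $\zeta(z-w)/\zeta(w-z)$ after pulling back along $\Delta_X$. The divided version \eqref{eq:divided-HE} I would get by expanding in $K_X$ without inverting anything.

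For the quadratic $EE$ relations \eqref{eq:main-EE}, I would clear denominators so that both sides become polynomial in $z,w$ times $z-w$. Comparing coefficients of $z^{-a-1}w^{-b-1}$ then reduces the relation to a finite identity among $[E_{a+i},E_{b+j}]$ with small shifts. I would prove this by applying $\ad\mathfrak d$ to $[\mathfrak a_{-1},\mathfrak a_{-1}]=0$, using Lehn's formula \eqref{eq:Lehn} for $\mathfrak d$. Its cubic part produces the $[\Delta_X]$ term, and the $K_X$ part produces the $s\pm K_i$ factors. The $FF$ relations \eqref{eq:main-FF} follow by the graded adjoint \eqref{eq:graded-adjoint}, since $F_k=-(-\ad\mathfrak d)^k\mathfrak a_1$ is adjoint to $E_k$ up to sign. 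For the cubic Serre relations \eqref{eq:Serre}, I would use the standard reduction: given the quadratic relations and the fact that $\mathfrak a_{-1}$ modes commute, the symmetrized triple commutator $\sum_{\mathrm{Sym}}[E_{a_1},[E_{a_2},E_{a_3+1}]]$ is determined by its lowest case, which vanishes. Finally, \eqref{eq:Phi-action} assembles these relations into the presentation of Subsection~\ref{sec:action}.
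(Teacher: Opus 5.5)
Your plan for \eqref{eq:intro-goal} and for the mixed relation is essentially the paper's: expand $\mathfrak a_{-1}^{(d)}$, apply Proposition~\ref{prop:two-identities}, then Proposition~\ref{prop:kappa} with Lemma~\ref{lem:products}, and induct with $\ad\mathfrak d$. The paper proves the two identities through the permutation moments $S_1,S_2$ of Proposition~\ref{prop:direct-four-formulas} and never needs $\Phi_{r,d}$. Your geometric derivation of \eqref{eq:main-HE} from the relation between $\mathcal I_{\mathcal Z_{n+1}}$ and $\mathcal I_{\mathcal Z_n}$ on $Q^{[n+1,n]}$ is a viable alternative to the paper's algebraic route (Proposition~\ref{prop:Chern-shifts}, the formula for $\log H_n$, exponentiation). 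It works provided you use Lehn's description of $\mathfrak a_{-1}^{(k)}$ as that correspondence weighted by $\ell^k$, where $\ell$ is $c_1$ of the line bundle with fibres $\mathcal I_\eta/\mathcal I_\xi$, so that $w$ records $\ell$. It then reproduces the kernel in \eqref{eq:formal-shift-HE}.

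The genuine gap is the quadratic relation \eqref{eq:main-EE}. Applying $(\ad\mathfrak d)^N$ to $[\mathfrak a_{-1}(\alpha),\mathfrak a_{-1}(\beta)]=0$ only gives $\sum_j\binom{N}{j}[E_j(\alpha),E_{N-j}(\beta)]=0$. That is a formal consequence of the derivation rule and carries no information about the cubic shift structure. Moreover, the mode identity $Q_{a,b}(\gamma)=0$ of \eqref{eq:EE-mode} is not a relation among commutators alone, because it contains $(E_aE_b)_\Delta(K_X\gamma)+(E_bE_a)_\Delta(K_X\gamma)$. The $(K_1-K_2)s^2$, $K_1K_2s$ and $[\Delta_X]s$ parts of the cleared kernels reduce to $R_{a,b}$-terms only once you know the transfer property $[E_a(\alpha),E_b(\beta)]=R_{a,b}(\alpha\beta)$ of Lemma~\ref{lem:EE-transfer} and its consequence \eqref{eq:diag-difference}. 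Your plan never establishes these.

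The paper closes this with two ingredients you are missing.
\begin{itemize}
\item An explicit check of $Q_{0,0}(\gamma)=0$. This needs $E_2$, $R_{1,2}$ and the complete $\mathfrak a_{-2}^{(2)}$, including its $\mathfrak a_{-1}^2(\tau_{2*}(K_X\gamma))$ and $\mathfrak a_{-2}((K_X^2-e_X)\gamma)$ terms.
\item Shifts acting on one index at a time. The operator $\partial_r=\ad(r!\mathfrak G_r(1_X))$ satisfies $\partial_rQ_{a,b}=Q_{a+r,b}+Q_{a,b+r}$ by Proposition~\ref{prop:Chern-shifts}. Together with $Q_{b,a}=Q_{a,b}$ this gives $Q_{r,0}=0$, and then $Q_{a,b}=0$.
\end{itemize}
With $\ad\mathfrak d=\partial_1$ alone you reach only combinations such as $Q_{2,0}+Q_{1,1}$ and cannot separate them.

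Your Serre step has the same defect. ``Determined by its lowest case'' needs a mechanism that raises the three indices independently. As written, it also leans on the unproved quadratic relations; a single derivation gives only one combination per total degree unless you supply a Levendorskii-type argument. The paper does not use the quadratic relations there. It applies the polarized operator \eqref{eq:Serre-polarization}, built from the commuting derivations $r!\ad\mathfrak G_r(\eta)$, to $[E_0(1_X),[E_0(1_X),E_1(1_X)]]=0$, and handles odd classes with supercommuting parameters.

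Finally, assembling the Yangian action also rests on the transfer property, since that is what gives $A*B-B*A=e_X[A,B]_{\mathrm{red}}$ without dividing by $e_X$.
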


We prove the theorem in the following order. Subsection~\ref{subsec:current-conventions}
specifies the conventions for cohomological insertions and computes the Cartan coefficients.
Subsections~\ref{sec:mixed} and~\ref{sec:exchange} prove the mixed
commutator and the Cartan exchange relations. The same-sign quadratic
relations and the Serre relations are proved in
Subsections~\ref{sec:same-sign} and~\ref{sec:Serre}.
Finally, Subsection~\ref{sec:action} verifies that these identities give
the stated affine Yangian action.

\subsection{Preliminaries}\label{subsec:current-conventions}
We first specify the notation used in Theorem~\ref{thm:main} and record the
relation with Chern character operators needed in its proof.

\subsubsection{Cohomological insertions and source sectors}
We regard $E_k$ and $F_\ell$ as linear maps
\[
E_k,F_\ell:\HH\otimes H^*(X)\longrightarrow\HH,
\qquad E_k(v\otimes\gamma)=E_k(\gamma)v,\quad
F_\ell(v\otimes\gamma)=F_\ell(\gamma)v,
\]
where $E_k(\gamma)$ and $F_\ell(\gamma)$ are defined by \eqref{eq:intro-EF}. Their currents are
\[
E_n(z)=\sum_{k\geq0}E_{k,n}z^{-k-1},\qquad
F_n(w)=\sum_{\ell\geq0}F_{\ell,n}w^{-\ell-1}.
\]
For $\gamma\in H^*(X)$, we write $E_n(z;\gamma)$ and $F_n(w;\gamma)$, and omit $n$ for the corresponding maps on $\HH$. This is the same convention as for $\mathfrak a_m(\gamma)$ and $\mathfrak G_k(\gamma)$. In a fixed source sector,
\begin{equation}\label{eq:EF-sectors}
\begin{aligned}
E_{k,n}&:H^*(X^{[n]})\otimes H^*(X)\longrightarrow H^*(X^{[n+1]}),\\
F_{\ell,n}&:H^*(X^{[n]})\otimes H^*(X)\longrightarrow H^*(X^{[n-1]}),
\end{aligned}
\end{equation}
where $F_{\ell,0}=0$. For fixed $\gamma$, the operators $E_{k,n}(\gamma)$ and $F_{\ell,n}(\gamma)$ act between the indicated cohomology groups. They have parity $\bar\gamma$. Commutation with $\mathfrak d$ is understood as
$[\mathfrak d,T](v\otimes\gamma)=\mathfrak dT(v\otimes\gamma)-T(\mathfrak dv\otimes\gamma)$.

For an even class $B_n$ on $X^{[n]}\times X$ and $\gamma\in H^*(X)$, define
\begin{equation}\label{eq:left-insertion}
B_n(\gamma)v
=\pi_{1*}(B_n\cup\pi_2^*\gamma)\cup v.
\end{equation}
We use this notation for class-valued Laurent series as well, writing $B_n(z;\gamma)$ for the resulting operator. Powers, logarithms, and inverses of a class-valued series are formed \emph{before} evaluation on a cohomology class. For example, $(H_n(z)^{-1})(\gamma)$ means evaluating the formal inverse $H_n(z)^{-1}$ on $\gamma$; it is not the inverse of $H_n(z;\gamma)$.

To write products without repeatedly expanding K\"unneth decompositions, let $A,B$ denote $H^*(X)$-parametrized operators with compatible sectors, and set
\[
(AB)(\theta)=\sum_\ell A(\theta_{\ell,1})B(\theta_{\ell,2}),
\qquad \theta=\sum_\ell\theta_{\ell,1}\otimes\theta_{\ell,2}\in H^*(X\times X).
\]
Class-valued factors are evaluated using \eqref{eq:left-insertion}. Let $\sigma_X:X\times X\to X\times X$ interchange the factors. Thus
\[
\sigma_X^*(\alpha\otimes\beta)=(-1)^{\bar\alpha\bar\beta}\beta\otimes\alpha,
\qquad
[A(\alpha),B(\beta)]=(AB)(\alpha\otimes\beta)-(BA)(\sigma_X^*(\alpha\otimes\beta)).
\]
In a product such as $(\mathbf1\,E_n(w))(\theta)$, the factor $\mathbf1$ is the map $\gamma\mapsto(\int_X\gamma)\Id$. A kernel in $H^*(X\times X)$ acts by cup product on $\theta$ before evaluation. The same convention extends to any finite number of factors. In particular, reversing the order of operators requires interchanging their cohomology classes, as recorded by $\sigma_X^*$.

Brackets are formed on the direct sum and then restricted. In particular,
\begin{align}
\restr{[E_k(\alpha),F_\ell(\beta)]}
={}&E_{k,n-1}(\alpha)F_{\ell,n}(\beta)\notag\\
&-(-1)^{\bar\alpha\bar\beta}F_{\ell,n+1}(\beta)E_{k,n}(\alpha).
\label{eq:mixed-sectors}
\end{align}
For every homogeneous $\gamma$, the low modes and adjoints are
\begin{equation}\label{eq:lowest-EF}
\begin{aligned}
E_0(\gamma)&=\mathfrak a_{-1}(\gamma),&E_1(\gamma)&=-\mathfrak L_{-1}(\gamma),\\
F_0(\gamma)&=-\mathfrak a_1(\gamma),&F_1(\gamma)&=\mathfrak L_1(\gamma),\\
E_k(\gamma)^\dagger&=F_k(\gamma),&
E_{k,n}(\gamma)^\dagger&=F_{k,n+1}(\gamma).
\end{aligned}
\end{equation}
The adjoint identities follow from \eqref{eq:graded-adjoint} and the even self-adjointness of $\mathfrak d$.

\subsubsection{Relations with Chern character operators}
\begin{proposition}\label{prop:Chern-shifts}
For homogeneous $\eta,\beta$ and $r,j\geq0$,
\begin{equation}\label{eq:Chern-shifts}
[r!\mathfrak G_r(\eta),E_j(\beta)]=E_{j+r}(\eta\beta),\qquad
[r!\mathfrak G_r(\eta),F_j(\beta)]=-F_{j+r}(\eta\beta).
\end{equation}
Moreover $[\mathfrak d,E_j(\gamma)]=E_{j+1}(\gamma)$ and $[\mathfrak d,F_j(\gamma)]=-F_{j+1}(\gamma)$.
\end{proposition}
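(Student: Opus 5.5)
The plan is to prove the statements for $\mathfrak d$ first, then bootstrap to all $r$ by induction on $j$, starting from the fundamental identity \eqref{eq:G-a}.

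\emph{The $\mathfrak d$ statements.} For $E$, the definition $E_j(\gamma)=(\ad\mathfrak d)^j\mathfrak a_{-1}(\gamma)$ gives $[\mathfrak d,E_j(\gamma)]=E_{j+1}(\gamma)$ immediately. For $F$, we have $F_j(\gamma)=-(-1)^j(\ad\mathfrak d)^j\mathfrak a_1(\gamma)$, so
\[
[\mathfrak d,F_j]=-(-1)^j(\ad\mathfrak d)^{j+1}\mathfrak a_1=-F_{j+1}.
\]
Since $\mathfrak G_1(1_X)=\mathfrak d$ by \eqref{eq:G-GRR}, these are the cases $r=1$, $\eta=1_X$ of \eqref{eq:Chern-shifts}.

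\emph{$E$-identity for general $r$.} I induct on $j$. For $j=0$, \eqref{eq:G-a} gives
\[
[r!\mathfrak G_r(\eta),\mathfrak a_{-1}(\beta)]=\mathfrak a_{-1}^{(r)}(\eta\beta)=E_r(\eta\beta).
\]
For the step, write $E_{j+1}(\beta)=[\mathfrak d,E_j(\beta)]$ and apply the graded Jacobi identity:
\[
[\mathfrak G_r(\eta),[\mathfrak d,E_j(\beta)]]=[[\mathfrak G_r(\eta),\mathfrak d],E_j(\beta)]+[\mathfrak d,[\mathfrak G_r(\eta),E_j(\beta)]].
\]
The first term vanishes because cup-product operators supercommute and $\mathfrak d$ is even. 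The sign bookkeeping is harmless: $\mathfrak d$ is even, and the parity of $\mathfrak G_r(\eta)$ is $\bar\eta$. By induction the second term equals
\[
[\mathfrak d,E_{j+r}(\eta\beta)]/r!=E_{j+r+1}(\eta\beta)/r!.
\]
For $r=0$, \eqref{eq:G-a} gives $[\mathfrak G_0(\eta),\mathfrak a_{-1}(\beta)]=\mathfrak a_{-1}(\eta\beta)$, and the same induction applies.

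\emph{$F$-identity.} The main point is the base case $[r!\mathfrak G_r(\eta),\mathfrak a_1(\beta)]=(-1)^r\mathfrak a_1^{(r)}(\eta\beta)$. There are two routes; I would use the first and keep the second as a cross-check.

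The first route is adjoints. Taking $\dagger$ of the $E$-identity with $j=0$ uses $[A,B]^\dagger=-[A^\dagger,B^\dagger]$, $E_k^\dagger=F_k$ from \eqref{eq:lowest-EF}, and the graded self-adjointness of cup product, $\mathfrak G_r(\eta)^\dagger=\mathfrak G_r(\eta)$. This last holds because $\langle\alpha\cup v,w\rangle=\pm\langle v,\alpha\cup w\rangle$ with exactly the sign built into the graded-adjoint convention. The result is
\[
-[r!\mathfrak G_r(\eta),F_0(\beta)]=F_r(\eta\beta)
\]
in general, and then the Jacobi induction gives $[r!\mathfrak G_r(\eta),F_j(\beta)]=-F_{j+r}(\eta\beta)$.

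The second route is to use $[\mathfrak d,F_j]=-F_{j+1}$ directly in the induction step, which produces the minus sign consistently, so only the base case needs the adjoint argument.

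The one delicate step is checking the sign of $\mathfrak G_r(\eta)^\dagger$ when $\eta$ is odd, together with the supercommutator signs in $[A,B]^\dagger$. I would verify this on a single sector: pair $\mathfrak G_r(\eta)\mathfrak a_1(\beta)v$ against $w$ and move each operator across using \eqref{eq:graded-adjoint}.
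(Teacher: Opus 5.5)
Your proof is correct and is essentially the paper's argument. You use the Jacobi identity together with $[\mathfrak d,\mathfrak G_r(\eta)]=0$ to propagate \eqref{eq:G-a} in $j$, and graded self-adjointness of cup products with $E_k^\dagger=F_k$ to get the $F$ identity; the paper simply takes the adjoint of the $E$ identity for all $j$ at once instead of inducting from $j=0$.

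One sign slip: your displayed base case should read $[r!\mathfrak G_r(\eta),\mathfrak a_1(\beta)]=(-1)^{r+1}\mathfrak a_1^{(r)}(\eta\beta)$. For example, at $r=0$, $\eta=1_X$ one has $\mathfrak G_0(1_X)=n\,\Id$, so the commutator is $-\mathfrak a_1(\beta)$. This corrected form is exactly what your adjoint identity $-[r!\mathfrak G_r(\eta),F_0(\beta)]=F_r(\eta\beta)$ yields, so your conclusion is unaffected.
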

\begin{proof}
Move $(\ad\mathfrak d)^j$ through \eqref{eq:G-a} by Jacobi identity, using $[\mathfrak d,\mathfrak G_r(\eta)]=0$. This proves the first formula. Cup multiplication is graded self-adjoint, so \eqref{eq:graded-adjoint} and \eqref{eq:lowest-EF} prove the second. The last statements follow from the definitions.
\end{proof}
For $k\geq2$, the relation to the $W$-algebra modes is
\begin{align}
E_k(\gamma)={}&-\mathfrak J_{-1}^k(\gamma)
+\sum_{\substack{\ell(\nu)=k\\|\nu|=-1}}\fk{k}{\nu}\frac{\mathfrak a_\nu(\tau_*(K_X\gamma))}{\nu!}
+\sum_{\substack{\ell(\nu)=k-1\\|\nu|=-1}}\fkk{k}{\nu}\frac{\mathfrak a_\nu(\tau_*(K_X^2\gamma))}{\nu!},\label{eq:E-J}\\
F_k(\gamma)={}&\mathfrak J_1^k(\gamma)
-\sum_{\substack{\ell(\nu)=k\\|\nu|=1}}\fk{k}{-\nu}\frac{\mathfrak a_\nu(\tau_*(K_X\gamma))}{\nu!}
-\sum_{\substack{\ell(\nu)=k-1\\|\nu|=1}}\fkk{k}{-\nu}\frac{\mathfrak a_\nu(\tau_*(K_X^2\gamma))}{\nu!}.
\label{eq:F-J}
\end{align}
The Euler class term is already included in $\mathfrak J$. In the same way, \eqref{eq:G-expansion} equals $\mathfrak J_0^{k+1}/(k+1)!$ plus its two displayed terms involving the canonical class.

\subsubsection{The Cartan coefficients}
For a vector bundle with formal Chern roots $x_i$, our signed Chern polynomial is
$c(V,z)=\prod_i(z-x_i)$. Extend this multiplicatively to classes of coherent
sheaves with locally free resolutions. On $X^{[n]}\times X$, recall

\begin{equation}\label{eq:Cartan-def}
\mathscr C_n(z)=\sum_{d\geq0}\kappa_{d,n}z^{-d-1}
=\left.\frac{c(\mathcal I_{\mathcal Z_n},z+u)/c(\mathcal I_{\mathcal Z_n},z)-1}{u}\right|_{u=K_X},
\qquad H_n(z)=1+K_X\mathscr C_n(z).
\end{equation}
These are class-valued series. For $\gamma\in H^*(X)$, the associated cup-product operators are
\[
\mathscr C_n(z;\gamma)=\pi_{1*}(\mathscr C_n(z)\cup\pi_2^*\gamma)\cup-,\qquad
H_n(z;\gamma)=\pi_{1*}(H_n(z)\cup\pi_2^*\gamma)\cup-.
\]
Thus $H_n(z;\gamma)=(\int_X\gamma)\Id+\mathscr C_n(z;K_X\gamma)$. The constant class $1$ gives the family $\gamma\mapsto(\int_X\gamma)\Id$. In operator products, $H_n(z)$ and $\mathscr C_n(z)$ are evaluated by this convention; they do not denote maps with target $H^*(X^{[n]}\times X)$.

The ideal sheaf has rank one and first Chern class zero, so its signed Chern polynomial is $z+O(z^{-1})$ and has a formal Laurent inverse. The numerator vanishes at $u=0$. Formal division by $u$ is performed in a polynomial or power-series ring where multiplication by $u$ is injective, before evaluating at class $K_X$. $H_n=1+K_X\mathscr C_n$ does not recover $\mathscr C_n$ by cancellation, thus $\mathscr C_n$ contains more information than $H_n$. This is why we define two Cartan series. To see this, at $K_X=0$,
\begin{equation}\label{eq:Kzero}
H_n(z)=1,\qquad \mathscr C_n(z)=\partial_z\log c(\mathcal I_{\mathcal Z_n},z).
\end{equation}

Put $P_n(z)=\log c(\mathcal I_{\mathcal Z_n},z)$. There are
\[
\begin{gathered}
\ch(\mathcal I_{\mathcal Z_n})=1-\ch(\mathcal O_{\mathcal Z_n}),
\qquad
\ch(\mathcal O_{\mathcal Z_n})=\left(\sum_{k\ge0}\Gamma_{k,n}\right)\td(X)^{-1}
\end{gathered}
\] together with $\td(X)^{-1}=1+K_X/2+(2K_X^2-e_X)/12$. They imply
\begin{equation}\label{eq:P}
P_n(z)=\log z+\sum_{k\geq0}(k+1)!
\left(\Gamma_{k,n}+\frac{K_X}{2}\Gamma_{k-1,n}
+\frac{2K_X^2-e_X}{12}\Gamma_{k-2,n}\right)z^{-k-2}.
\end{equation}
The expansion follows from
$\log c(V,z)=\operatorname{rk}(V)\log z-\sum_{j\geq1}(j-1)!\ch_j(V)z^{-j}$.

\begin{proposition}\label{prop:kappa}
One has $\kappa_{0,n}=1$, $\kappa_{1,n}=0$, and, for $d\geq2$,
\begin{align}
\kappa_{d,n}={}&-d!\Gamma_{d-2,n}-(d-1)!K_X\Gamma_{d-3,n}
+\frac{d!}{12}(e_X-K_X^2)\Gamma_{d-4,n}\notag\\
&+\frac{K_X}{2}\sum_{\substack{i,j\geq0\\i+j=d-5}}
(i+2)!(j+2)!\,\Gamma_{i,n}\Gamma_{j,n}\notag\\
&+\frac{K_X^2}{2}\sum_{\substack{i,j\geq0\\i+j=d-6}}
(i+2)!(j+2)!\,\Gamma_{i,n}\Gamma_{j,n}\notag\\
&-\frac{K_X^2}{6}\sum_{\substack{i,j,k\geq0\\i+j+k=d-8}}
(i+2)!(j+2)!(k+2)!\,\Gamma_{i,n}\Gamma_{j,n}\Gamma_{k,n}.
\label{eq:kappa-explicit}
\end{align}
A negative upper bound in summation gives an empty sum.
\end{proposition}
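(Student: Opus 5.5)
The plan is to expand the defining formula \eqref{eq:Cartan-def} directly in powers of $u$, using the logarithmic expansion \eqref{eq:P}, and then truncate using degree vanishing on the surface. Write $P_n(z)=\log z+Q_n(z)$ with
\[
Q_n(z)=\sum_{k\geq0}(k+1)!\,A_k\,z^{-k-2},\qquad
A_k=\Gamma_{k,n}+\frac{K_X}{2}\Gamma_{k-1,n}+\frac{2K_X^2-e_X}{12}\Gamma_{k-2,n}.
\]
Then $c(\mathcal I_{\mathcal Z_n},z+u)/c(\mathcal I_{\mathcal Z_n},z)=\exp(D)$ with $D=P_n(z+u)-P_n(z)=\sum_{r\geq1}\frac{u^r}{r!}P_n^{(r)}(z)$. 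Since $D$ is divisible by $u$, the quotient $(e^D-1)/u$ is a genuine power series in $u$. I will then specialize $u=K_X$ and use that every monomial containing $K_X^3$, $K_Xe_X$ or $e_X^2$ vanishes, because these are pulled back from $H^{>4}(X)=0$.

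\textbf{Step 1: truncating the exponential.} Modulo $u^3$ I will write
\[
\frac{e^D-1}{u}\equiv P'+u\Bigl(\tfrac12P''+\tfrac12P'^2\Bigr)+u^2\Bigl(\tfrac16P'''+\tfrac12P'P''+\tfrac16P'^3\Bigr),
\]
where $P=P_n$. I then substitute $P'=z^{-1}+Q'$, $P''=-z^{-2}+Q''$ and $P'''=2z^{-3}+Q'''$.

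The pure powers of $z$ cancel in each bracket. In the $u$ bracket, $-\tfrac12z^{-2}+\tfrac12z^{-2}=0$. In the $u^2$ bracket, $\tfrac13-\tfrac12+\tfrac16=0$.

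\textbf{Step 2: collecting the surviving terms.} The remaining terms are as follows.
\begin{itemize}
\item The $u^0$ part is $z^{-1}+Q'$. This gives $\kappa_{0,n}=1$, $\kappa_{1,n}=0$, and the term $-(k+2)!A_k$ in $\kappa_{k+2,n}$.
\item The $u$ part is $\tfrac12Q''+z^{-1}Q'+\tfrac12Q'^2$. The linear piece is $\tfrac12Q''+z^{-1}Q'=\sum_k(k+1)!\tbinom{k+2}{2}A_kz^{-k-4}$.
\item The $u^2$ part has the linear piece $\tfrac16Q'''-\tfrac12z^{-1}Q''+\tfrac12z^{-2}Q'+\ldots$, together with quadratic and cubic terms in $Q'$ and $Q''$.
\end{itemize}

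\textbf{Step 3: inserting $A_k$ and pruning by degree.} After multiplying by $u=K_X$ or $u^2=K_X^2$, only the $\Gamma$-part of $A_k$ survives in the $u^2$ terms. In the $u$ terms, only $\Gamma_k+\tfrac{K_X}{2}\Gamma_{k-1}$ survives.

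Collecting the powers $z^{-d-1}$ should yield the following contributions:
\begin{itemize}
\item the single-$\Gamma$ terms $-d!\Gamma_{d-2}$, $-(d-1)!K_X\Gamma_{d-3}$ and $\tfrac{d!}{12}(e_X-K_X^2)\Gamma_{d-4}$;
\item the quadratic terms with prefactors $K_X/2$ and $K_X^2/2$;
\item the cubic term with prefactor $K_X^2/6$.
\end{itemize}
Here $Q'=-\sum(k+2)!A_kz^{-k-3}$ produces the factors $(i+2)!$.

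\textbf{Main obstacle.} The main difficulty is the coefficient bookkeeping in the $K_X^2$ sector. Three sources feed this sector: the linear $u^2$ terms, the $u$-term evaluated on the $K_X$-part of $A_k$, and the $u^0$ term evaluated on the $K_X^2$-part of $A_k$. They must combine to give exactly $-(d-1)!K_X\Gamma_{d-3}$ and $-\tfrac{d!}{12}K_X^2\Gamma_{d-4}$.

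Similarly, the quadratic $K_X^2$ terms come from $\tfrac12u\,Q'^2$ evaluated with one factor's $\tfrac{K_X}{2}$ correction, together with $u^2\bigl(\tfrac12Q'Q''+\tfrac12z^{-1}Q'^2\bigr)$. These must collapse to the single sum with index shift $d-6$.

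I would verify each of these identities by reading off the exponent of $z$ and simplifying the factorial prefactors. As a check, I would compare low $d$ against the $K_X=0$ specialization \eqref{eq:Kzero}.
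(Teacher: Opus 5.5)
Your route is the paper's own. You expand $(e^{P_n(z+u)-P_n(z)}-1)/u$ through $u^2$ (this is \eqref{eq:Taylor}), substitute \eqref{eq:P}, and discard every monomial containing $K_X^3$, $K_Xe_X$ or $e_X^2$. Your account of the quadratic $K_X^2$ sources is also the paper's key step. The cross term $\frac{K_X^2}{4}\sum_{i+j=d-6}(i+j+6)(i+2)!(j+2)!\,\Gamma_{i,n}\Gamma_{j,n}$ coming from $\frac{u}{2}Q'^2$ is cancelled by $\frac{u^2}{2}Q'Q''$, which leaves $\frac{u^2}{2}z^{-1}Q'^2$.

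However, one intermediate formula you display is wrong, and with it the bookkeeping would not close. The linear part of the $u^2$ bracket is $\frac16Q'''+\frac12z^{-1}Q''=-\sum_{k\geq0}\frac{(k+1)(k+3)!}{6}A_kz^{-k-5}$, not $\frac16Q'''-\frac12z^{-1}Q''+\frac12z^{-2}Q'+\cdots$. Two things go wrong in your version:
\begin{itemize}
\item the $z^{-2}Q'$ contributions of $\frac12P'P''$ and $\frac16P'^3$ cancel, so no $z^{-2}Q'$ term survives;
\item the term $z^{-1}Q''$ enters with a plus sign.
\end{itemize}
With the corrected expression, the three sources you list give the $K_X^2\Gamma_{d-4,n}$ coefficient $-\frac{d!}{6}+\frac{(d-2)(d-1)!}{4}-\frac{(d-3)(d-1)!}{6}=-\frac{d!}{12}$, as required. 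With your signs it becomes $-\frac{(d+12)(d-1)!}{12}-\frac{(d-2)!}{2}$; for example, this is $-9$ instead of $-2$ at $d=4$.

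The $K_X=0$ check you propose cannot detect this error, because it lives entirely in the $K_X^2$ sector. A direct check of $\kappa_{4,n}=-24\Gamma_{2,n}-6K_X\Gamma_{1,n}+2(e_X-K_X^2)\Gamma_{0,n}$ would catch it.
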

\begin{proof}
Taylor expansion through $u^3$, followed by division by $u$ and specialization, is the exact identity
\begin{equation}\label{eq:Taylor}
\mathscr C_n=P_n'+\frac{K_X}{2}\bigl(P_n''+(P_n')^2\bigr)
+\frac{K_X^2}{6}\bigl(P_n'''+3P_n'P_n''+(P_n')^3\bigr).
\end{equation}
Here primes denote differentiation with respect to the corresponding variables. Higher terms vanish by $K_X^3=0$. Substituting \eqref{eq:P} into \eqref{eq:Taylor}, the linear terms give the first line of \eqref{eq:kappa-explicit}. The two leading derivatives in $K_X(P_n')^2/2$ give the next line. At $i+j=d-6$, the cross term involving the $K_X$-part of $P_n'$ contributes
$K_X^2(i+j+6)(i+2)!(j+2)!\Gamma_{i,n}\Gamma_{j,n}/4$.
It cancels the contribution of $K_X^2P_n'P_n''/2$. The factor $1/z$ in the remaining cubic term gives the quadratic $K_X^2$ line. The final line arises from the cubic product of the terms involving \(\Gamma_{k,n}\) in \(P_n'(z)\), each of which carries a minus sign.

Since $P_n(z)=\log z+O(z^{-2})$, we have
$\mathscr C_n(z)=z^{-1}+O(z^{-3})$.
Thus $\kappa_{0,n}=1$ and $\kappa_{1,n}=0$.
\end{proof}

\begin{proposition}\label{prop:logH}
The logarithm of the undivided version of current $H_n(z)$ is
\begin{equation}\label{eq:logH}
\log H_n(z)=\log(1+K_X/z)-K_X\sum_{k\geq0}(k+2)!\,\Gamma_{k,n}z^{-k-3}.
\end{equation}
\end{proposition}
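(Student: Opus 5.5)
The undivided version of $H_n(z)$ is $1+u\,\mathscr C_n(z)$ before $u$ is specialized, that is,
\[
1+u\,\frac{c(\mathcal I_{\mathcal Z_n},z+u)/c(\mathcal I_{\mathcal Z_n},z)-1}{u}
=\frac{c(\mathcal I_{\mathcal Z_n},z+u)}{c(\mathcal I_{\mathcal Z_n},z)},
\]
evaluated at $u=K_X$. The plan is therefore to compute
\[
\log H_n(z)=P_n(z+K_X)-P_n(z)
\]
with $P_n=\log c(\mathcal I_{\mathcal Z_n},z)$ as in \eqref{eq:P}. Both sides are formal series in $z^{-1}$ with coefficients in a nilpotent-augmented ring, and the shift is expanded by Taylor series. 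Since $K_X^3=0$, we have
\[
P_n(z+K_X)-P_n(z)=K_XP_n'+\tfrac{K_X^2}{2}P_n''.
\]
For the $\log z$ part this is exactly $\log(1+K_X/z)$, so that term can be split off without expanding it.

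Write $P_n=\log z+Q_n$, where
\[
Q_n=\sum_{k\ge0}(k+1)!\,A_k z^{-k-2},\qquad
A_k=\Gamma_{k,n}+\tfrac{K_X}{2}\Gamma_{k-1,n}+\tfrac{2K_X^2-e_X}{12}\Gamma_{k-2,n}.
\]
Then
\[
Q_n(z+K_X)-Q_n(z)=K_XQ_n'+\tfrac{K_X^2}{2}Q_n''.
\]
When multiplied by $K_X$, the terms of $A_k$ containing $K_X^2$ or $e_X$ vanish: $K_X^3=0$, and $K_Xe_X$ has degree $6>4$ on $X$. So
\[
K_XQ_n'=-K_X\sum_k(k+2)!\bigl(\Gamma_{k,n}+\tfrac{K_X}{2}\Gamma_{k-1,n}\bigr)z^{-k-3},
\]
and
\[
\tfrac{K_X^2}{2}Q_n''=\tfrac{K_X^2}{2}\sum_k(k+3)!\,\Gamma_{k,n}z^{-k-4}.
\]

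The key step is the cancellation between these two pieces. After the index shift $k\mapsto k+1$, the $K_X^2$ part of $K_XQ_n'$ becomes $-\tfrac{K_X^2}{2}\sum_k(k+3)!\,\Gamma_{k,n}z^{-k-4}$. This exactly cancels $\tfrac{K_X^2}{2}Q_n''$, leaving $-K_X\sum_k(k+2)!\,\Gamma_{k,n}z^{-k-3}$ as claimed.

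The only delicate point is bookkeeping. One must check that $\Gamma_{-1,n}=0$, so the boundary term of the shifted sum vanishes, and confirm the degree vanishings $K_X^3=0$ and $K_Xe_X=0$ in $H^*(X)$ pulled back to $X^{[n]}\times X$. One must also justify that the logarithm of the undivided series is well defined: $c(\mathcal I,z+u)/c(\mathcal I,z)=1+O(u/z)$, so its logarithm is a formal power series in the nilpotent $u$.
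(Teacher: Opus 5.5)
Your proposal is correct and follows the paper's proof essentially step for step. You write $\log H_n=P_n(z+K_X)-P_n(z)=K_XP_n'+\tfrac{K_X^2}{2}P_n''$ and split off $\log(1+K_X/z)$ from the $\log z$ term. You then cancel the derivative of the $\tfrac{K_X}{2}\Gamma_{k-1,n}$ term against the second derivative of the leading $\Gamma_{k,n}$ term, and discard the rest by $K_X^3=K_Xe_X=0$. Your extra bookkeeping (identifying $H_n$ with $c(\mathcal I_{\mathcal Z_n},z+K_X)/c(\mathcal I_{\mathcal Z_n},z)$ and using $\Gamma_{-1,n}=0$ in the index shift) just makes explicit what the paper leaves implicit.
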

\begin{proof}
Use $\log H_n=P_n(z+K_X)-P_n(z)=K_XP_n'+K_X^2P_n''/2$. The derivative of the $K_X\Gamma_{k-1,n}/2$ term of \eqref{eq:P} is
$-K_X^2\sum_{k\geq0}(k+3)!\Gamma_{k,n}z^{-k-4}/2$,
which cancels the second derivative of the leading $\Gamma$ term. The remaining characteristic-class terms vanish for degree reasons. The term $\log z$ in $P_n(z)$ contributes $K_X/z-K_X^2/(2z^2)$.
\end{proof}

Since every coefficient of $H_n(z;\alpha)$ and $\mathscr C_n(w;\beta)$
acts by cup product, these coefficients supercommute. This proves the
Cartan commutativity assertion of Theorem~\ref{thm:main}.

\subsection{The \(E\)-\(F\) commutator}\label{sec:mixed}
We prove \eqref{eq:main-EF} by first identifying the commutator with
$F_0=-\mathfrak a_1$ and then obtaining all other lowering modes by
boundary differentiation. The coefficient identities used in the first
step are proved independently in Appendix~\ref{subsec:proof-two-identities}.

Let $\lambda=(q_i)_{i\in I}$ with $q_i\ne0$ and $\sum_iq_i=0$, and set $\lambda_B=(q_i)_{i\in B}$. In the two- and three-block sums below the conditions are
\begin{equation}\label{eq:zero-block-bounds}
|\lambda_B|=0,\qquad 2\leq|B|\leq|I|-2(r-1)\quad(B\in\pi),\qquad r=2,3.
\end{equation}
\begin{proposition}\label{prop:two-identities}
For $\ell(\lambda)=d-1$, $d\geq3$,
\begin{equation}\label{eq:identity-K}
\fk{d}{\lambda\sqcup\{-1\}}=-d!\gk{d-2}{\lambda}+(d-1)!
+\sum_{\substack{\pi\in\Pi_2(I)\\\text{conditions }\eqref{eq:zero-block-bounds}}}\prod_{B\in\pi}|B|!.
\end{equation}
For $\ell(\lambda)=d-2$, $d\geq4$,
\begin{align}
\fkk{d}{\lambda\sqcup\{-1\}}={}&-d!\gkk{d-2}{\lambda}-(d-1)!\gk{d-3}{\lambda}+\frac{d!}{12}\notag\\
&+\sum_{\substack{\pi\in\Pi_2(I)\\\text{conditions }\eqref{eq:zero-block-bounds}}}\prod_{B\in\pi}|B|!\notag\\
&-\sum_{\substack{\pi\in\Pi_2(I)\\\text{conditions }\eqref{eq:zero-block-bounds}}}
\sum_{C\in\pi}(|C|+1)!\gk{|C|-1}{\lambda_C}\prod_{B\in\pi\backslash\{C\}}|B|!\notag\\
&+\sum_{\substack{\pi\in\Pi_3(I)\\\text{conditions }\eqref{eq:zero-block-bounds}}}\prod_{B\in\pi}|B|!.
\label{eq:identity-K2}
\end{align}
\end{proposition}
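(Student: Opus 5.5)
The plan is to prove both identities of Proposition~\ref{prop:two-identities} as purely combinatorial statements. I would substitute the closed forms already obtained in Section~\ref{sec:recursions} on each side and compare.

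\textbf{Left side of \eqref{eq:identity-K}.} Put $\mathcal A=(-q_i)_{i\in I}\sqcup(1)$, with total weight $n=1$. Then \eqref{eq:fK-partitions} reads
\[
\fk{d}{\lambda\sqcup\{-1\}}=-\tfrac12\sum_{r}(d-r)!(r-1)!\sum_{\pi\in\Pi_r(I\sqcup\{\ast\})}w(\pi)\Omega(\mathcal A_\pi).
\]
Here $\ast$ denotes the label of the extra part $-1$ of $\lambda\sqcup\{-1\}$. A block $B$ of size at least two with $p_B=0$ has weight $p_B^{|B|-1}=0$, so it drops out. The block containing $\ast$ has $p_B=1+p_{B\setminus\ast}$, and I would organise the sum according to that block.

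\textbf{Right side.} For the coefficient $\gk{d-2}{\lambda}$ I would use Theorem~\ref{thm:reconstruction}, taking $n$ to be a convenient positive part of $\lambda$. Together with Lemma~\ref{lem:T-power}, this writes $g$ as a finite sum of rising factorials $(q_i+1)_{d_i}$ times values of $F_{d-2,K_X}$ and $F_{d-1,K_X}$ at shifted arguments. By Theorem~\ref{thm:fK}, those values are again weighted block sums.

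\textbf{Organisation of the comparison.} I would not compare the two sides monomial by monomial. Instead I would introduce generating functions: multiply by $\prod_i x_i^{q_i}$, or equivalently use exponential generating functions in the block sizes. Then:
\begin{itemize}
\item Lemma~\ref{lem:forest} and Lemma~\ref{lem:marked} turn each side into a product over blocks of a single-block series.
\item Lemma~\ref{lem:refinement} handles the nested sums that arise because the $F^{[j]}_{k+1}$ terms are merged further.
\end{itemize}
In this language the correction terms have clear meanings. The term $(d-1)!$ is the contribution of the single block $\{\ast\}$, for which $\Omega(\{1\})=0$ but which carries the marking. The sum $\sum_{\pi\in\Pi_2}\prod|B|!$ is the quadratic contribution in which $\lambda$ splits into two zero-total blocks, each weighted by $|B|!$. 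These terms should match the quadratic $K_X(i+2)!(j+2)!\Gamma_i\Gamma_j/2$ line of Proposition~\ref{prop:kappa}, and I would use that line as a guide for the bookkeeping.

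\textbf{The $K_X^2$ identity.} For \eqref{eq:identity-K2} I would follow the same route, using Theorem~\ref{thm:fK2} together with the subset form in Proposition~\ref{prop:Phi}. The additional terms should correspond exactly to the remaining terms of \eqref{eq:kappa-explicit}:
\begin{itemize}
\item the $-(d-1)!\,\gk{d-3}{}$ term to the $K_X\Gamma_{d-3}$ summand;
\item the constant $d!/12$ to the $-K_X^2\Gamma_{d-4}$ part;
\item the $\Pi_2$ and $\Pi_3$ sums to the quadratic and cubic $K_X^2$ lines;
\item the mixed term $(|C|+1)!\,\gk{|C|-1}{\lambda_C}$ to the product of a $K_X$-coefficient of one $\Gamma$ with the leading part of the other.
\end{itemize}
I would verify each correspondence by splitting the partitions of $I\sqcup\{\ast\}$ according to how many zero-total sub-blocks the block containing $\ast$ separates off. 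A natural alternative is induction on $d$ through the recursions \eqref{eq:rec-K} and \eqref{eq:rec-K2}: show that the right-hand side satisfies the same merge recursion in the variables $\lambda\sqcup\{-1\}$, and check the base cases $d=3$ and $d=4$ by hand.

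\textbf{Expected obstacle.} The hardest step will be evaluating the rising-factorial sums produced by $T^{n-1-j}$ in Theorem~\ref{thm:reconstruction}. They mix positive and negative parts: shifting a negative part through zero produces vanishing factors, as already seen in the proof of Proposition~\ref{prop:G2}. I would first try to collapse these sums with a Vandermonde-type convolution, $\sum\binom{s}{d_1}(a)_{d_1}(b)_{d_2}=(a+b)_s$, applied block by block. After that, the independence of the choice of $n$ proved in Theorem~\ref{thm:reconstruction} lets me choose $n$ to be a part lying in the block that contains the extra part $-1$, which should reduce the sum to the marked-block formula of Lemma~\ref{lem:marked}.
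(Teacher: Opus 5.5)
Your proposal is an outline rather than a proof. The step you postpone as the ``expected obstacle'' is where all the content lies, so as it stands there is a genuine gap.

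\textbf{What is missing or would fail.}
\begin{itemize}
\item You never obtain a usable expression for $g^{(d-2)}_{K_X,\lambda}$ or $g^{(d-2)}_{K_X^2,\lambda}$, and the reduction you suggest for them does not work. The coefficient $g^{(d-2)}_{K_X,\lambda}$ is indexed by $\lambda$ alone, which does not contain the label $\ast$, so on that side there is no ``block containing the extra part''.
\item The independence of $n$ in Theorem~\ref{thm:reconstruction} holds only for the whole coefficient. You cannot choose $n$ afresh for each summand after expanding $F_{d-2,K_X}$ and $F^{[j]}_{d-1,K_X}$ into partition sums.
\item More fundamentally, the identities are not polynomial in the parts. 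The terms $\sum_{\Pi_2}\prod|B|!$, $\sum_{\Pi_3}\prod|B|!$ and the marked $g_{K_X,\lambda_C}$-term live on the walls $p_B=0$. They must come from the factors $|p_B|$ and $\sgn(p_B)$ in $\Omega$, since, as you observe, zero-sum blocks are killed by $w(\pi)$. A generating function in $\prod_i x_i^{q_i}$ that factorizes over blocks gives no mechanism for producing these wall terms.
\item Your reading of $(d-1)!$ as the marked singleton $\{\ast\}$ fails, because that marking is $\Omega(\{1\})=0$.
\item The fallback induction through \eqref{eq:rec-K} and \eqref{eq:rec-K2} does not close. The operator $\mathfrak U$ also merges $\ast$ with other labels, producing total-weight-one families with no distinguished entry of weight $1$, and the identity says nothing about those. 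The right-hand side would also need a merge recursion for the $g$-coefficients, which Section~\ref{sec:recursions} does not supply.
\end{itemize}

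\textbf{The missing idea: a closed form that avoids the rising-factorial sums.} The paper sets $u(\sigma)=\sum_B (p_B)_+$ over the cycles $B$ of $\sigma\in\mathfrak S_I$, and $S_j=\sum_\sigma u(\sigma)^j$. It then shows:
\begin{itemize}
\item $f^{(k)}_{K_X}(\mathcal A)=S_1(\mathcal A)-k!$ at total weight one, from Corollary~\ref{cor:fK-polynomial} by pairing $B$ with $I\setminus B$;
\item $g^{(k-1)}_{K_X}(\mathcal A)=\tfrac12-S_1(\mathcal A)/k!$ at total weight zero;
\item analogous $S_2$ formulas for the $K_X^2$ coefficients.
\end{itemize}
The $g$-formulas are never computed from $T^{n-1-j}$. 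Instead they are checked against the two relations that determine $g$ uniquely: $g^{(k)}(\mathcal B\sqcup\{-1\})=-f^{(k)}(\mathcal B)/k!$ and $f^{(k+1)}(\mathcal A)=-k!\sum_i(1-p_i)g^{(k)}(\mathcal A^{(i)})$. The $K_X^2$ formula for $f$ is checked against \eqref{eq:rec-K2} using merge identities for $S_1$ and $S_2$.

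With these closed forms the identities become counting arguments.
\begin{itemize}
\item For \eqref{eq:identity-K}, insert a new weight-$1$ label into the cycles of $\sigma$. It raises $u$ by one exactly when it forms a singleton or enters a cycle of nonnegative sum. Cycles of sum exactly zero produce the $\Pi_2$ term.
\item For \eqref{eq:identity-K2}, insert the pair $\pm1$ and read off the first two moments. The marked term comes from applying the closed form of $g_{K_X}$ to the complement of a zero-sum cycle. The $\Pi_3$ term comes from counting pairs of positive and negative cycles.
\end{itemize}
Without a closed form of this kind, your plan has nothing concrete to compare.
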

Their proof is given in Appendix~\ref{subsec:proof-two-identities}. 

\begin{lemma}\label{lem:products}
For $r\geq1$, let $\tau_{r*}\eta=\sum_a\eta_{a,1}\otimes\cdots\otimes\eta_{a,r}$ in the graded K\"unneth convention. Then, for $i_1,\ldots,i_r\geq0$,
\begin{equation}\label{eq:Gamma-products}
\pi_{1*}(\Gamma_{i_1,n}\cdots\Gamma_{i_r,n}\cup\pi_2^*\eta)
=\sum_a\prod_{j=1}^rG_{i_j}(\eta_{a,j},n).
\end{equation}
The corresponding identity of multiplication operators follows by cup product.
\end{lemma}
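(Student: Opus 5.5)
The plan is to reduce \eqref{eq:Gamma-products} to a pushforward computation through the small diagonal. I will argue via the fiber product $X^{[n]}\times X^r$ rather than trying to expand the product class $\Gamma_{i_1,n}\cdots\Gamma_{i_r,n}$ directly.

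\emph{Setup.} Put $Y=X^{[n]}\times X$ and $Y_r=X^{[n]}\times X^r$, with projections $\rho:Y_r\to X^{[n]}$ and $q_j:Y_r\to X^{[n]}\times X$ onto the $j$th factor. Let $\delta=\Id\times\tau_r:Y\to Y_r$ be the relative small diagonal, so that $q_j\circ\delta=\Id_Y$ for every $j$ and $\rho\circ\delta=\pi_1$.

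\emph{Step 1.} Since $q_j\circ\delta=\Id_Y$, we have $\delta^*q_j^*\Gamma=\Gamma$ for any class $\Gamma$ on $Y$. Hence
\[
\Gamma_{i_1,n}\cdots\Gamma_{i_r,n}=\delta^*\bigl(q_1^*\Gamma_{i_1,n}\cup\cdots\cup q_r^*\Gamma_{i_r,n}\bigr).
\]

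\emph{Step 2.} Apply the projection formula for $\delta$:
\[
\pi_{1*}\bigl(\delta^*(\textstyle\prod_jq_j^*\Gamma_{i_j,n})\cup\pi_2^*\eta\bigr)
=\rho_*\bigl(\textstyle\prod_jq_j^*\Gamma_{i_j,n}\cup\delta_*\pi_2^*\eta\bigr).
\]
The class $\delta_*\pi_2^*\eta$ is the pullback of $\tau_{r*}\eta$ from $X^r$, because $\delta$ is the base change of $\tau_r$ along the flat projection $Y_r\to X^r$.

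\emph{Step 3.} Insert the Künneth decomposition $\tau_{r*}\eta=\sum_a\eta_{a,1}\otimes\cdots\otimes\eta_{a,r}$. Then $Y_r$ is the $r$-fold fiber product of $Y$ over $X^{[n]}$, and $\rho_*$ of an exterior product factors as the cup product of the individual pushforwards $\pi_{1*}$. This gives $\sum_a\prod_jG_{i_j}(\eta_{a,j},n)$, since by \eqref{eq:G-definition} each factor is $G_{i_j}(\eta_{a,j},n)=\pi_{1*}(\Gamma_{i_j,n}\cup\pi_2^*\eta_{a,j})$.

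\emph{Main obstacle: signs.} All classes $\Gamma_{i,n}$ are even, so reordering factors only produces signs among the $\eta_{a,j}$. The graded Künneth convention used to define $\tau_{r*}$ is exactly the one for which the factorization in Step 3 holds without extra signs. I expect the real work to be checking that this convention is consistent with the convention for $\mathfrak a_{n_1}\cdots\mathfrak a_{n_r}(\tau_{r*}\alpha)$ fixed earlier. The cleanest way is induction on $r$ via $\tau_{r}=(\tau_{r-1}\times\Id)\circ\tau_2$, which reduces everything to the case $r=2$; there the Künneth formula for $X^{[n]}\times X\times X$ can be checked directly.

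\emph{Operator version.} The corresponding identity of multiplication operators follows immediately, because cup-product operators compose as cup products of the classes.
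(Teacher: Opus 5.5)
Your proposal is correct and is essentially the paper's proof: both work on $X^{[n]}\times X^r$ with the projections $q_j$, use $(\mathrm{id}\times\tau_r)^*q_j^*\Gamma_{i_j,n}=\Gamma_{i_j,n}$ together with the projection formula for $\mathrm{id}\times\tau_r$, and factor the pushforward of a product of $q_j$-pullbacks by base change. The paper runs this same chain of equalities in the opposite direction, and it needs neither your comparison with the Nakajima-operator convention nor an induction on $r$, since all $\Gamma_{i,n}$ are even and the fibre of $\pi_1$ has even real dimension, so no extra signs arise.
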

\begin{proof}
Let $p:X^{[n]}\times X^r\to X^{[n]}$ and
$q_j:X^{[n]}\times X^r\to X^{[n]}\times X$ be the projections. Base change and the projection formula give
\[
\prod_{j=1}^rG_{i_j}(\eta_{a,j},n)
=p_*\left(\prod_{j=1}^r q_j^*(\Gamma_{i_j,n}\cup\pi_2^*\eta_{a,j})\right).
\]
After summing over $a$, apply the projection formula to $\operatorname{id}_{X^{[n]}}\times\tau_r$:
\begin{align*}
\sum_a\prod_{j=1}^rG_{i_j}(\eta_{a,j},n)
&=p_*\left(\prod_{j=1}^rq_j^*\Gamma_{i_j,n}\cup
(\operatorname{id}_{X^{[n]}}\times\tau_r)_*\pi_2^*\eta\right)\\
&=\pi_{1*}\left(\Gamma_{i_1,n}\cdots\Gamma_{i_r,n}\cup\pi_2^*\eta\right).
\end{align*}
Here $(\operatorname{id}\times\tau_r)^*q_j^*\Gamma_{i_j,n}=\Gamma_{i_j,n}$ for every $j$.
\end{proof}

\begin{theorem}\label{thm:Cartan-comparison}
For all $n,d\geq0$ and homogeneous $\gamma$,
\begin{equation}\label{eq:Cartan-comparison}
\restr{[E_d(\gamma),\mathfrak a_1(1_X)]}
=\pi_{1*}(\kappa_{d,n}\cup\pi_2^*\gamma)\cup-.
\end{equation}
\end{theorem}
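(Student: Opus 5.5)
We prove \eqref{eq:Cartan-comparison} by comparing coefficients of normally ordered Nakajima monomials on both sides.

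\textbf{Left side.} For $d=0,1$ we compute directly from \eqref{eq:lowest-EF}. The Heisenberg relation gives $[\mathfrak a_{-1}(\gamma),\mathfrak a_1(1_X)]=(\int_X\gamma)\Id$, which is $\pi_{1*}(\kappa_{0,n}\cup\pi_2^*\gamma)$ since $\kappa_{0,n}=1$. By \eqref{eq:La}, $[-\mathfrak L_{-1}(\gamma),\mathfrak a_1(1_X)]=\mathfrak a_0(\gamma)=0$, which matches $\kappa_{1,n}=0$.

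For $d\geq2$ we use \eqref{eq:E-J} to write $E_d(\gamma)$ as $-\mathfrak J^d_{-1}(\gamma)$ plus the $K_X$ and $K_X^2$ sums with coefficients $\fk{d}{\nu}$ and $\fkk{d}{\nu}$, where $|\nu|=-1$. Commuting with $\mathfrak a_1(1_X)$ removes one part $-1$ from $\nu$, in the manner of \eqref{eq:remove-positive}. The multiplicity is again cancelled by $\nu!$, and the factor is $+1$ because of the sign in \eqref{eq:Heisenberg}. The result is a sum over $\lambda=\nu\setminus\{-1\}$ with $|\lambda|=0$. The coefficient of $\mathfrak a_\lambda(\tau_*(\epsilon\gamma))/\lambda!$ is $f^{(d)}_{\epsilon,\lambda\sqcup\{-1\}}$ for $\epsilon=K_X,K_X^2$, and for the universal and $e_X$ parts it is the corresponding coefficient of $\mathfrak J^d_{-1}$ in \eqref{eq:J}.

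\textbf{Right side.} We expand $\kappa_{d,n}$ by Proposition~\ref{prop:kappa}. Lemma~\ref{lem:products} converts each product $\Gamma_{i_1,n}\cdots\Gamma_{i_r,n}$ paired with $\pi_2^*\gamma$ (times $K_X$ or $K_X^2$) into $\sum_a\prod_j\mathfrak G_{i_j}(\eta_{a,j})$ with $\eta=K_X^e\gamma$. We then insert \eqref{eq:G-expansion}, together with \eqref{eq:G0}, \eqref{eq:G1}, Proposition~\ref{prop:G2} and its $g$-coefficients for low indices. The resulting products of cup-product operators are rewritten in normal order by the diagonal reordering identity \cite[Lemma~3.2]{LQW}. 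A product of $r$ operators $\mathfrak a_{\lambda_{B}}(\tau_*\eta_{a,\cdot})$ with a common small diagonal becomes $\mathfrak a_\lambda(\tau_*\eta)$, and each such term is indexed by a partition $\pi\in\Pi_r(I)$ of the labels into zero-sum blocks. We must track the contraction terms. Each internal contraction inserts $e_X$, and any term carrying $K_X^3$, $K_Xe_X$ or higher vanishes for degree reasons, so only a few contraction corrections survive.

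\textbf{Matching coefficients.} We collect by the class multiplying $\gamma$.
\begin{itemize}
\item \emph{No $K_X$, no $e_X$ factor.} The term $-d!\Gamma_{d-2,n}$ gives $-d!$ times the leading coefficient $-1$ of $\mathfrak G_{d-2}$, which should match $-\mathfrak J^d_{-1}$ after removing a $-1$ part. We check the factor $d!$ against $p!$ in \eqref{eq:J}.
\item \emph{$e_X$ terms.} These should match the known Euler coefficients of \cite{LQW}. Alternatively, they follow from the case $K_X\gamma=0$ via \eqref{eq:G-J-Kzero}, since for such $\gamma$ everything is already in \cite{LQW}.
\item \emph{$K_X$ terms.} Sources are $-d!\,\gk{d-2}{\lambda}$ from the linear term, $-(d-1)!$ times the leading term of $\mathfrak G_{d-3}$, and the quadratic line of \eqref{eq:kappa-explicit}. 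Reordering the quadratic line gives $\sum_{\pi\in\Pi_2}\prod|B|!$, with block sizes $|B|=i+2$ and the ranges \eqref{eq:zero-block-bounds}. The total is exactly the right side of \eqref{eq:identity-K}.
\item \emph{$K_X^2$ terms.} Likewise, the contributions assemble into \eqref{eq:identity-K2}. The $\frac{d!}{12}$ term comes from $-\frac{d!}{12}K_X^2\Gamma_{d-4,n}$. The mixed $g_{K_X}$ terms come from the $K_X$-part of one factor in the quadratic term. The three-block sum comes from the cubic line.
\end{itemize}
With these identifications, Proposition~\ref{prop:two-identities} closes the proof.

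\textbf{Main obstacle.} The hard step is the bookkeeping in the reordering. We need to verify that the diagonal contractions in products of two or three $\mathfrak G$'s produce no extra surviving terms. In particular, the $e_X$-contractions in the $K_X\Gamma\Gamma$ products must be killed by $K_Xe_X=0$, and the $K_X$-contributions must be distributed exactly as the signed sum over $C\in\pi$ in \eqref{eq:identity-K2}. The cancellation already noted in the proof of Proposition~\ref{prop:kappa} must also be respected here. The small-$d$ cases where the $\mathfrak G_0,\mathfrak G_1$ special forms enter are checked separately: $d=2,3$ by hand using \eqref{eq:G0}, \eqref{eq:G1} and \eqref{eq:small-g}.
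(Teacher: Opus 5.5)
Your proposal is correct and follows essentially the same route as the paper. In both, you expand $[E_d(\gamma),\mathfrak a_1(1_X)]$ by deleting a $-1$ part, and expand $\kappa_{d,n}$ via Proposition~\ref{prop:kappa} and Lemma~\ref{lem:products} into products of $\mathfrak G$'s whose diagonal contractions vanish because $K_Xe_X=0$. You then convert ordered to unordered block sums, close with Proposition~\ref{prop:two-identities}, and treat $d\le 3$ by hand.

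The only soft spot is your $e_X$ bullet, which you can settle by a direct check rather than an appeal to \cite{LQW}. The term $-d!\,\frac{s(\lambda)-2}{24}$ coming from $-d!\,\mathfrak G_{d-2}(\gamma)$, plus the $-\frac{d!}{12}$ coming from the leading term of $\frac{d!}{12}\mathfrak G_{d-4}(e_X\gamma)$, gives exactly the left-hand coefficient $-\frac{d!}{24}s(\lambda)$.
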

\begin{proof}
Write $\mathcal C_d(\gamma)=[E_d(\gamma),\mathfrak a_1(1_X)]$.
We compare the two sides of \eqref{eq:Cartan-comparison} through their
normal-ordered expansions. All operators below are restricted to
$H^*(X^{[n]})$.

\smallskip
\noindent\emph{(1) Expansion of the left-hand side.}
Commuting a divided monomial with $\mathfrak a_1(1_X)$ removes one
occurrence of $-1$ with coefficient one. Applying this to the expansion
of $E_d(\gamma)=\mathfrak a_{-1}^{(d)}(\gamma)$ in
\eqref{eq:derivative-expansion} gives, for $d\geq2$,
\begin{align}
\mathcal C_d(\gamma)
={}&d!\sum_{\substack{\ell(\lambda)=d\\|\lambda|=0}}
\frac{\mathfrak a_\lambda(\tau_*\gamma)}{\lambda!}
-\frac{d!}{24}\sum_{\substack{\ell(\lambda)=d-2\\|\lambda|=0}}
s(\lambda)\frac{\mathfrak a_\lambda(\tau_*(e_X\gamma))}{\lambda!}
\notag\\
&+\sum_{\substack{\ell(\lambda)=d-1\\|\lambda|=0}}
\fk{d}{\lambda\sqcup\{-1\}}
\frac{\mathfrak a_\lambda(\tau_*(K_X\gamma))}{\lambda!}
\notag\\
&+\sum_{\substack{\ell(\lambda)=d-2\\|\lambda|=0}}
\fkk{d}{\lambda\sqcup\{-1\}}
\frac{\mathfrak a_\lambda(\tau_*(K_X^2\gamma))}{\lambda!}.
\label{eq:C-expansion}
\end{align}
Here the Euler coefficient uses
$s(\lambda\sqcup\{-1\})-1=s(\lambda)$.

Assume first that $d\geq4$. Substitute
\eqref{eq:identity-K}--\eqref{eq:identity-K2} from
Proposition~\ref{prop:two-identities} into \eqref{eq:C-expansion}.
By \eqref{eq:G-expansion}, with the low-degree formulas
\eqref{eq:G0}--\eqref{eq:G1}, the terms without block sums combine into
\begin{equation}\label{eq:C-linear}
-d!\mathfrak G_{d-2}(\gamma)
-(d-1)!\mathfrak G_{d-3}(K_X\gamma)
+\frac{d!}{12}\mathfrak G_{d-4}((e_X-K_X^2)\gamma).
\end{equation}
Expression~\eqref{eq:C-linear} is also exactly the contribution of the
first line of \eqref{eq:kappa-explicit} to the right-hand side of
\eqref{eq:Cartan-comparison}. We therefore subtract this common
expression from both sides and compare the remaining terms.

For each $\lambda=(q_i)_{i\in I}$ below, put $m=\ell(\lambda)=|I|$.
The notation \[\sum^*_{\pi\in\Pi_r(I)}\] means that every block $B\in\pi$ satisfies
\[
|\lambda_B|=0,\qquad
2\leq |B|\leq m-2(r-1),\qquad r=2,3,
\]
as in \eqref{eq:zero-block-bounds}. Sums over $\Pi_r(I)$ are unordered;
sums written as $B_1\sqcup\cdots\sqcup B_r=I$ are over ordered
block decompositions.

After subtracting \eqref{eq:C-linear}, the left-hand side is
\begin{equation}\label{eq:Cartan-left-blocks}
\begin{aligned}
&\sum_{\substack{\ell(\lambda)=d-1\\|\lambda|=0}}
\frac{\mathfrak a_\lambda(\tau_*(K_X\gamma))}{\lambda!}
\sum_{\pi\in\Pi_2(I)}^{*}\prod_{B\in\pi}|B|!
\\
&\quad+
\sum_{\substack{\ell(\lambda)=d-2\\|\lambda|=0}}
\frac{\mathfrak a_\lambda(\tau_*(K_X^2\gamma))}{\lambda!}
\Bigg[
\sum_{\pi\in\Pi_2(I)}^{*}\prod_{B\in\pi}|B|!
\\
&\qquad\qquad-
\sum_{\pi\in\Pi_2(I)}^{*}\sum_{C\in\pi}
(|C|+1)!\gk{|C|-1}{\lambda_C}
\prod_{B\in\pi\backslash\{C\}}|B|!
+\sum_{\pi\in\Pi_3(I)}^{*}\prod_{B\in\pi}|B|!
\Bigg].
\end{aligned}
\end{equation}
Thus it remains to recover these block sums from the last three
lines of Proposition~\ref{prop:kappa}.

\smallskip
\noindent\emph{(2) Expansion of the right-hand side.}
For $\tau_{r*}\eta=\sum_a\eta_{a,1}\otimes\cdots\otimes\eta_{a,r}$,
Lemma~\ref{lem:products} gives
\[
\pi_{1*}\left(
\Gamma_{i_1,n}\cdots\Gamma_{i_r,n}\cup\pi_2^*\eta
\right)\cup-
=\sum_a\mathfrak G_{i_1}(\eta_{a,1})\cdots
\mathfrak G_{i_r}(\eta_{a,r}).
\]
Apply this identity to the quadratic and cubic terms of
\eqref{eq:kappa-explicit}, and expand the operators using
\eqref{eq:G-expansion} and \eqref{eq:G0}--\eqref{eq:G1}.
Commuting Nakajima operators produces terms involving $e_X$,
which vanish here since $K_Xe_X=0$.
Thus, in the terms involving $K_X$, we only need to consider
the product of two leading terms or the product of a leading
term and a term involving $K_X$.
In the terms involving $K_X^2$, only products of leading
terms contribute.

A leading term of $\mathfrak G_i$ has length $i+2$ and coefficient
$-1$, whereas its $K_X$-term has length $i+1$ and coefficient
$g_{K_X,\mu}^{(i)}$. Consequently the factors $(i+2)!$ in
\eqref{eq:kappa-explicit} give
\[
\begin{aligned}
(i+2)!(-1)&=-|B|!,
&& |B|=i+2,\\
(i+2)!\gk{i}{\lambda_B}
&=(|B|+1)!\gk{|B|-1}{\lambda_B},
&& |B|=i+1.
\end{aligned}
\]
Suppose $\lambda=\mu_1\sqcup\cdots\sqcup\mu_r$.
When the product of Nakajima operators is written
in the form $\mathfrak a_\lambda/\lambda!$,
the coefficient is $\lambda!/(\mu_1!\cdots\mu_r!)$.
This counts the ways of assigning the labelled entries
of $\lambda$ to $r$ ordered blocks with the prescribed
multisets $\mu_1,\ldots,\mu_r$.
Thus we obtain sums over ordered block decompositions of $I$.

The term involving $K_X/2$ gives monomials of length $d-1$
when both Chern character operators contribute their leading
terms, and monomials of length $d-2$ when one contributes
its leading term and the other its $K_X$-term.
The latter contribution has a minus sign.
The terms involving $K_X^2/2$ and $-K_X^2/6$
use only the leading terms and both give monomials
of length $d-2$.
The cubic contribution is positive since $-(-1)^3=1$.
Thus, after subtracting \eqref{eq:C-linear}, the right-hand side is
\begin{equation}\label{eq:Cartan-right-blocks}
\begin{aligned}
&\sum_{\substack{\ell(\lambda)=d-1\\|\lambda|=0}}
\frac{\mathfrak a_\lambda(\tau_*(K_X\gamma))}{\lambda!}
\left[\frac12\sum_{B\sqcup C=I}^{*}|B|!\,|C|!\right]
\\
&\quad+
\sum_{\substack{\ell(\lambda)=d-2\\|\lambda|=0}}
\frac{\mathfrak a_\lambda(\tau_*(K_X^2\gamma))}{\lambda!}
\Bigg[\frac12\sum_{B\sqcup C=I}^{*}|B|!\,|C|!
\\
&\qquad\qquad-
\frac12\sum_{B\sqcup C=I}^{*}
\Bigl(
(|B|+1)!\,|C|!\,\gk{|B|-1}{\lambda_B}
+|B|!\,(|C|+1)!\,\gk{|C|-1}{\lambda_C}
\Bigr)
\\
&\qquad\qquad+
\frac16\sum_{B_1\sqcup B_2\sqcup B_3=I}^{*}
|B_1|!\,|B_2|!\,|B_3|!
\Bigg].
\end{aligned}
\end{equation}
The first sum and the negative marked-block sum come from the
$K_X/2$ term. The other two sums come from the $K_X^2/2$ and
$-K_X^2/6$ terms, respectively. Note that these sums are over ordered partitions of $I$.

\smallskip
\Needspace{18\baselineskip}
\noindent\emph{(3) Comparison of the two expansions.}
To identify \eqref{eq:Cartan-left-blocks} and
\eqref{eq:Cartan-right-blocks}, we use the following three identities:
\begin{equation}\label{eq:Cartan-two-blocks}
\frac12\sum_{B\sqcup C=I}^{*}|B|!\,|C|!
=\sum_{\pi\in\Pi_2(I)}^{*}\prod_{B\in\pi}|B|!,
\end{equation}
\begin{equation}\label{eq:Cartan-marked-blocks}
\begin{aligned}
&\frac12\sum_{B\sqcup C=I}^{*}
\Bigl(
(|B|+1)!\,|C|!\,\gk{|B|-1}{\lambda_B}
+|B|!\,(|C|+1)!\,\gk{|C|-1}{\lambda_C}
\Bigr)
\\
&\qquad=
\sum_{\pi\in\Pi_2(I)}^{*}\sum_{C\in\pi}
(|C|+1)!\gk{|C|-1}{\lambda_C}
\prod_{B\in\pi\backslash\{C\}}|B|!,
\end{aligned}
\end{equation}
\begin{equation}\label{eq:Cartan-three-blocks}
\frac16\sum_{B_1\sqcup B_2\sqcup B_3=I}^{*}
|B_1|!\,|B_2|!\,|B_3|!
=\sum_{\pi\in\Pi_3(I)}^{*}\prod_{B\in\pi}|B|!.
\end{equation}
Indeed, each unordered two-block partition has two orderings, and
each unordered three-block partition has six. For
\eqref{eq:Cartan-marked-blocks}, each fixed marked partition also
occurs twice, once for each position of the marked block. These
counts remain valid when distinct blocks have the same sizes or
the same multisets of parts.

Equation~\eqref{eq:Cartan-two-blocks} matches both unmarked
two-block sums, while \eqref{eq:Cartan-marked-blocks} and
\eqref{eq:Cartan-three-blocks} match the marked and three-block sums.
The remaining terms on the two sides are therefore equal. Together
with their common contribution \eqref{eq:C-linear}, this proves
\eqref{eq:Cartan-comparison} for $d\geq4$.

\smallskip
\noindent\emph{(4) Low-degree cases.}
For $d=0,1$, \eqref{eq:Heisenberg}, \eqref{eq:La}, and
\eqref{eq:lowest-EF} give
\[
\mathcal C_0(\gamma)=\left(\int_X\gamma\right)\Id,
\qquad \mathcal C_1(\gamma)=0.
\]
For $d=2,3$, the block sums are empty. Equations
\eqref{eq:C-expansion}, \eqref{eq:identity-K}, and
\eqref{eq:G0}--\eqref{eq:G1} give
\[
\begin{aligned}
\mathcal C_2(\gamma)&=-2\mathfrak G_0(\gamma),\\
\mathcal C_3(\gamma)&=-6\mathfrak G_1(\gamma)
-2\mathfrak G_0(K_X\gamma).
\end{aligned}
\]
The possible empty correction for $d=2$ is zero by
\eqref{eq:f-initial} at total weight one. These expressions agree
with the operators associated with
\[
\kappa_{0,n}=1,\qquad \kappa_{1,n}=0,\qquad
\kappa_{2,n}=-2\Gamma_{0,n},\qquad
\kappa_{3,n}=-6\Gamma_{1,n}-2K_X\Gamma_{0,n},
\]
respectively, by Proposition~\ref{prop:kappa}. This completes the proof.
\end{proof}

\begin{proposition}\label{prop:all-mixed}
For homogeneous $\alpha,\beta$ and $a,b,n\geq0$,
\begin{equation}\label{eq:all-mixed}
\restr{[E_a(\alpha),F_b(\beta)]}
=-\pi_{1*}(\kappa_{a+b,n}\cup\pi_2^*(\alpha\beta))\cup-.
\end{equation}
\end{proposition}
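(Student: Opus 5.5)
The plan is to reduce to the case $b=0$, which is exactly Theorem~\ref{thm:Cartan-comparison}, and then raise $b$ by boundary differentiation.

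\emph{Base case $b=0$.} Since $F_0(\beta)=-\mathfrak a_1(\beta)$, we need
\[
[E_a(\alpha),\mathfrak a_1(\beta)]=\pi_{1*}(\kappa_{a,n}\cup\pi_2^*(\alpha\beta))\cup-.
\]
Theorem~\ref{thm:Cartan-comparison} gives this only for $\beta=1_X$. To insert a general $\beta$, I will use Proposition~\ref{prop:Chern-shifts} with $r=0$: $\mathfrak G_0(\beta)$ satisfies $[\mathfrak G_0(\beta),E_a(\alpha)]=E_a(\beta\alpha)$. By the adjoint identities, together with $[\mathfrak G_0(\beta),\mathfrak a_{-1}(1_X)]=\mathfrak a_{-1}(\beta)$ from \eqref{eq:G-a}, we also get $[\mathfrak G_0(\beta),\mathfrak a_1(1_X)]=-\mathfrak a_1(\beta)$ up to the graded sign.

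Apply the Jacobi identity to $[\mathfrak G_0(\beta),[E_a(\alpha),\mathfrak a_1(1_X)]]$. The left side vanishes, because both $\mathfrak G_0(\beta)$ and the inner bracket are cup-product operators and therefore supercommute. This gives
\[
[E_a(\alpha),\mathfrak a_1(\beta)]=\pm[E_a(\alpha\beta),\mathfrak a_1(1_X)],
\]
and Theorem~\ref{thm:Cartan-comparison} then yields the $b=0$ formula. The main thing to check here is the Koszul sign bookkeeping for odd $\alpha,\beta$, so that the result is stated with $\alpha\beta$ in the correct order.

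\emph{Induction on $b$.} Proposition~\ref{prop:Chern-shifts} gives $F_{b+1}(\beta)=-[\mathfrak d,F_b(\beta)]$ and $[\mathfrak d,E_a(\alpha)]=E_{a+1}(\alpha)$. Since $\mathfrak d$ is even, the Jacobi identity gives
\[
[E_a,F_{b+1}]=-[\mathfrak d,[E_a,F_b]]+[E_{a+1},F_b].
\]
By induction on $b$ (for all $a$ simultaneously), $[E_a,F_b]$ is a cup-product operator. It therefore commutes with $\mathfrak d$, which is also a cup-product operator on each $H^*(X^{[n]})$, and it preserves $n$. Hence the first term vanishes and $[E_a,F_{b+1}]=[E_{a+1},F_b]$. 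Iterating this down to $b=0$ gives
\[
[E_a,F_b]=[E_{a+b},F_0]=-\pi_{1*}(\kappa_{a+b,n}\cup\pi_2^*(\alpha\beta))\cup-.
\]

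\emph{Sector bookkeeping.} We still need that the bracket restricted to $H^*(X^{[n]})$ is formed as in \eqref{eq:mixed-sectors}, and that commuting with $\mathfrak d$ is compatible with this restriction. Both hold because $\mathfrak d$ acts diagonally, by $\ch_1(\mathcal O_X^{[n]})\cup-$ on each sector. The case $n=0$ is covered automatically, since $F_{\ell,0}=0$.

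The main obstacle is only the sign convention in the base step. Once the insertion of $\beta$ is justified, the rest is formal.
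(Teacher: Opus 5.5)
Your proof is correct. The induction on $b$ is exactly the paper's: $\mathfrak d$ is even, and $[E_a,F_b]$ is cup multiplication by the induction hypothesis, so it commutes with $\mathfrak d$ and $[E_a,F_{b+1}]=[E_{a+1},F_b]$.

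The only difference is how the base case $b=0$ is obtained for a general class $\beta$, with $\mathcal C_a(\gamma)=[E_a(\gamma),\mathfrak a_1(1_X)]$.

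\emph{The paper's route.} It reads $[E_a(\alpha),\mathfrak a_1(\beta)]=\mathcal C_a(\alpha\beta)$ directly off the normal-ordered expansion of $E_a(\alpha)$. Commuting $\mathfrak a_1(\beta)$ with a diagonal monomial removes one $-1$ mode and multiplies $\beta$ into the diagonal class. This is the same contraction observation used with $1_X$ in the proof of Theorem~\ref{thm:Cartan-comparison}.

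\emph{Your route.} You transfer the $1_X$ case formally. You apply the super-Jacobi identity with $\mathfrak G_0(\beta)$, and use three facts:
\begin{itemize}
\item $[\mathfrak G_0(\beta),E_a(\alpha)]=E_a(\beta\alpha)$;
\item $[\mathfrak G_0(\beta),\mathfrak a_1(1_X)]=-\mathfrak a_1(\beta)$, which is simply \eqref{eq:Chern-shifts} for $F_0$ with $r=0$, so no adjoint sign needs checking;
\item $[\mathfrak G_0(\beta),\mathcal C_a(\alpha)]=0$, since both are cup products.
\end{itemize}
The signs do work out: $[E_a(\alpha),\mathfrak a_1(\beta)]=(-1)^{\bar\alpha\bar\beta}[E_a(\beta\alpha),\mathfrak a_1(1_X)]=\mathcal C_a(\alpha\beta)$.

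\emph{Comparison.} Your version uses only operator identities already established, in the spirit of the derivations $\mathcal D_{r,\eta}$ the paper uses for the Serre relations. It avoids the diagonal-contraction bookkeeping. The paper's version is a one-line appeal to the expansion.

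One small correction. In the $n=0$ sector, $F_{\ell,0}=0$ only removes the term $E_{a,-1}F_{b,0}$. The term $F_{b,1}E_{a,0}$ survives. That case holds not because the sector is trivial, but because your whole argument, including Theorem~\ref{thm:Cartan-comparison}, is valid for every $n\geq0$.
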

\begin{proof}
Commuting with $F_0(\beta)=-\mathfrak a_1(\beta)$ removes a $-1$ mode and gives
$[E_a(\alpha),F_0(\beta)]=-\mathcal C_a(\alpha\beta)$, with the cohomology classes in the indicated order. By Theorem~\ref{thm:Cartan-comparison}, $\mathcal C_a$ is cup multiplication, so it commutes with $\mathfrak d$. If the assertion holds at $b$, Jacobi identity  gives
\[
0=[\mathfrak d,[E_a(\alpha),F_b(\beta)]]
=[E_{a+1}(\alpha),F_b(\beta)]-[E_a(\alpha),F_{b+1}(\beta)],
\]
proving the induction. Since $\mathfrak d$ is even, no new sign occurs.
\end{proof}
Taking the generating series of \eqref{eq:all-mixed}
and using the identity
\begin{equation}\label{eq:Laurent-difference}
\frac{\mathscr C_n(z)-\mathscr C_n(w)}{z-w}
=-\sum_{a,b\geq0}
\kappa_{a+b,n}z^{-a-1}w^{-b-1},
\end{equation}
we obtain \eqref{eq:main-EF}.

\subsection{Relations with the Cartan current}\label{sec:exchange}
We first prove \eqref{eq:main-HE}--\eqref{eq:main-FH}
using Propositions~\ref{prop:Chern-shifts} and
\ref{prop:logH}. We then prove the corresponding
relations for $\mathscr C_n$ without cancelling $K_X$.

\subsubsection{The kernel and the logarithmic relation}
Recall \eqref{eq:zeta}, and put $e_i=\operatorname{pr}_i^*e_X$ on $X\times X$.
We expand the inverses of these kernels as formal Laurent series in \(s^{-1}\).
The choice of $K_1$ or $K_2$ in a term already multiplied by $[\Delta_X]$
makes no difference.

\begin{lemma}\label{lem:zeta}
In ordinary cohomology of $X\times X$,
\begin{equation}\label{eq:zeta-ratio}
\frac{\zeta(s)}{\zeta(-s)}=1-\frac{2\Delta_{X*}K_X}{s^3},\qquad
(\Delta_{X*}K_X)^2=0.
\end{equation}
\end{lemma}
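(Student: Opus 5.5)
We need the identity ζ(s)/ζ(−s) = 1 − 2Δ_*K_X/s^3 together with (Δ_*K_X)^2 = 0. Write D=[Δ_X]. The approach is to compute directly in H^*(X×X), using the standard diagonal identities: D·D = Δ_*(e_X), D·pr_i^*α = Δ_*α, and the facts that Δ_* is a module map and that K_X^3 = 0, K_X e_X = 0 on a surface.

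The first step is to simplify the kernels. Since D·K_1 = Δ_*K_X and D·K_1^2 = Δ_*K_X^2, every term is supported on the diagonal except the constant 1. Expanding 1/(s(s+K_1)) = s^{-2}(1 − K_1/s + K_1^2/s^2) gives
\[
\zeta(s)=1+\frac{D}{s^2}-\frac{\Delta_{X*}K_X}{s^3}+\frac{\Delta_{X*}K_X^2}{s^4},
\]
and, replacing s by −s,
\[
\zeta(-s)=1+\frac{D}{s^2}+\frac{\Delta_{X*}K_X}{s^3}+\frac{\Delta_{X*}K_X^2}{s^4}.
\]
Call these 1+a−b+c and 1+a+b+c respectively, where a = D/s^2, b = Δ_*K_X/s^3, c = Δ_*K_X^2/s^4.

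The second step is to record the product table. We have D^2 = Δ_*e_X. Also D·Δ_*K_X = Δ_*(K_X e_X) = 0 and D·Δ_*K_X^2 = 0. Next, (Δ_*α)(Δ_*β) = Δ_*(αβ e_X), which vanishes whenever α or β has positive degree, since the degree is then at least 6 > 4 on X. In particular (Δ_*K_X)^2 = 0, which is the second claim. Therefore the only nonzero product among a, b, c is a^2 = Δ_*e_X/s^4, and a^3 = Δ_*(e_X^2)/s^6 = 0.

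The third step is to show ζ(s) = (1−b)ζ(−s). Expanding the right side gives (1−b)(1+a+b+c) = 1+a+b+c − b − ab − b^2 − bc. All the cross products vanish by the table, so this equals 1+a+c. That is not 1+a−b+c, so the product has to be computed more carefully. Write ζ(−s) = 1+x with x = a+b+c. Then ζ(s) = 1+x−2b. The formal inverse is 1/(1+x) = 1 − x + x^2 with x^2 = a^2, since all other products vanish. Hence
\[
\frac{\zeta(s)}{\zeta(-s)} = (1+x-2b)(1-x+x^2) = 1 - 2b + 2bx - 2b x^2 + (\text{terms } -x^2+x^2) .
\]
Here bx = 0 and bx^2 = 0, so the ratio equals 1 − 2b, which is the first claim.

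The only delicate point is the product table, specifically the claims that D·Δ_*K_X = Δ_*(K_X e_X) = 0 and that diagonal classes of positive-degree insertions annihilate one another. These follow from the projection formula Δ^*Δ_*α = α e_X. With that table in place, the remainder is bookkeeping of formal Laurent series in s^{-1}.
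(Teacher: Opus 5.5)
Your argument is correct and is essentially the paper's: you expand $\zeta(\pm s)$ exactly, using $K_1^3=0$ and the diagonal identities $[\Delta_X]K_i=\Delta_{X*}K_X$ and $[\Delta_X]^2=\Delta_{X*}e_X$. You then use that $\Delta_{X*}K_X$ annihilates every diagonal-supported class, which is what the paper does when it writes $\zeta(s)/\zeta(-s)=1-2[\Delta_X]K_1s^{-3}\zeta(-s)^{-1}$ and notes that every nonconstant term of the inverse contains $[\Delta_X]$. Replace your false start with $(1-b)$ by the identity $\zeta(s)=(1-2b)\zeta(-s)$, which follows in one line from your product table because $ab=b^2=bc=0$; this also spares you the sloppy inverse bookkeeping.
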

\begin{proof}
The projection formula and diagonal self-intersection give
\begin{equation}\label{eq:diagonal-selfintersection}
[\Delta_X]K_1=[\Delta_X]K_2=\Delta_{X*}K_X,\qquad
[\Delta_X]^2=\Delta_{X*}e_X.
\end{equation}
Consequently $[\Delta_X]^2K_1=\Delta_{X*}(e_XK_X)=0$ and
$[\Delta_X]^3=\Delta_{X*}(e_X^2)=0$.
Since $K_1^3=0$, the two kernels are exactly
\begin{equation}\label{eq:zeta-expansions}
\zeta(\pm s)=1+\frac{[\Delta_X]}{s^2}
\mp\frac{[\Delta_X]K_1}{s^3}
+\frac{[\Delta_X]K_1^2}{s^4}.
\end{equation}
It follows that
\[
\frac{\zeta(s)}{\zeta(-s)}
=1-\frac{2[\Delta_X]K_1}{s^3}\,\zeta(-s)^{-1}.
\]
The inverse exists because its constant coefficient is one. Every nonconstant term of that inverse contains $[\Delta_X]$; its product with $[\Delta_X]K_1$ is zero by \eqref{eq:diagonal-selfintersection}. Thus this expression is $1-2\Delta_{X*}K_X/s^3$ without any truncation error. Finally,
\[
(\Delta_{X*}K_X)^2
=[\Delta_X]^2K_1^2=\Delta_{X*}(e_XK_X^2)=0.
\]
This proves both assertions. In particular, the argument does not assert $[\Delta_X]^2=0$.
\end{proof}

The symbol $[\cdot]_{z\gg w,\,z^{\leq0},\,w^{<0}}$ means expansion in $z^{-1}$ with polynomial coefficients in $w$, followed by the indicated Laurent projection. Every prescribed coefficient receives finitely many contributions.

\begin{theorem}\label{thm:exchange}
The Cartan exchange identities \eqref{eq:main-HE} and \eqref{eq:main-FH} hold for
$n\geq0$ and $n\geq1$, respectively. For $\theta=\alpha\otimes\beta$,
the class $\alpha$ is paired with the Cartan current and $\beta$ with the raising
or lowering current; $\sigma_X^*$ records their graded interchange.
\end{theorem}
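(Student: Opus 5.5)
The plan is to derive both exchange identities from Proposition~\ref{prop:Chern-shifts} and Proposition~\ref{prop:logH}. The logarithm of $H_n(z)$ is linear in the classes $\Gamma_{k,n}$, which are exactly the classes whose cup-product operators $\mathfrak G_k$ act on $E_j$ and $F_j$ by index shifts.

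\emph{Step 1: a logarithmic commutator.} By \eqref{eq:logH}, the operator $(\log H_n(z))(\alpha)$, formed before evaluation as required in \eqref{eq:left-insertion}, is
\[
(\log(1+K_X/z))(\alpha)\Id-\sum_{k\geq0}(k+2)!\,z^{-k-3}\,\mathfrak G_k(K_X\alpha).
\]
The scalar term acts as the same multiple of the identity in every sector. Taking the difference between sectors $n+1$ and $n$ therefore kills it. Applying \eqref{eq:Chern-shifts} to the second term gives
\[
[(\log H)(z;\alpha),E_j(\beta)]=-\sum_{k\geq0}(k+1)(k+2)\,z^{-k-3}E_{j+k}(K_X\alpha\beta).
\]
Summing over $j$ with weights $w^{-j-1}$ and writing $E_{j+k}=[w^{k}E(w)]_{w^{<0}}$, the coefficient becomes the expansion of $-2K_X/(z-w)^3$ for $z\gg w$. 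I will then express this using the diagonal insertion: putting $\alpha\beta$ into the single current $E(w)$ is the same as inserting $\theta\cup[\Delta_X]$ into the two-variable product. The result is
\[
\log H_{n+1}(z)\,E_n(w)-E_n(w)\log H_n(z)=\bigl(\log\zeta(z-w)-\log\zeta(w-z)\bigr)\cdot(\text{product}),
\]
because Lemma~\ref{lem:zeta} gives $\log(\zeta(s)/\zeta(-s))=-2\Delta_{X*}K_X/s^3$ exactly, using $(\Delta_{X*}K_X)^2=0$.

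\emph{Step 2: exponentiation.} Since $(\Delta_{X*}K_X)^2=0$, the adjoint action of $\log H$ on $E$ is nilpotent of order one after the diagonal insertion. Conjugating with $H=\exp(\log H)$ then multiplies $E$ by the kernel $\zeta(z-w)/\zeta(w-z)$ exactly. The kernel factor $1-2\Delta_{X*}K_X/(z-w)^3$ is absorbed into $\theta$ together with $\sigma_X^*$, which records the graded interchange of $\alpha$ and $\beta$.

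I must check several points here:
\begin{itemize}
\item all coefficients of $H_n$ are cup products, so they supercommute and the exponential is well defined;
\item the projection $[\cdot]_{z\gg w,\,z^{\leq0},\,w^{<0}}$ arises because $E_{j+k}$ only sees the negative-$w$ part, while the $z$-expansion starts with $H=1+O(z^{-1})$;
\item the sector shift from $H_{n+1}$ on the left to $H_n$ on the right is consistent.
\end{itemize}

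\emph{Step 3: the lowering relation.} For \eqref{eq:main-FH}, I can either repeat the computation using the second formula of \eqref{eq:Chern-shifts}, whose sign change reverses the kernel, or take graded adjoints via \eqref{eq:lowest-EF} and the self-adjointness of the cup operators.

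\emph{Main obstacle.} The main difficulty is the bookkeeping of the insertion conventions. I must show that the multiplication of $\alpha\beta$ by $K_X$, acting on a single current, is exactly the action of the kernel $\Delta_{X*}K_X$ on $\theta$, with the correct graded sign. I must also show that the Laurent projection commutes with the exponentiation. I would first verify both points at the level of the linear term in $\Delta_{X*}K_X$, since nilpotency makes this linear term the whole correction.
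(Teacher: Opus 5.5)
Your proposal is correct and follows essentially the same route as the paper. The paper also takes the logarithmic commutator from Propositions~\ref{prop:Chern-shifts} and~\ref{prop:logH}, exponentiates at the level of classes with the second-order term vanishing because $K_X^2e_X=0$ (equivalently $(\Delta_{X*}K_X)^2=0$), identifies the resulting factor with $\zeta(z-w)/\zeta(w-z)$ via Lemma~\ref{lem:zeta}, multiplies on the right by $H_n(z)$, and handles the lowering relation by the same argument, while the bookkeeping you flag (distributing the Cartan class through the small diagonal, graded signs, and compatibility of the $w$-projection) is exactly what it treats in \eqref{eq:iterated-log-exchange}--\eqref{eq:w-projection}.
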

\begin{proof}
It suffices to use $\theta=\alpha\otimes\beta$ with homogeneous cohomology classes. Proposition~\ref{prop:Chern-shifts} gives the following identity of maps from $H^*(X^{[n]})$ to $H^*(X^{[n+1]})$:
\begin{equation}\label{eq:Gamma-E-colored}
\begin{split}
&\mathfrak G_{k,n+1}(\alpha)E_{j,n}(\beta)
-(-1)^{\bar\alpha\bar\beta}E_{j,n}(\beta)\mathfrak G_{k,n}(\alpha)\\
&\hspace{35mm}=\frac1{k!}E_{j+k,n}(\alpha\beta).
\end{split}
\end{equation}
By \eqref{eq:left-insertion}, the class $\log H_n(z)$ likewise gives a family $\bigl(\log H_n(z)\bigr)(\alpha)$ of cup-product operators. The term $\log(1+K_X/z)$ in \eqref{eq:logH}
is independent of $n$ and acts by multiplication
by the same scalar on every $H^*(X^{[n]})$.
Hence its contributions cancel in the commutator. Substitution of \eqref{eq:Gamma-E-colored} into \eqref{eq:logH} therefore gives
\begin{align}
&\bigl(\log H_{n+1}(z)\bigr)(\alpha)E_{j,n}(\beta)
-(-1)^{\bar\alpha\bar\beta}E_{j,n}(\beta)\bigl(\log H_n(z)\bigr)(\alpha)\notag\\
&\qquad=-\sum_{k\geq0}\frac{(k+1)(k+2)}{z^{k+3}}
E_{j+k,n}(K_X\alpha\beta).
\label{eq:log-exchange}
\end{align}
The generating identities are
\begin{equation}\label{eq:shift-kernel}
\begin{split}
\frac2{(z-w)^3}&=\sum_{k\geq0}(k+1)(k+2)\frac{w^k}{z^{k+3}},\\
[w^kE_n(w;\gamma)]_{w<0}&=\sum_{j\geq0}E_{j+k,n}(\gamma)w^{-j-1}.
\end{split}
\end{equation}
Thus the right side of the current form of \eqref{eq:log-exchange} is
$[-2E_n(w;K_X\alpha\beta)/(z-w)^3]_{z\gg w,\,w<0}$.

We exponentiate at the level of classes. Choose a homogeneous K\"unneth decomposition
$\tau_{2*}\alpha=\sum_\ell\alpha_\ell'\otimes\alpha_\ell''$.
By repeated use of \eqref{eq:log-exchange} and Lemma~\ref{lem:products}, for $q\geq1$,
\begin{equation}\label{eq:iterated-log-exchange}
\begin{aligned}
&\sum_{j=0}^q(-1)^j\binom qj\sum_\ell
(-1)^{\bar\alpha_\ell''\bar\beta}
\bigl((\log H_{n+1}(z))^{q-j}\bigr)(\alpha_\ell')\\
&\hspace{22mm}\cdot E_n(w;\beta)
\bigl((\log H_n(z))^j\bigr)(\alpha_\ell'')\\
&\qquad=\left[\frac{(-2)^q}{(z-w)^{3q}}
E_n(w;K_X^qe_X^{q-1}\alpha\beta)\right]_{z\gg w,\,w<0}.
\end{aligned}
\end{equation}
Indeed, the left side is the binomial expansion of the $q$ nested logarithmic commutators, with their cohomology classes distributed by $\tau_{q*}\alpha$. Each commutator contributes one $K_X$ and the shift in \eqref{eq:shift-kernel}. Indeed, the self-intersection formula for the small diagonal gives
\[
\tau_q^*\tau_{q*}\alpha=e_X^{q-1}\alpha.
\]
Since each application of \eqref{eq:log-exchange} introduces a factor of $K_X$, the $q$-fold expression involves $K_X^q e_X^{q-1}\alpha\beta$.
This vanishes for $q\ge2$, since $K_Xe_X=0$.

The factor
\[
(-1)^{\bar\alpha_\ell''\bar\beta}
\]
in \eqref{eq:iterated-log-exchange} accounts for moving the cohomology class $\alpha_\ell''$ past $\beta$. The separate factor $(-1)^j$ is the binomial sign.

It is also legitimate to project to negative powers of $w$ after each
application of \eqref{eq:log-exchange}. If
\[
P_-f=[f]_{w<0},
\]
then, for every $k\geq0$,
\begin{equation}\label{eq:w-projection}
P_-\bigl(w^kP_-f\bigr)=P_-\bigl(w^kf\bigr).
\end{equation}
Indeed, a term whose $w$-degree is already nonnegative remains
nonnegative after multiplication by $w^k$. By \eqref{eq:shift-kernel}, every subsequent shift in the iteration has this form. Thus a discarded term cannot contribute to the final negative-power part.

We sum \eqref{eq:iterated-log-exchange} with coefficients $1/q!$ and include the $q=0$ term. The left-hand side exponentiates at the level of classes. Setting
$a=q-j$ and $b=j$, and using
\[
\frac1{q!}\binom qj=\frac1{a!\,b!},
\]
gives
\begin{equation}\label{eq:exponentiate-log-exchange}
\begin{aligned}
&\sum_{q\geq0}\frac1{q!}\sum_{j=0}^q(-1)^j\binom qj
\sum_\ell(-1)^{\bar\alpha_\ell''\bar\beta}
\bigl((\log H_{n+1}(z))^{q-j}\bigr)(\alpha_\ell')\\
&\hspace{20mm}\cdot E_n(w;\beta)
\bigl((\log H_n(z))^j\bigr)(\alpha_\ell'')\\
&\quad=\sum_\ell(-1)^{\bar\alpha_\ell''\bar\beta}
\sum_{a,b\geq0}\frac{(-1)^b}{a!b!}
\bigl((\log H_{n+1}(z))^a\bigr)(\alpha_\ell')\\
&\hspace{20mm}\cdot E_n(w;\beta)
\bigl((\log H_n(z))^b\bigr)(\alpha_\ell'')\\
&\quad=\sum_\ell(-1)^{\bar\alpha_\ell''\bar\beta}
H_{n+1}(z;\alpha_\ell')E_n(w;\beta)
\bigl(H_n(z)^{-1}\bigr)(\alpha_\ell'').
\end{aligned}
\end{equation}

By Lemma~\ref{lem:zeta}, the right side of \eqref{eq:iterated-log-exchange} vanishes for $q\geq2$. Thus only $q=0,1$ remain in the preceding sum. The quotient identity in the same lemma gives
\begin{equation}\label{eq:employ-zeta-lemma}
\begin{aligned}
&\sum_\ell(-1)^{\bar\alpha_\ell''\bar\beta}
H_{n+1}(z;\alpha_\ell')E_n(w;\beta)
\bigl(H_n(z)^{-1}\bigr)(\alpha_\ell'')\\
&\quad=\left[\left(\int_X\alpha\right)E_n(w;\beta)
-\frac{2E_n(w;K_X\alpha\beta)}{(z-w)^3}\right]_{z\gg w,\,w<0}\\
&\quad=\left[(\mathbf 1\,E_n(w))\left(
(\alpha\otimes\beta)\cup\frac{\zeta(z-w)}{\zeta(w-z)}
\right)\right]_{z\gg w,\,w<0}.
\end{aligned}
\end{equation}
The projection formula and the counit identity
$\sum_\ell(\int_X\eta_\ell')\eta_\ell''=\eta$ for $\tau_{2*}\eta=\sum_\ell\eta_\ell'\otimes\eta_\ell''$ explain this last equality. 

By \eqref{eq:employ-zeta-lemma} and
$H_n(z)^{-1}H_n(z)=1$, we have, for
$\theta=\alpha\otimes\beta$ and $s=z-w$,
\begin{align*}
&(H_{n+1}(z)E_n(w))(\theta)\\
&\quad=
\bigl((H_{n+1}(z)E_n(w)H_n(z)^{-1})
H_n(z)\bigr)(\theta)\\
&\quad=
\left[
(-1)^{\bar\alpha\bar\beta}
E_n(w;\beta)H_n(z;\alpha)
-\frac{2}{s^3}
(E_n(w)H_n(z))
\bigl(\Delta_{X*}(K_X\alpha\beta)\bigr)
\right]_{z\gg w,\,z^{\leq0},\,w<0}\\
&\quad=
\left[
(E_n(w)H_n(z))
\left(
\sigma_X^*\left(
\theta\cup\frac{\zeta(s)}{\zeta(-s)}
\right)
\right)
\right]_{z\gg w,\,z^{\leq0},\,w<0}.
\end{align*}
Here the products are understood using the small diagonal,
as in Lemma~\ref{lem:products}. The last equality follows from
\[
\sigma_X^*(\alpha\otimes\beta)
=(-1)^{\bar\alpha\bar\beta}\beta\otimes\alpha,
\qquad
\sigma_X^*\bigl(
(\alpha\otimes\beta)\cup\Delta_{X*}K_X
\bigr)
=\Delta_{X*}(K_X\alpha\beta).
\]
This proves \eqref{eq:main-HE}.

The relation \eqref{eq:main-FH} follows by the same argument,
using the commutator formula for $F_j$ in
\eqref{eq:Chern-shifts}.
The interchange of the two cohomology classes
is accounted for by $\sigma_X^*$.
\end{proof}

\subsubsection{Exchange relations for the regular Cartan current}
The identities just proved concern $H_n=1+K_X\mathscr C_n$. Multiplication by $K_X$ loses information, so these identities alone do not imply the corresponding relations for $\mathscr C_n$. We first derive the relations using a formal parameter \(u\), and then specialize to \(u=K_X\).

\begin{proposition}\label{prop:divided-exchange}
Set $s=z-w$. 
For homogeneous $\alpha,\beta$,
\begin{align}
&\mathscr C_{n+1}(z;\alpha)E_n(w;\beta)
-(-1)^{\bar\alpha\bar\beta}E_n(w;\beta)\mathscr C_n(z;\alpha)\notag\\
&\quad=\left[(E_n(w)H_n(z))\left(
\tau_{2*}\left(\left(-\frac2{(z-w)^3}
+\frac{2(e_X-K_X^2)}{(z-w)^5}\right)\alpha\beta\right)
\right)\right]_{z\gg w,\,w<0}.
\label{eq:divided-HE}
\end{align}
The lowering identity has left side
$F_n(w;\beta)\mathscr C_n(z;\alpha)-(-1)^{\bar\alpha\bar\beta}\mathscr C_{n-1}(z;\alpha)F_n(w;\beta)$;
on the right replace the product by $H_{n-1}(z)F_n(w)$ and the cohomology class $\alpha\beta$ by $\beta\alpha$. All products are evaluated on their displayed input classes, not on retained output factors.
\end{proposition}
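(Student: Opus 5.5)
We work with a formal parameter $u$ in place of $K_X$, as the text preceding the proposition indicates. Put $H^u_n(z)=c(\mathcal I_{\mathcal Z_n},z+u)/c(\mathcal I_{\mathcal Z_n},z)$. Then $H^u_n=1+u\,\mathscr C^u_n$, and $\mathscr C^u_n$ specializes to $\mathscr C_n$ at $u=K_X$. The argument of Theorem~\ref{thm:exchange} runs with $u$ in place of the outer $K_X$. By \eqref{eq:P}, $\log H^u_n(z)=P_n(z+u)-P_n(z)$, and the $\Gamma$-part of this logarithm is
\[
\sum_{k\ge0}(k+1)!\Bigl[(z+u)^{-k-2}-z^{-k-2}\Bigr]\Bigl(\Gamma_{k,n}+\tfrac{K_X}2\Gamma_{k-1,n}+\tfrac{2K_X^2-e_X}{12}\Gamma_{k-2,n}\Bigr).
\]
Here $u$ and the $K_X,e_X$ coming from $\td(X)^{-1}$ must be kept distinct. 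Proposition~\ref{prop:Chern-shifts} turns each $\Gamma_{k}$ commutator with $E_{j,n}(\beta)$ into $E_{j+k,n}/k!$. Using \eqref{eq:shift-kernel}, we resum to get a kernel of the form
\[
\Lambda_u(s)=\Bigl(\log\frac{s(s+u)}{\dots}\Bigr)\text{-type combination},
\]
which is an explicit function of $s=z-w$, $u$, $K_X$ and $e_X$ acting on $E_n(w;\cdot)$. Concretely, I expect the commutator of $\log H^u$ with $E$ to be $[E_n(w)\,\phi_u(z-w)]_{w<0}$, where
\[
\phi_u(s)=\log\frac{(s+u)\,\zeta\text{-like}}{s}-\dots
\]
In fact it is $\log\bigl(\zeta_u(s)/\zeta_u(-s)\bigr)$, with $\zeta_u(s)=1+[\Delta]/(s(s+u))$ after the diagonal insertion.

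Next we exponentiate exactly as in \eqref{eq:iterated-log-exchange}--\eqref{eq:exponentiate-log-exchange}. Iterated commutators carry the class $\tau_{q*}$ and hence a factor $e_X^{q-1}$. When $u$ is formal, the product $u^qe_X^{q-1}$ does not vanish automatically, but after specialization $K_Xe_X=0$. The terms that survive at order $u^1$ and $u^2$ are those that matter once we divide by $u$. This yields
\[
H^u_{n+1}(z)E_n(w)=\bigl[(E_nH^u_n)(\sigma^*(\theta\cup R_u(s)))\bigr],
\]
where $R_u$ is the ratio kernel. Since $H^u=1+u\mathscr C^u$ and the $u^0$ terms on the two sides agree, we subtract them and divide by $u$ in the polynomial ring in $u$, which is legitimate because multiplication by $u$ is injective there. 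This gives an identity for $\mathscr C^u_{n+1}E_n-(-1)^{\bar\alpha\bar\beta}E_n\mathscr C^u_n$ equal to $(E_nH^u_n)$ applied to $\tau_{2*}((R_u-1)/u\cdot\alpha\beta)$. We then specialize $u=K_X$.

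The main obstacle is computing $(R_u(s)-1)/u$ exactly, through order $u^2$, including the $e_X$ and $K_X$ contributions coming from $\td^{-1}$ and the diagonal self-intersection. We must also check that at $u=K_X$ it reduces to $-2/s^3+2(e_X-K_X^2)/s^5$.

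To do this, we first expand $\log H^u$ in $u$ up to $u^3$ as in \eqref{eq:Taylor}. We keep separate track of the linear terms, which give the $-2/s^3$ part together with its $u$-corrections $\propto u/s^4$ and $u^2/s^5$, and of the $\td$-terms, which contribute $e_X/s^5$ and $K_Xu/s^4$-type pieces. We then verify that the $s^{-4}$ terms cancel between the $u$ and $K_X$ contributions, just as the analogous terms cancelled in Proposition~\ref{prop:logH}. The quadratic exponential term $q=2$ contributes $u^2e_X$ times the square of the leading kernel. This is of too high an order in $s$, or it vanishes after the division and the specialization $K_Xe_X=0$; this should be checked.

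The lowering identity follows by the same argument using the $F$ formula in \eqref{eq:Chern-shifts}, or alternatively by taking graded adjoints via \eqref{eq:lowest-EF}.
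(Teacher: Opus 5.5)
Your architecture matches the paper's proof: keep the shift $u$ formal, read off the commutator of $P_n(z+u)-P_n(z)$ with $E_n$ from \eqref{eq:P} and Proposition~\ref{prop:Chern-shifts}, exponentiate at the level of classes as in Theorem~\ref{thm:exchange}, subtract $1$, divide by $u$, and set $u=K_X$. The gap is in the kernel, which is the only step with real content.

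You assert that the commutator of $\log H^u_n$ with $E_n$ is governed by $\log(\zeta_u(s)/\zeta_u(-s))$ with $\zeta_u(s)=1+[\Delta_X]/(s(s+u))$. That is false as a series in $u$. Resumming \eqref{eq:P} as in \eqref{eq:shift-kernel} gives the kernel $[\Delta_X]b(s)$ for $P_n$. Here $b(s)=s^{-2}+K_1s^{-3}+(K_1^2-e_1/2)s^{-4}$, so $[\Delta_X]b(s)=\log\bigl(1+[\Delta_X]/(s(s-K_1))\bigr)$. Hence
\[
\log R_u(s)=[\Delta_X]\bigl(b(s+u)-b(s)\bigr)
=\log\frac{1+[\Delta_X]/((s+u)(s+u-K_1))}{1+[\Delta_X]/(s(s-K_1))},
\]
and in this kernel $u$ and the $K_1$ coming from $\td(X)^{-1}$ enter differently. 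Your formula is what you get by substituting $u$ for $K_1$, and it agrees with the true kernel only after setting $u=K_1$. For instance, the true kernel contains $[\Delta_X](3u^2-3uK_1)s^{-4}$, while yours has no $s^{-4}$ term at all.

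Agreement at $u=K_1$ is not enough. If $uA(u)$ and $uB(u)$ agree at $u=K_1$, you only learn $K_1(A(K_1)-B(K_1))=0$, and multiplication by $K_1$ is not injective. Avoiding exactly this loss of information is why the formal parameter is introduced. Here the quotients happen to coincide, but you can only see that from the true kernel. Your last paragraph, which tracks the $\td$-terms separately and expects an $s^{-4}$ cancellation, is implicitly using the correct kernel and contradicts the formula you wrote.

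The computation you defer is short. Since $[\Delta_X]^2=[\Delta_X]e_1$ and $[\Delta_X]^3=0$, with $\delta=b(s+u)-b(s)$ one has $R_u=1+[\Delta_X]\delta+\tfrac12[\Delta_X]e_1\delta^2$. Because $\delta$ is divisible by $u$, $\delta^2/u$ is still a multiple of $u$. So the quadratic term vanishes at $u=K_1$ because $K_1e_1=0$; its order in $s$ plays no role. The linear term gives
\[
\left.\frac{\delta}{u}\right|_{u=K_1}
=b'(s)+\frac{K_1}{2}b''(s)+\frac{K_1^2}{6}b'''(s)
=-\frac{2}{s^3}+\frac{2(e_1-K_1^2)}{s^5}.
\]
The $s^{-4}$ terms cancel ($-3K_1+3K_1$), and the $s^{-5}$ coefficient is $(-4+6-4)K_1^2+2e_1$. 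With this correction your argument becomes the paper's proof, and the lowering identity then follows by graded adjoint, as you say.
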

\begin{proof}
Retain the independent shift and set
$H_n(z;u)=\exp(P_n(z+u)-P_n(z))$ as a class-valued series. For $\gamma\in H^*(X)$, denote the corresponding cup-product operator by $H_n(z;u;\gamma)$. Equations \eqref{eq:P} and \eqref{eq:Gamma-E-colored} show that the commutator of $P_n(z;\alpha)$ with $E_n(w;\beta)$ depends on $\alpha$ and $\beta$ through the kernel
\begin{equation}\label{eq:P-E-kernel}
[\Delta_X]\left(\frac1{s^2}+\frac{K_1}{s^3}
+\frac{K_1^2-e_1/2}{s^4}\right)
=\log\left(1+\frac{[\Delta_X]}{s(s-K_1)}\right).
\end{equation}
For example, its evaluation on $\alpha\otimes\beta$ is
\[
\left[E_n\left(w;\left(\frac1{s^2}+\frac{K_X}{s^3}
+\frac{K_X^2-e_X/2}{s^4}\right)\alpha\beta\right)\right]_{z\gg w,\,w<0}.
\]
The factorials in \eqref{eq:P}, divided by $k!$, give these three rational terms. The logarithm in \eqref{eq:P-E-kernel} follows from
$[\Delta_X]^2=[\Delta_X]e_1$ and $[\Delta_X]^3=0$.
The same expansion, with the Cartan cohomology classes combined as in \eqref{eq:iterated-log-exchange}, therefore proves, for every $\theta\in H^*(X\times X)$,
\begin{equation}\label{eq:formal-shift-HE}
\begin{split}
&(H_{n+1}(z;u)E_n(w))(\theta)\\
&\quad=\left[(E_n(w)H_n(z;u))\left(
\sigma_X^*\left(\theta\cup
\frac{1+[\Delta_X]/((s+u)(s+u-K_1))}
{1+[\Delta_X]/(s(s-K_1))}\right)\right)\right]_{z\gg w,\,w<0}.
\end{split}
\end{equation}
All coefficients are formal series in $u$ before specialization. This step does not use the undivided relation after $u=K_X$.

We now derive the relation for $\mathscr C_n(z)$
from \eqref{eq:formal-shift-HE}.
Since
$H_n(z;u)=1+u\mathscr C_n(z;u)$,
we subtract the term corresponding to $1$ in the kernel
and divide by $u$, before setting $u=K_X$.

To compute the remaining kernel, put
$b(s)=s^{-2}+K_1s^{-3}+(K_1^2-e_1/2)s^{-4}$ within this calculation. Recall from
\eqref{eq:P-E-kernel} that
\[
\log\left(1+\frac{[\Delta_X]}{s(s-K_1)}\right)
=[\Delta_X]b(s),
\]
where
\[
b(s)=\frac1{s^2}+\frac{K_1}{s^3}
+\frac{K_1^2-e_1/2}{s^4}.
\]
Hence the quotient of kernels in
\eqref{eq:formal-shift-HE} is
\[
\begin{aligned}
&\exp\left(
[\Delta_X](b(s+u)-b(s))
\right)\\
&\quad=1+[\Delta_X](b(s+u)-b(s))
+\frac12[\Delta_X]e_1(b(s+u)-b(s))^2,
\end{aligned}
\]
since $[\Delta_X]^2=[\Delta_X]e_1$
and $[\Delta_X]^3=0$.

Since $b(s+u)-b(s)$ is divisible by $u$, we may
divide by $u$ before setting $u=K_1$.
Taylor expansion gives
\begin{align*}
\left.\frac{b(s+u)-b(s)}{u}\right|_{u=K_1}
&=b'(s)+\frac{K_1}{2}b''(s)
+\frac{K_1^2}{6}b^{(3)}(s)\\
&=-\frac{2}{s^3}
+\frac{2(e_1-K_1^2)}{s^5}.
\end{align*}
The second term vanishes after specialization,
since
\[
\frac{e_1(b(s+u)-b(s))^2}{u}
\]
is divisible by $u$ and $K_1e_1=0$.

Thus, after subtracting $1$, dividing by $u$,
and setting $u=K_1$, the kernel in
\eqref{eq:formal-shift-HE} becomes
\[
[\Delta_X]\left(
-\frac{2}{s^3}
+\frac{2(e_1-K_1^2)}{s^5}
\right).
\]
For $\theta=\alpha\otimes\beta$, the projection formula gives
\[
(\alpha\otimes\beta)\cup\Delta_{X*}\eta
=\tau_{2*}(\eta\alpha\beta).
\]
Applying this identity to \eqref{eq:formal-shift-HE}
yields \eqref{eq:divided-HE}.

The relation for $F_n$ follows by taking the graded
adjoint, with the order of the cohomology classes
reversed.
\end{proof}

\subsection{Quadratic relations for \(E\) and \(F\)}\label{sec:same-sign}
We prove \eqref{eq:main-EE} and \eqref{eq:main-FF} using the normal-ordered boundary derivatives, the Heisenberg relations, and Proposition~\ref{prop:Chern-shifts}. We reduce the positive relation to polynomial mode identities, check the lowest one explicitly, and propagate it to all indices. The negative relation then follows by adjoint.

\subsubsection{Transfer property }
The transfer property was introduced in \cite[Definition~3.16]{LQW1} and established there for commutators involving higher derivatives of Nakajima operators. We verify the corresponding property for the operators \(E_k\) below.

For homogeneous $\gamma$, set
\begin{equation}\label{eq:R-def}
R_{a,b}(\gamma)=[E_a(1_X),E_b(\gamma)].
\end{equation}

\begin{lemma}\label{lem:EE-transfer}
For all homogeneous $\alpha,\beta$,
\begin{equation}\label{eq:EE-transfer}
[E_a(\alpha),E_b(\beta)]=R_{a,b}(\alpha\beta),\qquad
R_{b,a}=-R_{a,b}.
\end{equation}
Equivalently, for $\theta\in H^*(X\times X)$,
\[
(E_aE_b)(\theta)-(E_bE_a)(\sigma_X^*\theta)
=R_{a,b}(\Delta_X^*\theta).
\]
In particular, the difference vanishes when $\theta$ is replaced by $(K_1-K_2)\theta$.
\end{lemma}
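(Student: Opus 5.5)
The plan is to show that $[E_a(\alpha),E_b(\beta)]$ depends on $(\alpha,\beta)$ only through the product $\alpha\beta$, by induction on $a+b$ using Proposition~\ref{prop:Chern-shifts}.

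\emph{Base case $a=b=0$.} Here $[\mathfrak a_{-1}(\alpha),\mathfrak a_{-1}(\beta)]=0$ by \eqref{eq:Heisenberg}, so both sides vanish.

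\emph{Raising indices.} I would avoid $\mathfrak d$ directly, since $[\mathfrak d,E_j(\beta)]=E_{j+1}(\beta)$ does not move cohomology classes. Instead I would use $[r!\mathfrak G_r(\eta),E_j(\beta)]=E_{j+r}(\eta\beta)$ with $\eta=1_X$. The cleaner route is to take $\eta$ general. Apply $\mathrm{ad}(\mathfrak G_1(\eta))$ to $[E_a(\alpha),E_b(\beta)]$ and use the graded Jacobi identity. This gives
\[
[E_{a+1}(\eta\alpha),E_b(\beta)]+(-1)^{\bar\eta\bar\alpha}[E_a(\alpha),E_{b+1}(\eta\beta)].
\]
Taking $\eta=1_X$ and using $[\mathfrak d,\cdot]$ with $\mathfrak G_1(1_X)=\mathfrak d$ yields
\[
R'_{a,b}=R_{a+1,b}+R_{a,b+1},
\]
where $R'$ denotes the derivative. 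This alone does not separate the two terms, so the induction must also use the $W$-algebra structure.

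\emph{Main step.} I would pass to normal-ordered expansions. By \eqref{eq:E-J}, each $E_k(\gamma)$ is a sum of terms $c_\nu\,\mathfrak a_\nu(\tau_*(\epsilon\gamma))/\nu!$ with $\epsilon\in\{1,e_X,K_X,K_X^2\}$, that is, single small-diagonal insertions. The commutator of two such monomials $\mathfrak a_\nu(\tau_*(\epsilon\alpha))$ and $\mathfrak a_\mu(\tau_*(\epsilon'\beta))$ is computed by contracting pairs of modes with \eqref{eq:Heisenberg}. Each contraction uses $\int_X$ of a product of Künneth components. For single small-diagonal insertions, a contraction merges the two diagonals into one small diagonal carrying $\epsilon\epsilon'\alpha\beta$; this is the standard computation of \cite[Lemma~3.2]{LQW}. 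Multiple contractions add factors of $e_X$. The result is therefore a sum of single-diagonal normal-ordered monomials in $\tau_*(\epsilon\epsilon' e_X^j\alpha\beta)$, with coefficients independent of $\alpha,\beta$.

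The point to check carefully is the sign bookkeeping: for odd classes, the Koszul signs from the Künneth decompositions must combine into exactly the sign implicit in the product $\alpha\beta$. This is the main obstacle, and I would handle it by citing the transfer-property framework of \cite[Definition~3.16]{LQW1}, where commutators of normal-ordered single-diagonal expressions are shown to depend only on $\alpha\beta$. With this, $[E_a(\alpha),E_b(\beta)]=\Psi_{a,b}(\alpha\beta)$ for a universal operator-valued map $\Psi_{a,b}$. Setting $\alpha=1_X$ gives $\Psi_{a,b}=R_{a,b}$.

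\emph{Consequences.} Antisymmetry follows from $[E_b(1_X),E_a(\gamma)]=-(-1)^{0}[E_a(\gamma),E_b(1_X)]=-R_{a,b}(\gamma\cdot 1_X)$, which gives $R_{b,a}=-R_{a,b}$. The $\theta$-formulation follows by linearity over Künneth components, since $\Delta_X^*(\alpha\otimes\beta)=\alpha\beta$. Finally, $\Delta_X^*\bigl((K_1-K_2)\theta\bigr)=(K_X-K_X)\Delta_X^*\theta=0$, so the difference vanishes when $\theta$ is replaced by $(K_1-K_2)\theta$.
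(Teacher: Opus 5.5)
Your proof is correct, but it differs from the paper's. The paper never contracts two multi-mode monomials against each other. It takes the entire column $b=0$ as base case, quoting \cite[Proposition~3.17]{LQW1} for the transfer property of $[E_a(\alpha),E_0(\beta)]=[\mathfrak a_{-1}^{(a)}(\alpha),\mathfrak a_{-1}(\beta)]$ for every $a$. This is the one-mode case of your own observation: commuting a single-diagonal monomial with $\mathfrak a_{-1}(\beta)$ just deletes a mode $1$ and multiplies the inserted class by $\beta$. It then inducts on $b$ using $[E_a(\alpha),E_{b+1}(\beta)]=[\mathfrak d,[E_a(\alpha),E_b(\beta)]]-[E_{a+1}(\alpha),E_b(\beta)]$. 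So your reason for abandoning $\mathfrak d$ is backwards. Because $\ad\mathfrak d$ does not touch the inserted class, it preserves the property of depending only on $\alpha\beta$. The obstruction you found, that the derivative controls only $R_{a+1,b}+R_{a,b+1}$, disappears once the hypothesis is assumed for all $a$ at fixed $b$ rather than only at $(0,0)$. Your route instead needs Wick contractions with any number of contracted pairs between two normal-ordered single-diagonal monomials, together with the Koszul signs for odd classes. That is where all the content of your argument lies, and you delegate it to a citation. In exchange, it treats all $(a,b)$ at once without induction. It also yields more: the commutator is itself a combination of single-diagonal monomials in $\tau_*(\epsilon\epsilon'e_X^{j}\alpha\beta)$. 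And it applies verbatim to any pair of families with such expansions, e.g.\ $E_a$ against $F_b$, whose transfer property the paper also invokes later. Your derivation of the antisymmetry, the $\theta$-form and the $(K_1-K_2)$ statement matches the paper's.
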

\begin{proof}
By \cite[Proposition~3.17]{LQW1}, the transfer property
holds for $[E_a(\alpha),E_0(\beta)]$ for every $a\geq0$.
Since $E_{b+1}=[\mathfrak d,E_b]$, the Jacobi identity gives
\[
\begin{aligned}
[E_a(\alpha),E_{b+1}(\beta)]
={}&[\mathfrak d,[E_a(\alpha),E_b(\beta)]]\\
&-[E_{a+1}(\alpha),E_b(\beta)].
\end{aligned}
\]
Induction on $b$ therefore proves the transfer property
for all $a,b\geq0$.

The identity $R_{b,a}=-R_{a,b}$ follows from
super-skew-symmetry. The remaining assertions follow
by linearity from
$\Delta_X^*(\alpha\otimes\beta)=\alpha\beta$
and $\Delta_X^*(K_1-K_2)=0$.
\end{proof}

For $\tau_{2*}\gamma=\sum_i\gamma_i'\otimes\gamma_i''$, write
\begin{equation}\label{eq:diag-composition}
(E_aE_b)_\Delta(\gamma)=\sum_iE_a(\gamma_i')E_b(\gamma_i'').
\end{equation}
In the product convention of Subsection~\ref{subsec:current-conventions}, this is simply $(E_aE_b)(\tau_{2*}\gamma)$. The graded cocommutativity of $\tau_{2*}$ and Lemma~\ref{lem:EE-transfer} imply
\begin{equation}\label{eq:diag-difference}
(E_aE_b)_\Delta(\gamma)-(E_bE_a)_\Delta(\gamma)=R_{a,b}(e_X\gamma).
\end{equation}
Indeed, after the graded permutation the difference is the sum of supercommutators, whose cohomology class is
$\sum_i\gamma_i'\gamma_i''=\Delta_X^*\Delta_{X*}\gamma=e_X\gamma$.

\subsubsection{The polynomial mode relation}
Recall the kernels \eqref{eq:zeta}--\eqref{eq:tilde-zeta}. For $\theta\in H^*(X\times X)$, set
\begin{equation}\label{eq:EE-Q-current}
\begin{aligned}
\mathcal Q_n(z,w;\theta)
:={}&
(E_{n+1}(z)E_n(w))
\bigl(\theta\cup\widetilde\zeta_-(w-z)\bigr)
\\
&-
(E_{n+1}(w)E_n(z))
\bigl(
\sigma_X^*
(\theta\cup\widetilde\zeta_+(z-w))
\bigr).
\end{aligned}
\end{equation}
The positive relation \eqref{eq:main-EE} means
\[
[(z-w)\mathcal Q_n(z,w;\theta)]_{z<0,w<0}=0.
\]
Thus we clear the denominator before projecting. Put $s=z-w$.
Direct expansion gives
\begin{align}
s\widetilde\zeta_+(s)
&=s^3+(K_1-K_2)s^2-K_1K_2s+[\Delta_X](s-K_1),
\label{eq:kernel-clear-plus}\\
s\widetilde\zeta_-(-s)
&=s^3+(K_1-K_2)s^2-K_1K_2s+[\Delta_X](s+K_1).
\label{eq:kernel-clear-minus}
\end{align}
This is a polynomial identity in $z,w$, unlike the expansion for
$z\gg w$ used in the Cartan exchange relations.

Take $\theta=\alpha\otimes\beta$ with homogeneous $\alpha,\beta$, and
write
\[
\gamma=\alpha\beta.
\]
By Lemma~\ref{lem:EE-transfer}, the difference of the two ordered
products depends on $\alpha$ and $\beta$ only through their cup product:
\[
(E_aE_b)(\alpha\otimes\beta)
-
(E_bE_a)\bigl(\sigma_X^*(\alpha\otimes\beta)\bigr)
=
R_{a,b}(\gamma).
\]
Consequently, the term proportional to $K_1-K_2$ in
\eqref{eq:kernel-clear-plus}--\eqref{eq:kernel-clear-minus} vanishes,
because
\[
\Delta_X^*(K_1-K_2)=0.
\]
The factor $K_1K_2$ restricts to $K_X^2$ on the diagonal and hence
contributes
\[
-s\sum_{a,b\geq0}
R_{a,b}(K_X^2\gamma)z^{-a-1}w^{-b-1}.
\]
On the other hand, the terms containing $[\Delta_X]s$ give, by
\eqref{eq:diag-difference},
\[
s\sum_{a,b\geq0}
R_{a,b}(e_X\gamma)z^{-a-1}w^{-b-1}.
\]
Finally, the two terms containing $[\Delta_X]K_1$ have the same sign
after taking the difference in \eqref{eq:EE-Q-current}, and give
\[
\sum_{a,b\geq0}
\left(
(E_aE_b)_\Delta(K_X\gamma)
+
(E_bE_a)_\Delta(K_X\gamma)
\right)
z^{-a-1}w^{-b-1}.
\]

Restricting every operator on the right to source $n$, we obtain
\begin{equation}\label{eq:EE-Q-expanded}
\begin{aligned}
s\mathcal Q_n(z,w;\alpha\otimes\beta)
={}&
s^3\sum_{a,b\geq0}
R_{a,b}(\gamma)z^{-a-1}w^{-b-1}
\\
&+
s\sum_{a,b\geq0}
R_{a,b}\bigl((e_X-K_X^2)\gamma\bigr)
z^{-a-1}w^{-b-1}
\\
&+
\sum_{a,b\geq0}
\left(
(E_aE_b)_\Delta(K_X\gamma)
+
(E_bE_a)_\Delta(K_X\gamma)
\right)
z^{-a-1}w^{-b-1}.
\end{aligned}
\end{equation}

We now define $Q_{a,b}(\gamma)$ to be the coefficient of
$z^{-a-1}w^{-b-1}$ in the right-hand side of
\eqref{eq:EE-Q-expanded}. Since
\[
s^3=(z-w)^3=z^3-3z^2w+3zw^2-w^3,
\]
and
\[
s=z-w,
\]
we obtain
\begin{align}
Q_{a,b}(\gamma):={}&
R_{a+3,b}(\gamma)
-3R_{a+2,b+1}(\gamma)
+3R_{a+1,b+2}(\gamma)
-R_{a,b+3}(\gamma)
\notag\\
&+
R_{a+1,b}\bigl((e_X-K_X^2)\gamma\bigr)
-
R_{a,b+1}\bigl((e_X-K_X^2)\gamma\bigr)
\notag\\
&+
(E_aE_b)_\Delta(K_X\gamma)
+
(E_bE_a)_\Delta(K_X\gamma).
\label{eq:EE-mode}
\end{align}
Consequently, \eqref{eq:main-EE} is equivalent to
\begin{equation}\label{eq:EE-mode-vanishing}
Q_{a,b}(\gamma)=0,\qquad a,b\geq0,
\end{equation}
for every homogeneous $\gamma\in H^*(X)$.


\subsubsection{The lowest relation}
\begin{lemma}\label{lem:EE-base}
For every homogeneous $\gamma$, $Q_{0,0}(\gamma)=0$.
\end{lemma}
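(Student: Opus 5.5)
The plan is to reduce $Q_{0,0}(\gamma)$ to finitely many explicit commutators of low modes and compare them in the normal-ordered basis. By Lemma~\ref{lem:EE-transfer}, $R_{b,a}=-R_{a,b}$, and $(E_0E_0)_\Delta(K_X\gamma)=\mathfrak a_{-1}\mathfrak a_{-1}(\tau_{2*}(K_X\gamma))$. Hence \eqref{eq:EE-mode} at $a=b=0$ collapses to
\[
Q_{0,0}(\gamma)=2R_{3,0}(\gamma)-6R_{2,1}(\gamma)+2R_{1,0}\bigl((e_X-K_X^2)\gamma\bigr)+2\,\mathfrak a_{-1}\mathfrak a_{-1}(\tau_{2*}(K_X\gamma)).
\]
It therefore suffices to compute $R_{1,0}$, $R_{2,1}$ and $R_{3,0}$ explicitly. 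The first is immediate from \eqref{eq:lowest-EF} and \eqref{eq:La}: $R_{1,0}(\eta)=[-\mathfrak L_{-1}(1_X),\mathfrak a_{-1}(\eta)]=-\mathfrak a_{-2}(\eta)$.

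For the other two I will use \eqref{eq:E-J} to write $E_2$ and $E_3$ as $-\mathfrak J^2_{-1}$, $-\mathfrak J^3_{-1}$ plus their canonical-class corrections. The coefficients are read off from Proposition~\ref{prop:recursions} at total weight $n=1$:
\begin{itemize}
\item $f^{(2)}_{K_X}$ on length-two families, with $\fk{2}{(-(j+1),j)}=j$ by \eqref{eq:small-f};
\item $f^{(3)}_{K_X}$ on length-three families, obtained from one more application of \eqref{eq:rec-K}, e.g.\ $3j-1$ in the case used in Proposition~\ref{prop:G2};
\item $\fkk{3}{(-(j+1),j)}=j^2$, while $f^{(2)}_{K_X^2}$ vanishes at a singleton.
\end{itemize}
The pure $\mathfrak J$ parts are handled by \eqref{eq:W-commutator}. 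One gets $[\mathfrak J^3_{-1},\mathfrak J^0_{-1}]=3\mathfrak J^2_{-2}+(\text{Euler term})$ and $[\mathfrak J^2_{-1},\mathfrak J^1_{-1}]=\mathfrak J^2_{-2}+(\text{Euler term})$. The leading parts thus enter as $2\cdot3-6\cdot1=0$, so the $K_X$-free, $e_X$-free terms cancel at once. The $e_X$ contributions (the $\Omega$ terms, with $\mathfrak J^0_{-2}=-\mathfrak a_{-2}$) should cancel against $2R_{1,0}(e_X\gamma)=-2\mathfrak a_{-2}(e_X\gamma)$; this is a one-line check of the numbers $\Omega^{3,0}_{-1,-1}$ and $\Omega^{2,1}_{-1,-1}$.

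The remaining work sorts terms by the class $K_X\gamma$ or $K_X^2\gamma$.

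\emph{$K_X\gamma$ terms.} These come from commuting the $K_X$-corrections of $E_3(1_X)$ with $\mathfrak a_{-1}(\gamma)$, where a single $+1$ mode is removed via \eqref{eq:Heisenberg}. They also come from commuting the $K_X$-correction of $E_2(1_X)$ with $-\mathfrak L_{-1}(\gamma)$, which shifts one mode down via \eqref{eq:mode-shift} and \eqref{eq:La}. Here the transfer property lets me place all cohomology on one diagonal insertion. The outputs are monomials $\mathfrak a_\nu(\tau_*(K_X\gamma))$ with $|\nu|=-2$ and $\ell(\nu)=2$. Their coefficients must add to $-2$ on $\mathfrak a_{-1}\mathfrak a_{-1}$ and cancel on every $\mathfrak a_{-(j+2)}\mathfrak a_j$.

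\emph{$K_X^2\gamma$ terms.} These come from three sources:
\begin{itemize}
\item the $K_X^2$-correction of $E_3$ against $\mathfrak a_{-1}$;
\item the $K_X$-correction of $E_2(1_X)$ against the $K_X$-term of $\mathfrak L_{-1}$, which is absent since $\mathfrak L_{-1}$ has none, but the diagonal reorder can produce $K_X\cdot K_X$;
\item $-2R_{1,0}(K_X^2\gamma)=2\mathfrak a_{-2}(K_X^2\gamma)$.
\end{itemize}
They should cancel using $\fkk{3}{(-2,1)}=1$.

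The main obstacle is the bookkeeping of these $K_X$ and $K_X^2$ coefficients. In particular, the length-three coefficients $f^{(3)}_{K_X}(\nu)$ for all $\nu$ with $|\nu|=-1$ (including mixed-sign families such as $(-a,-b,a+b-1)$ and $(-a,b,c)$) must be evaluated from Theorem~\ref{thm:fK} or Corollary~\ref{cor:fK-polynomial} with $k=3$, $n=1$. After removal of a $+1$ mode, I will check that these match the $\mathfrak L_{-1}$-shifted $f^{(2)}_{K_X}$ coefficients. I would first verify this on the lowest monomials $\mathfrak a_{-1}\mathfrak a_{-1}$ and $\mathfrak a_{-3}\mathfrak a_1$, then treat general $j$ using the closed forms $\fk{2}{(-(j+1),j)}=j$ and the $k=3$ specialization of \eqref{eq:fK-polynomial}.
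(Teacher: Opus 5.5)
Your proposal is correct. After the common first step, using $R_{b,a}=-R_{a,b}$ to halve $Q_{0,0}$, it takes a different route from the paper.

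\emph{The paper's route.} The paper never expands $E_3$. Differentiating $R_{0,1}=\mathfrak a_{-2}$ and using $R_{1,1}=0$ gives $R_{0,3}=\mathfrak a_{-2}^{(2)}-R_{1,2}$. Hence $\tfrac12Q_{0,0}=4R_{1,2}-\mathfrak a_{-2}^{(2)}-\mathfrak a_{-2}((e_X-K_X^2)\gamma)+\mathfrak a_{-1}^2(\tau_{2*}(K_X\gamma))$. The check then needs only two inputs:
\begin{itemize}
\item the $\mathfrak L_{-1}$ mode shift of $E_2$;
\item the total-weight-two, length-two data $\fk{2}{(-1,-1)}=2$, $\fk{2}{(-j-2,j)}=4(j+1)$, $\fkk{2}{(-2)}=1$, and the Euler singleton coefficient $-1$ of $\mathfrak a_{-2}^{(2)}$.
\end{itemize}

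\emph{Your route.} You keep $R_{3,0}$ and expand $E_3(1_X)$ via \eqref{eq:E-J}. This costs you the LQW bracket \eqref{eq:W-commutator}, which is legitimate here because $(3,0)$ and $(2,1)$ are not exceptional pairs. Moreover $\Omega^{3,0}_{-1,-1}=\Omega^{2,1}_{-1,-1}=-6$ gives exactly the Euler cancellation you predict. You also need total-weight-one, length-three $K_X$ coefficients, but far fewer than you fear. Only monomials with a part $+1$ survive the bracket with $\mathfrak a_{-1}(\gamma)$. So you need just $\fk{3}{(-1,-1,1)}=2$ (the case $j=1$ of $3j-1$) and $\fk{3}{(-(j+2),j,1)}=3(j+1)$. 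Both follow quickly from \eqref{eq:fK-polynomial} with $k=3$, $n=1$; the general families $(-a,b,c)$ are never needed. Inside $2R_{3,0}$ these give $-2\mathfrak a_{-1}^2$ and $-6(j+1)\mathfrak a_{-j-2}\mathfrak a_j$. They cancel $2\mathfrak a_{-1}^2(\tau_{2*}(K_X\gamma))$ and the $+6(j+1)\mathfrak a_{-j-2}\mathfrak a_j$ coming from $-6R_{2,1}$.

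\emph{Comparison.} Your route stays entirely inside the weight-one tower $E_k=\mathfrak a_{-1}^{(k)}$. The paper's route instead trades the third derivative and the $W$-algebra bracket for a second derivative of $\mathfrak a_{-2}$ and an elementary mode-shift count.

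\emph{One correction.} Your second listed source of $K_X^2\gamma$ terms is empty. Bracketing with $\mathfrak L_{-1}$ only shifts $q\mapsto q-1$. Positive modes stay positive or become $\mathfrak a_0=0$, so there is no reordering and no product $K_X\cdot K_X$ arises. The $K_X^2$ cancellation is therefore just two terms: $-2\fkk{3}{(-2,1)}\mathfrak a_{-2}(K_X^2\gamma)=-2\mathfrak a_{-2}(K_X^2\gamma)$ from $2R_{3,0}$, against $+2\mathfrak a_{-2}(K_X^2\gamma)$ from $-2R_{1,0}(K_X^2\gamma)$.
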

\begin{proof}
Equations \eqref{eq:derivative-expansion} and \eqref{eq:small-f} yield
\begin{equation}\label{eq:E2-EE}
E_2(\gamma)=2\sum_{\substack{\ell(\nu)=3\\|\nu|=-1}}
\frac{\mathfrak a_\nu(\tau_*\gamma)}{\nu!}
+\sum_{j\geq1}j\,\mathfrak a_{-j-1}\mathfrak a_j(\tau_{2*}(K_X\gamma)).
\end{equation}
There is no Euler or $K_X^2$ term here: their possible singleton mode is $-1$ and both coefficients are zero. Since $E_1(1_X)=-\mathfrak L_{-1}(1_X)$ shifts a mode $q$ to $q-1$ with coefficient $q$, it introduces no new contraction. On a target triple of total mode $-2$, the sum of the three coefficients $q_i+1$ is one. The first term of \eqref{eq:E2-EE} therefore keeps coefficient two. On the second term, the contributions to $\mathfrak a_{-j-2}\mathfrak a_j$ have coefficient
$-j(j+1)+(j+1)^2=j+1$.
Thus
\begin{equation}\label{eq:R12-explicit}
R_{1,2}(\gamma)=2\sum_{\substack{\ell(\nu)=3\\|\nu|=-2}}
\frac{\mathfrak a_\nu(\tau_*\gamma)}{\nu!}
+\sum_{j\geq1}(j+1)\mathfrak a_{-j-2}\mathfrak a_j(\tau_{2*}(K_X\gamma)).
\end{equation}
At total weight $n=2$, the recursions give
\[
\fk{2}{(-1,-1)}=2,\qquad
\fk{2}{(-j-2,j)}=4(j+1),\qquad
\fkk{2}{(-2)}=1.
\]
The leading second-derivative coefficient is $2^2\cdot2!=8$, and the Euler singleton coefficient is
$-8(4-1)/24=-1$. Hence the full expansion is
\begin{align}
\mathfrak a_{-2}^{(2)}(\gamma)={}&8\sum_{\substack{\ell(\nu)=3\\|\nu|=-2}}
\frac{\mathfrak a_\nu(\tau_*\gamma)}{\nu!}
+4\sum_{j\geq1}(j+1)\mathfrak a_{-j-2}\mathfrak a_j(\tau_{2*}(K_X\gamma))\notag\\
&+\mathfrak a_{-1}^2(\tau_{2*}(K_X\gamma))
+\mathfrak a_{-2}((K_X^2-e_X)\gamma).
\label{eq:a-minus2-second}
\end{align}
The coefficient of the repeated-mode term is one because its divided-monomial denominator is $2!$.

Now $R_{0,1}(\gamma)=\mathfrak a_{-2}(\gamma)$ and
$R_{1,1}(\gamma)=0$ by the Heisenberg--Virasoro relations. Differentiation gives
\begin{equation}\label{eq:R03-reduction}
R_{0,2}(\gamma)=\mathfrak a_{-2}'(\gamma),\qquad
R_{0,3}(\gamma)=\mathfrak a_{-2}^{(2)}(\gamma)-R_{1,2}(\gamma).
\end{equation}
Using antisymmetry and $(E_0E_0)_\Delta(K_X\gamma)=\mathfrak a_{-1}^2(\tau_{2*}(K_X\gamma))$, we obtain
\begin{align}
\tfrac12Q_{0,0}(\gamma)
={}&4R_{1,2}(\gamma)-\mathfrak a_{-2}^{(2)}(\gamma)
-\mathfrak a_{-2}((e_X-K_X^2)\gamma)
+\mathfrak a_{-1}^2(\tau_{2*}(K_X\gamma))\notag\\
={}&0
\label{eq:EE-base-cancellation}
\end{align}
by \eqref{eq:R12-explicit}--\eqref{eq:a-minus2-second}. This displays the cancellation of the length-three, length-two, and singleton contributions separately. It is valid for odd external $\gamma$ as well, since all operators used for the shifts involve the even class $1_X$.
\end{proof}

\subsubsection{Propagation to all levels}

\begin{theorem}\label{thm:EE-independent}
For all $a,b\geq0$ and homogeneous $\gamma$, one has $Q_{a,b}(\gamma)=0$.
Consequently, the  relations \eqref{eq:main-EE} and \eqref{eq:main-FF}
hold in every source sector in which they are defined.
\end{theorem}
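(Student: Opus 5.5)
The plan is to propagate the base case $Q_{0,0}(\gamma)=0$ of Lemma~\ref{lem:EE-base} to all $(a,b)$ by applying $\ad\mathfrak d$ and the Chern character operators $r!\mathfrak G_r(\eta)$, using Proposition~\ref{prop:Chern-shifts}. The relation for $F$ will then follow by taking graded adjoints.

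\emph{Step 1: the action of $\ad\mathfrak d$.} We commute $\mathfrak d$ with $Q_{a,b}(\gamma)$. By Proposition~\ref{prop:Chern-shifts} and the Jacobi identity,
\[
[\mathfrak d,R_{a,b}(\gamma)]=R_{a+1,b}(\gamma)+R_{a,b+1}(\gamma),\qquad
[\mathfrak d,(E_aE_b)_\Delta(\gamma)]=(E_{a+1}E_b)_\Delta(\gamma)+(E_aE_{b+1})_\Delta(\gamma).
\]
For $R$ we use that $\mathfrak d$ commutes past $E_a(1_X)$ with no insertion. For the diagonal product we use that $\mathfrak d$ is even. Every term of \eqref{eq:EE-mode} is built from a pair of indices $(a,b)$ through a polynomial in the shift operators $z,w$. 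Hence
\[
[\mathfrak d,Q_{a,b}(\gamma)]=Q_{a+1,b}(\gamma)+Q_{a,b+1}(\gamma).
\]
In current language, $\ad\mathfrak d$ acts as multiplication by $z+w$ followed by projection. This is legitimate by \eqref{eq:w-projection}, because the clearing factor $s$ and the kernels are polynomial.

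\emph{Step 2: the action of $\mathfrak G_1$ with an insertion.} Step 1 only gives the sums $Q_{a+1,b}+Q_{a,b+1}$. We therefore need a second operator that separates the two indices. For this we take $\operatorname{ad}$ of a Chern character operator that sees the cohomology class of only one factor. Write $\theta=\alpha\otimes\beta$ and use $2\mathfrak G_2(1_X)$. Alternatively, the cleanest route is to use $\mathfrak G_r$ and redistribute classes by means of $\tau_{2*}$.

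Applying $[r!\mathfrak G_r(\eta),\cdot]$ to $Q_{a,b}(\alpha\otimes\beta)$ gives a sum of two shifted terms:
\[
Q_{a+r,b}\bigl((\eta\otimes1)\theta\bigr)+Q_{a,b+r}\bigl((1\otimes\eta)\theta\bigr).
\]
We take $\eta=1_X$ with $r=2$, and compare with the square of $\ad\mathfrak d$. This gives
\[
(\ad\mathfrak d)^2Q_{a,b}-[2\mathfrak G_2(1_X),Q_{a,b}]=2Q_{a+1,b+1}.
\]
Together with Step 1, this yields the three combinations $Q_{a+1,b}+Q_{a,b+1}$, $Q_{a+1,b+1}$, and, from $\mathfrak G_2$ itself, $Q_{a+2,b}+Q_{a,b+2}$. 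These do not isolate $Q_{a+1,b}$ alone, so we also use the symmetry $Q_{a,b}\leftrightarrow Q_{b,a}$. That symmetry comes from the antisymmetry $R_{b,a}=-R_{a,b}$ in Lemma~\ref{lem:EE-transfer} together with the symmetric $K_X$ term in \eqref{eq:EE-mode}, and gives $Q_{b,a}=-Q_{a,b}$.

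\emph{Step 3: induction.} We induct on $N=a+b$. We only need the antisymmetric parts, $Q_{a,b}=-Q_{b,a}$, so $Q_{a,a}=0$ automatically. For $N=1$, Step 1 applied to $Q_{0,0}$ gives $Q_{1,0}+Q_{0,1}=0$, which is already implied by antisymmetry. So we need a genuinely new input, and this is the main obstacle.

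The fix I would try first is to start the induction from $Q_{0,0}$ but act with the operators $[r!\mathfrak G_r(\eta),\cdot]$ for \emph{different} insertions on the two factors. The transfer property makes the dependence only through $\gamma$, up to the diagonal $K_X$ term. If that fails, I would verify $Q_{1,0}(\gamma)=0$ directly, as in Lemma~\ref{lem:EE-base}, using $E_3$ from Theorem~\ref{thm:fK} and \eqref{eq:small-f}.

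Once $Q_{a,b}=0$ is known for $a+b=N$, together with one extra independent relation on level $N+1$ (namely $Q_{N+1,0}$), we proceed as follows. The chain $Q_{a+1,b}+Q_{a,b+1}=0$ starting from $Q_{N+1,0}$ determines all $Q$ on level $N+1$. The same device as in Step 2, $[(N+1)!\mathfrak G_{N+1}(1_X),Q_{0,0}]=Q_{N+1,0}+Q_{0,N+1}$, combined with antisymmetry, does not give $Q_{N+1,0}$. So the extra relation must come from lowest-weight data: I would show $Q_{a,0}=0$ by induction on $a$ via commutation with $\mathfrak a_1(1_X)$ and Theorem~\ref{thm:Cartan-comparison}.

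Finally, \eqref{eq:main-FF} follows from \eqref{eq:main-EE} by the graded adjoint \eqref{eq:lowest-EF}, with $\sigma_X^*$ recording the reversal of the cohomology classes.
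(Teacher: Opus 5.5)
You have the right tool: the derivations $\partial_r=\ad(r!\mathfrak G_r(1_X))$, with $\partial_1=\ad\mathfrak d$, satisfy $\partial_rQ_{a,b}=Q_{a+r,b}+Q_{a,b+r}$. This is exactly how the paper propagates Lemma~\ref{lem:EE-base}. However, your proposal stalls because of a sign error, and as written it does not prove the theorem.

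\textbf{The sign error.} You assert $Q_{b,a}=-Q_{a,b}$, but in fact $Q_{b,a}=Q_{a,b}$. In \eqref{eq:EE-mode} the $R$-terms enter with the coefficient patterns $(1,-3,3,-1)$ and $(1,-1)$, which come from the odd polynomials $s^3$ and $s$ in $s=z-w$. These patterns change sign when $a$ and $b$ are interchanged. Combined with $R_{b,a}=-R_{a,b}$, each $R$-line of $Q_{a,b}$ is therefore symmetric. The last line, $(E_aE_b)_\Delta(K_X\gamma)+(E_bE_a)_\Delta(K_X\gamma)$, is manifestly symmetric. A quick sanity check: if $Q$ were antisymmetric, $Q_{0,0}=0$ would be trivial. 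Lemma~\ref{lem:EE-base} instead needs a genuine cancellation of the length-three, length-two and singleton terms. Your remark that $Q_{a,a}=0$ ``automatically'' is false for the same reason.

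\textbf{What the correct symmetry gives.} With $Q_{b,a}=Q_{a,b}$, your ``main obstacle'' disappears. The identity you wrote down and then discarded, $[(N+1)!\mathfrak G_{N+1}(1_X),Q_{0,0}]=Q_{N+1,0}+Q_{0,N+1}$, already finishes the argument:
\begin{itemize}
\item It reads $\partial_rQ_{0,0}=2Q_{r,0}$, so $Q_{r,0}=0$ for every $r\geq1$.
\item Applying $\partial_b$ to $Q_{a,0}=0$ then gives $0=Q_{a+b,0}+Q_{a,b}=Q_{a,b}$.
\end{itemize}
This is precisely the paper's proof. It needs no induction on $a+b$, no different insertions on the two factors, and no separate check of $Q_{1,0}$.

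\textbf{The proposed remedies.} The alternatives you offer in Step~3 are not carried out (``I would try'', ``if that fails''). The last one is insufficient as stated. Vanishing of the commutator with the single mode $\mathfrak a_1(1_X)$ does not force an operator to vanish: for instance, $\mathfrak a_{-2}(\alpha)$ commutes with $\mathfrak a_1(1_X)$ and shifts the number of points by two, just like $Q_{a,0}$. That route would therefore need a further argument.

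Your final step, deducing \eqref{eq:main-FF} from \eqref{eq:main-EE} by graded adjoints with $\sigma_X^*$ recording the interchange of classes, is fine and agrees with the paper.
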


\begin{proof}
Put $\partial_r=\ad(r!\mathfrak G_r(1_X))$. Equation \eqref{eq:Chern-shifts} and the derivation rule imply
\[
\partial_rR_{a,b}=R_{a+r,b}+R_{a,b+r},\qquad
\partial_r(E_aE_b)_\Delta=(E_{a+r}E_b)_\Delta+(E_aE_{b+r})_\Delta.
\]
Consequently
\begin{equation}\label{eq:Q-propagation}
\partial_rQ_{a,b}=Q_{a+r,b}+Q_{a,b+r},\qquad Q_{b,a}=Q_{a,b}.
\end{equation}
The second equality follows by antisymmetry of $R$ and symmetry of the last line of \eqref{eq:EE-mode}. Apply $\partial_r$ to the identity of Lemma~\ref{lem:EE-base}. It gives $2Q_{r,0}=0$, hence $Q_{r,0}=0$. Applying $\partial_b$ to $Q_{a,0}=0$ then gives
$0=Q_{a+b,0}+Q_{a,b}=Q_{a,b}$.
This proves the statement for all mode indices and all cohomology classes $\gamma$, without an induction hypothesis on unknown coefficients.

By construction, \(Q_{a,b}(\gamma)\) is the coefficient of \(z^{-a-1}w^{-b-1}\) in the denominator-cleared \(EE\) current relation. Equations \eqref{eq:kernel-clear-plus}--\eqref{eq:kernel-clear-minus}, together with the transfer identity and \eqref{eq:diag-difference}, identify the two formulations on both sides of \eqref{eq:main-EE}. Hence \(Q_{a,b}=0\) for all \(a,b\) proves the \(EE\) relation. The \(FF\) relation follows by taking the graded adjoint, which interchanges the two cohomology classes via \(\sigma_X^*\).
\end{proof}

\subsection{The Serre relations}\label{sec:Serre}
It remains to prove the Serre relations. As in the quadratic argument,
the Chern shifts extend a lowest-mode identity to all indices.

For homogeneous cohomology classes $\alpha_1,\alpha_2,\alpha_3$, let
\begin{equation}\label{eq:Koszul}
\epsilon(\sigma;\alpha_1,\alpha_2,\alpha_3)
=\prod_{\substack{i<j\\\sigma(i)>\sigma(j)}}
(-1)^{\bar\alpha_{\sigma(i)}\bar\alpha_{\sigma(j)}},\qquad \sigma\in S_3.
\end{equation}
The mode indices and their cohomology classes are permuted together.
\begin{theorem}\label{thm:Serre}
For $d_1,d_2,d_3,n\geq0$ and homogeneous $\alpha_1,\alpha_2,\alpha_3\in H^*(X)$,
\begin{equation}\label{eq:Serre}
\left.\sum_{\sigma\in S_3}\epsilon(\sigma;\alpha_1,\alpha_2,\alpha_3)
\left[E_{d_{\sigma(1)}}(\alpha_{\sigma(1)}),
\left[E_{d_{\sigma(2)}}(\alpha_{\sigma(2)}),
E_{d_{\sigma(3)}+1}(\alpha_{\sigma(3)})\right]\right]
\right|_{H^*(X^{[n]})}=0.
\end{equation}
The analogous identity holds with every $E$ replaced by $F$.
\end{theorem}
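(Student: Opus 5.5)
Write $S(d_1,d_2,d_3;\alpha_1,\alpha_2,\alpha_3)$ for the left side of \eqref{eq:Serre}. We follow the pattern of Theorem~\ref{thm:EE-independent}: first reduce the cohomological dependence by the transfer property, then propagate a lowest-mode identity with the Chern shifts $\partial_r=\ad(r!\mathfrak G_r(1_X))$.

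\emph{Step 1 (transfer).} By Lemma~\ref{lem:EE-transfer}, $[E_b(\beta),E_c(\gamma)]=R_{b,c}(\beta\gamma)$. The operators $R_{b,c}$ are obtained from $[E_a(1_X),E_0(\cdot)]$ by boundary differentiation, so they are normal-ordered expressions in Nakajima operators with one small-diagonal insertion. Hence the transfer argument of \cite[Proposition~3.17]{LQW1} applies again. Together with the Jacobi identity, it gives
\[
[E_a(\alpha),[E_b(\beta),E_c(\gamma)]]=[E_a(1_X),R_{b,c}(\alpha\beta\gamma)].
\]
Because the Koszul signs $\epsilon$ in \eqref{eq:Koszul} exactly compensate for permuting the classes, the whole sum depends only on the product $\alpha_1\alpha_2\alpha_3$. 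It therefore reduces to a statement for insertions $1_X,1_X,\gamma$. So it suffices to prove $S(d_1,d_2,d_3)(\gamma)=0$, symmetric in the $d_i$, as an operator-valued function of a single class.

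\emph{Step 2 (propagation).} Since $\partial_r$ is a derivation commuting with cup product, \eqref{eq:Chern-shifts} gives
\[
\partial_r S(d_1,d_2,d_3)=S(d_1+r,d_2,d_3)+S(d_1,d_2+r,d_3)+S(d_1,d_2,d_3+r).
\]
As in Theorem~\ref{thm:EE-independent}, three successive applications reduce everything to a single base case $S(0,0,0)$, by induction on $d_1+d_2+d_3$ with the symmetry in the $d_i$. First, $\partial_r S(0,0,0)=3S(r,0,0)$, so $S(r,0,0)=0$. Next, $\partial_b S(a,0,0)=S(a+b,0,0)+2S(a,b,0)$, which gives $S(a,b,0)=0$. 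Finally, $\partial_c S(a,b,0)$ yields $S(a,b,c)=0$.

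\emph{Step 3 (base case, the main obstacle).} We must show $\mathrm{Sym}_{d}[E_0,[E_0,E_1]]=0$ at $d=(0,0,0)$. This expands to $3\bigl([E_0,[E_0,E_1]]+[E_0,[E_1,E_0]]+[E_1,[E_0,E_0]]\bigr)$ up to the grouping dictated by the placement of the "$+1$". Here $E_0=\mathfrak a_{-1}$ and $E_1=-\mathfrak L_{-1}$. By \eqref{eq:La}, $[\mathfrak a_{-1},\mathfrak L_{-1}]$ is a multiple of $\mathfrak a_{-2}$, and $[\mathfrak a_{-1},\mathfrak a_{-2}]=0$ by \eqref{eq:Heisenberg}. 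Also $[E_0,E_0]$, i.e.\ $[\mathfrak a_{-1},\mathfrak a_{-1}]$, vanishes. Hence every term is zero.

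The delicate point is not this computation but Step~1. We must check that the Koszul signs and the placement of the shifted index $d_{\sigma(3)}+1$ are compatible with reducing to a single class. We also need that $R_{b,c}$ satisfies the transfer property in its commutator with $E_a$. If the latter is unclear, we would instead propagate directly with classes kept separate, using the colored versions of $\partial_r$ acting on one slot at a time, which \eqref{eq:Chern-shifts} also permits. Finally, the $F$ version follows by taking graded adjoints via \eqref{eq:lowest-EF} and \eqref{eq:graded-adjoint}, which reverses nesting and introduces only an overall sign.
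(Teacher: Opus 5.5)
Your route is correct but differs from the paper's. You reduce to insertions $1_X,1_X,\gamma$ by transfer, then propagate with $\partial_r=\ad(r!\mathfrak G_r(1_X))$ using the $\mathfrak S_3$-symmetry in $(d_1,d_2,d_3)$, as in Theorem~\ref{thm:EE-independent}; that three-step induction is fine.

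The paper avoids transfer altogether. It uses the commuting coloured derivations $\mathcal D_{r,\eta}=r!\,\ad\mathfrak G_r(\eta)$, which on a single generator satisfy $\mathcal D_{a,\alpha}\mathcal D_{b,\beta}E_j(\eta)=\mathcal D_{a+b,\alpha\beta}E_j(\eta)$. It applies $\mathcal D_{a,\alpha}\mathcal D_{b,\beta}\mathcal D_{c,\gamma}-\mathcal D_{a+b,\alpha\beta}\mathcal D_{c,\gamma}-\mathcal D_{a+c,\alpha\gamma}\mathcal D_{b,\beta}-\mathcal D_{b+c,\beta\gamma}\mathcal D_{a,\alpha}+2\mathcal D_{a+b+c,\alpha\beta\gamma}$ to $[E_0(1_X),[E_0(1_X),E_1(1_X)]]=0$. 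Every Leibniz term with two or three derivations on one slot cancels ($1-3+2=0$ for three). What remains is exactly the six Serre terms, with modes and classes attached at once. This is the working form of your fallback ``act on one slot at a time'', which a single $\ad$-derivation cannot do, and it needs nothing beyond \eqref{eq:Chern-shifts}.

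Your route instead needs the nested transfer property $[E_a(\alpha),R_{b,c}(\delta)]=[E_a(1_X),R_{b,c}(\alpha\delta)]$. Lemma~\ref{lem:EE-transfer} does not give this: it concerns a single commutator, and the Jacobi identity alone only returns double commutators of the same shape. The property does hold, and you should say why rather than re-cite the single-commutator result. Each $E_k(\gamma)$ is a combination of single-diagonal monomials $\mathfrak a_\lambda(\tau_*(c\gamma))$ with $c\in\{1,e_X,K_X,K_X^2\}$. A Heisenberg contraction glues two small diagonals into one carrying the product class, and re-normal-ordering only inserts powers of $e_X$. So iterated commutators depend on the classes only through their product.

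Two harmless slips. First, since all $d_i=0$, $S(0,0,0)(\gamma)=6[E_0(1_X),[E_0(1_X),E_1(\gamma)]]$; the terms $[E_0,[E_1,E_0]]$ and $[E_1,[E_0,E_0]]$ do not occur. Second, by \eqref{eq:graded-adjoint}, $[A,[B,C]]^\dagger=[A^\dagger,[B^\dagger,C^\dagger]]$, so taking adjoints preserves the nesting and the position of the shifted index rather than reversing it.
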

\begin{proof}
First suppose the cohomology classes are even. The commuting derivations
$\mathcal D_{r,\eta}=r!\ad\mathfrak G_r(\eta)$ satisfy
\[
\mathcal D_{r,\eta}E_j(\beta)=E_{j+r}(\eta\beta),\qquad
\mathcal D_{a,\alpha}\mathcal D_{b,\beta}E_j(\eta)
=\mathcal D_{a+b,\alpha\beta}E_j(\eta).
\]
The second statement is an identity on a single generator, not an identity of operations on arbitrary products. By \eqref{eq:La},
$[E_0(1_X),E_1(1_X)]=\mathfrak a_{-2}(1_X)$, whence
\begin{equation}\label{eq:lowest-Serre}
[E_0(1_X),[E_0(1_X),E_1(1_X)]]=0.
\end{equation}
Apply to this zero identity the operation
\begin{align}
&\mathcal D_{a,\alpha}\mathcal D_{b,\beta}\mathcal D_{c,\gamma}
-\mathcal D_{a+b,\alpha\beta}\mathcal D_{c,\gamma}
-\mathcal D_{a+c,\alpha\gamma}\mathcal D_{b,\beta}\notag\\
&\hspace{12mm}-\mathcal D_{b+c,\beta\gamma}\mathcal D_{a,\alpha}
+2\mathcal D_{a+b+c,\alpha\beta\gamma}.
\label{eq:Serre-polarization}
\end{align}
Leibniz expansion distributes three labelled derivations among three generator positions. Terms with exactly two on one position cancel against their merged derivation. Terms with all three on one position have coefficient $1-3+2=0$. The six assignments to distinct positions remain. The third position initially has index one, so they are exactly the six terms of \eqref{eq:Serre} with the innermost index raised by one. Repeated or zero shifts cause no problem because the derivations are labelled.

To allow odd cohomology classes, adjoin independent supercommuting variables $\theta_i$ of parity $\bar\alpha_i$, apply the even statement to $\theta_i\alpha_i$, and extract the coefficient of $\theta_1\theta_2\theta_3$ in that order. The Koszul signs are exactly \eqref{eq:Koszul}, up to one common sign that cancels. Finally,
$[A,[B,C]]^\dagger=[A^\dagger,[B^\dagger,C^\dagger]]$
by \eqref{eq:graded-adjoint}; taking adjoints proves the negative statement.
\end{proof}

\subsection{The affine Yangian action}\label{sec:action}
We now show that the operator relations established above define an affine Yangian action. There are two points to address. First, the parameters of the Yangian will be identified with the characteristic classes \(K_X\) and \(e_X\), which are not invertible. We therefore use a presentation that does not require division by these classes. Second, our operators depend on cohomology classes in \(H^*(X)\), so we must specify how their multiplication and commutators correspond to those of the Yangian. We discuss these two points in the following subsections.

\subsubsection{The presentation and its parameters}
We first specify the algebraic relations to be realized by the
geometric operators. The next subsection will define the corresponding
multiplication and bracket on $H^*(X)$-parametrized operators.

Put
\[
\Lambda=\mathbb Q[t_1,t_2]^{\mathfrak S_2}
=\mathbb Q[t,\varepsilon],
\qquad t=t_1+t_2,\qquad \varepsilon=t_1t_2,
\]
and set
\[
V(s)=\frac{-2}{s^3+(\varepsilon-t^2)s+\varepsilon t}.
\]
The series $V$ is expanded at infinity, so every
coefficient lies in $\Lambda$. Under this specialization, we have
\begin{equation}\label{eq:V-surface}
\left.V(s)\right|_{t=K_X,\,\varepsilon=e_X}
=-\frac{2}{s^3}
+\frac{2(e_X-K_X^2)}{s^5},
\end{equation}
since $K_Xe_X=0$ and $(e_X-K_X^2)^2=0$.
This agrees with the kernel obtained in
Proposition~\ref{prop:divided-exchange}. Consider generators $e_k,f_k,c_k$ for $k\geq0$, and write
\[
\begin{aligned}
e(z)&=\sum_{k\geq0}e_kz^{-k-1},
&f(z)&=\sum_{k\geq0}f_kz^{-k-1},\\
c(z)&=\sum_{k\geq0}c_kz^{-k-1},
&h(z)&=1+t c(z).
\end{aligned}
\]
The $c_k$ are generators in their own right, since $h(z)$ does not
determine them when $t=0$.

Since $\varepsilon$ need not be invertible, we seek a
$\Lambda$-bilinear skew-symmetric bracket
$[\cdot,\cdot]_\varepsilon$ on a unital associative
$\Lambda$-algebra generated by $e_k,f_k,c_k$.
For arbitrary elements $\xi,\eta,\rho$ of this algebra,
we require
\begin{equation}\label{eq:divided-algebra-rules}
\begin{gathered}
\varepsilon[\xi,\eta]_\varepsilon
=\xi\eta-\eta\xi,\qquad
[\xi,\eta\rho]_\varepsilon
=[\xi,\eta]_\varepsilon\rho
+\eta[\xi,\rho]_\varepsilon,\\
[\xi,[\eta,\rho]_\varepsilon]_\varepsilon
+[\eta,[\rho,\xi]_\varepsilon]_\varepsilon
+[\rho,[\xi,\eta]_\varepsilon]_\varepsilon
=0.
\end{gathered}
\end{equation}
These identities are required for all $\xi,\eta,\rho$. In particular, the
first identity is a relation between two operations, not a definition
of the bracket by division by $\varepsilon$.

We impose the following additional relations on the generators:
\begin{align}
[c_k,c_\ell]_\varepsilon&=0,\qquad
[e_k,f_\ell]_\varepsilon=-c_{k+\ell},
\label{eq:abstract-mixed}\\
[c(z),e(w)]_\varepsilon
&=[e(w)h(z)V(z-w)]_{z\gg w,\,w<0},
\label{eq:abstract-CE}\\
[f(w),c(z)]_\varepsilon
&=[h(z)f(w)V(z-w)]_{z\gg w,\,w<0}.
\label{eq:abstract-FC}
\end{align}
The right-hand sides of the last two identities contain only negative
powers of $z$. The quadratic relation for the $e_k$ is
\begin{align}
0={}&[e_{k+3},e_\ell]_\varepsilon
-3[e_{k+2},e_{\ell+1}]_\varepsilon
+3[e_{k+1},e_{\ell+2}]_\varepsilon
-[e_k,e_{\ell+3}]_\varepsilon\notag\\
&+(\varepsilon-t^2)
\bigl([e_{k+1},e_\ell]_\varepsilon
-[e_k,e_{\ell+1}]_\varepsilon\bigr)
+t(e_ke_\ell+e_\ell e_k).
\label{eq:abstract-EE}
\end{align}
The relation for the $f_k$ is obtained by replacing every $e$ by $f$
and reversing the sign of the last term. We also impose the Serre
relations
\begin{equation}\label{eq:abstract-Serre}
\begin{aligned}
\sum_{\sigma\in\mathfrak S_3}
[e_{d_{\sigma(1)}},
[e_{d_{\sigma(2)}},e_{d_{\sigma(3)}+1}]_\varepsilon]_\varepsilon
&=0,\\
\sum_{\sigma\in\mathfrak S_3}
[f_{d_{\sigma(1)}},
[f_{d_{\sigma(2)}},f_{d_{\sigma(3)}+1}]_\varepsilon]_\varepsilon
&=0.
\end{aligned}
\end{equation}
All indices in these relations are nonnegative.
Equations~\eqref{eq:abstract-CE}--\eqref{eq:abstract-FC} include the
relations for the first Cartan coefficients. In particular,
\[
\begin{aligned}
[c_0,e_k]_\varepsilon=[c_1,e_k]_\varepsilon&=0,
&[c_2,e_k]_\varepsilon&=-2e_k,\\
[c_0,f_k]_\varepsilon=[c_1,f_k]_\varepsilon&=0,
&[c_2,f_k]_\varepsilon&=2f_k.
\end{aligned}
\]

To compare with the usual affine Yangian presentation, invert
$\varepsilon$. Then $[\xi,\eta]_\varepsilon=\varepsilon^{-1}(\xi\eta-\eta\xi)$.
Set
\begin{equation}\label{eq:standard-normalization}
\mathrm e_k=\varepsilon^{-1}e_k,\qquad
\mathrm f_k=\varepsilon^{-1}f_k,\qquad
\psi_k=-\varepsilon^{-1}c_k.
\end{equation}
With $h_1=t_1$, $h_2=t_2$, and $h_3=-t$, we have
\[
\sigma_2=h_1h_2+h_1h_3+h_2h_3=\varepsilon-t^2,
\qquad
\sigma_3=h_1h_2h_3=-\varepsilon t.
\]
Under \eqref{eq:standard-normalization}, the defining
relations agree with (Y0)--(Y6), including
(Y4$'$)--(Y5$'$), of \cite[Sections~1.3--1.4]{Tsy}.
In particular, the current relations agree since
\[
\frac{\prod_{i=1}^3(s+h_i)}{\prod_{i=1}^3(s-h_i)}-1
=\frac{-2\varepsilon t}
{s^3+(\varepsilon-t^2)s+\varepsilon t}
=\varepsilon t V(s).
\]

For the geometric construction, we retain the presentation before
inverting $\varepsilon$. The parameters $t$ and $\varepsilon$ act by
multiplication by $K_X$ and $e_X$, respectively, on the $H^*(X)$ input.
In the next subsection, we define $A*B$ and $[A,B]_{\mathrm{red}}$
for the operator families considered here and prove
\[
(A*B-B*A)(\gamma)=[A,B]_{\mathrm{red}}(e_X\gamma).
\]
This realizes the first relation in \eqref{eq:divided-algebra-rules}
without dividing by $e_X$. We also verify the remaining rules for
these operations before checking the defining Yangian relations.

\subsubsection{Products and reduced commutators}
We now construct the multiplication and bracket corresponding to those in the preceding subsection. Recall that our operators are maps
\[
A:\mathbb H_X\otimes H^*(X)\longrightarrow\mathbb H_X.
\]We use the small diagonal to define the multiplication \(A*B\), and the transfer property to define the bracket \([A,B]_{\mathrm{red}}\).
 For two such families, define
\begin{equation}\label{eq:star-red}
\begin{aligned}
(A*B)(\gamma)&=(AB)(\tau_{2*}\gamma)
=\sum_\ell A(\gamma_\ell')B(\gamma_\ell''),\\
[A,B]_{\mathrm{red}}(\gamma)&=[A(1_X),B(\gamma)],
\end{aligned}
\end{equation}
where $\tau_{2*}\gamma=\sum_\ell\gamma_\ell'\otimes\gamma_\ell''$. The reduced bracket is used for families with the transfer property
\[
[A(\alpha),B(\beta)]=[A,B]_{\mathrm{red}}(\alpha\beta).
\]
The transfer property means that
$[A(\alpha),B(\beta)]$ depends on $\alpha$ and $\beta$
only through their product $\alpha\beta$.
Thus the commutator is completely determined by
setting $\alpha=1_X$, which gives
$[A,B]_{\mathrm{red}}$.
For $[E_a, E_b]$, this property was established
in Lemma~\ref{lem:EE-transfer}.

The product $*$ is associative, because $(\tau_{2*}\otimes\Id)\tau_{2*}=(\Id\otimes\tau_{2*})\tau_{2*}=\tau_{3*}$. Its unit is the family $\mathbf1:\gamma\mapsto(\int_X\gamma)\Id$. For an even class $\eta\in H^*(X)$, its action on a family is given by $(\eta A)(\gamma)=A(\eta\gamma)$. Graded cocommutativity and diagonal self-intersection give
\begin{equation}\label{eq:geometric-divided-rules}
\begin{split}
(A*B-B*A)(\gamma)
&=\sum_\ell[A(\gamma_\ell'),B(\gamma_\ell'')]\\
&=[A,B]_{\mathrm{red}}\left(\sum_\ell\gamma_\ell'\gamma_\ell''\right)
=[A,B]_{\mathrm{red}}(e_X\gamma).
\end{split}
\end{equation}
Thus $A*B-B*A=e_X[A,B]_{\mathrm{red}}$ in the parameter convention just stated; $e_X$ acts on the $H^*(X)$ input, not on the Hilbert-scheme output.

The reduced bracket satisfies Jacobi identity and the Leibniz rule. For Jacobi identity, evaluate the cyclic sum on $\gamma$. By the transfer property its three terms become
\[
[A(1_X),[B(1_X),C(\gamma)]]
+[B(1_X),[C(\gamma),A(1_X)]]
+[C(\gamma),[A(1_X),B(1_X)]],
\]
which is zero. For Leibniz rule, apply the even operator $A(1_X)$ to the product in \eqref{eq:star-red}:
\[
\begin{split}
[A,B*C]_{\mathrm{red}}(\gamma)
&=\sum_\ell[A(1_X),B(\gamma_\ell')C(\gamma_\ell'')]\\
&=\bigl([A,B]_{\mathrm{red}}*C+B*[A,C]_{\mathrm{red}}\bigr)(\gamma).
\end{split}
\]

We now check that the transfer property is preserved
when we combine these operators.
The relations proved above show that $E_k$, $F_k$,
and the coefficients of $\mathscr C_n(z)$ satisfy
the transfer property.

Since the small diagonal satisfies
\[
\tau_{2*}(\alpha\beta)
=(\alpha\otimes 1_X)\cup\tau_{2*}\beta,
\]
the Leibniz rule shows that the transfer property
is preserved under the multiplication $*$.
The Jacobi identity gives the same conclusion
for the reduced bracket.
Thus the operators obtained from $E$, $F$, and
$\mathscr C$ by taking products and reduced brackets
still satisfy the transfer property.

\subsubsection{Completion of the proof}
\begin{proof}[Proof of the action statement in Theorem~\ref{thm:main}]
By \eqref{eq:geometric-divided-rules} and the Jacobi identity and
Leibniz rule established above, the multiplication
$*$ and the reduced bracket satisfy
\eqref{eq:divided-algebra-rules} under
$t\mapsto K_X$ and $\varepsilon\mapsto e_X$.

It remains to verify the defining relations.
The commutativity of the Cartan operators and
\eqref{eq:main-EF} give \eqref{eq:abstract-mixed}.
Setting $\alpha=1_X$ in \eqref{eq:divided-HE}
gives \eqref{eq:abstract-CE}, using
\eqref{eq:V-surface} and the definitions of
$*$ and the reduced bracket.
The corresponding lowering relation gives
\eqref{eq:abstract-FC}.

By the transfer property,
Theorem~\ref{thm:EE-independent} gives
\eqref{eq:abstract-EE} and its negative counterpart.
Similarly, Theorem~\ref{thm:Serre} gives
\eqref{eq:abstract-Serre}.

Thus the assignments
\[
e_k\mapsto E_k,\qquad
f_k\mapsto F_k,\qquad
c_k\mapsto\mathcal C_k
\]
satisfy all the defining relations.
Finally, $\kappa_{0,n}=1$ and $\kappa_{1,n}=0$
give $c_0=1$ and $c_1=0$, where the unit acts by
$\gamma\mapsto(\int_X\gamma)\operatorname{Id}$.
This completes the proof.
\end{proof}

\appendix
\section{A direct proof of the two coefficient identities}\label{app:coefficient-identities}
In this appendix, we give a direct combinatorial proof of Proposition~\ref{prop:two-identities}. We first express the coefficients obtained in Section~\ref{sec:recursions} in terms of two sums over permutations. These expressions are verified using the coefficient recursions and finite reconstruction, independently of Proposition~\ref{prop:two-identities}. The proof then reduces to counting the ways of inserting one or two additional labels into permutations. Throughout the appendix, all weights are integers. We consider total weight one for the derivative coefficients and total weight zero for the Chern character coefficients.
\subsection{Shorter formulas for Chern character coefficients}
Let $\mathcal A=(p_i)_{i\in I}$ be a labelled family, with $|I|=m$, and put $x_+=\max(x,0)$. For a permutation $\sigma$ of $I$, consider its disjoint cycle decomposition. Let its cycles be regarded as labelled subsets and set
\[
u(\sigma)=\sum_{B\text{ a cycle of }\sigma}(p_B)_+,
\qquad
S_j(\mathcal A)=\sum_{\sigma\in\mathfrak S_I}u(\sigma)^j,
\quad j=1,2.
\]
We set $S_1(\varnothing)=S_2(\varnothing)=0$. These sums can be evaluated without enumerating permutations:
\begin{align}
S_1(\mathcal A)
&=\sum_{n=1}^{m}(n-1)!(m-n)!
\sum_{\substack{B\subseteq I\\|B|=n}}(p_B)_+,
\label{eq:direct-S1}\\
S_2(\mathcal A)
&=\sum_{n=1}^{m}(n-1)!(m-n)!
\sum_{\substack{B\subseteq I\\|B|=n}}(p_B)_+^2\notag\\
&\quad+\sum_{u=1}^{m-1}\sum_{v=1}^{m-u}
(u-1)!(v-1)!(m-u-v)!
\sum_{\substack{B,C\subseteq I\\B\cap C=\varnothing\\|B|=u,\ |C|=v}}
(p_B)_+(p_C)_+.
\label{eq:direct-S2}
\end{align}
Indeed, a prescribed cycle with size $n$ forms $(n-1)!$ different cycles and the remaining labels become $(m-n)!$ permutations. The second moment has either one distinguished cycle or two ordered distinguished cycles, giving \eqref{eq:direct-S2}. 

Now we deduce another useful identity. Since $$\sum_{B\text{ a cycle of }\sigma}(p_B)_+ - \sum_{B\text{ a cycle of }\sigma}(-p_B)_+ = s(\mathcal A),$$ there is 
\[
u(\sigma)\sum_{B\text{ a cycle of }\sigma}(-p_B)_+ = u(\sigma)^2 - u(\sigma)s(\mathcal A).
\]
Summing over all permutations gives 
\[
S_2(\mathcal A) - s(\mathcal A)S_1(\mathcal A) 
= \sum_{\sigma\in\mathfrak S_I}u(\sigma)\sum_{B\text{ a cycle of }\sigma}(-p_B)_+.
\]
Expanding the right-hand side by the number of cycles and their sizes gives
\begin{equation}\label{eq:direct-S2-opposite}
    S_2(\mathcal A) - s(\mathcal A)S_1(\mathcal A) =\sum_{u=1}^{m-1}\sum_{v=1}^{m-u}
    (u-1)!(v-1)!(m-u-v)!
    \sum_{\substack{B,C\subseteq I,\ B\cap C=\varnothing\\
    |B|=u,\ |C|=v}}
    (p_B)_+(-p_C)_+.
\end{equation}
We have the following counting formulas for the sums over labelled subsets.
\begin{lemma}\label{lem:direct-deletion}
If $s(\mathcal A)=0$ and $m\geq2$, then for each $i\in I$,
\begin{equation}\label{eq:direct-deletion}
\frac{S_1(\mathcal A\backslash\{p_i\})}{(m-1)!}
=\frac{S_1(\mathcal A)}{m!}-\frac{p_i}{2}.
\end{equation}
Here the deletion removes the specified label. Moreover, for $1\leq r\leq m$,
\begin{equation}\label{eq:direct-marked-cycles}
\sum_{\pi\in\Pi_r(I)}\prod_{B\in\pi}(|B|-1)!p_B
=\frac{(m-1)!}{(r-1)!}
\sum_{\substack{J\subseteq I\\|J|=r}}\prod_{i\in J}p_i.
\end{equation}
In particular, if $m\geq3$ and $s(\mathcal A)=0$, then
\begin{equation}\label{eq:direct-triple-cycles}
\sum_{\pi\in\Pi_3(I)}\prod_{B\in\pi}(|B|-1)!p_B
=\frac{(m-1)!}{6}\sum_{i\in I}p_i^3.
\end{equation}
Each block in \eqref{eq:direct-marked-cycles} has size between $1$ and $m-r+1$.
\end{lemma}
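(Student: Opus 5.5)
For \eqref{eq:direct-deletion}, the plan is to work with the subset formula \eqref{eq:direct-S1} rather than with permutations. Write $S_1(\mathcal A)/m!=\sum_{B\neq\varnothing}c_{|B|,m}(p_B)_+$ with $c_{n,m}=(n-1)!(m-n)!/m!=\frac1n\binom mn^{-1}$. Split the subsets $B\subseteq I$ according to whether $i\in B$. Since $s(\mathcal A)=0$, complementation gives $p_{I\backslash B}=-p_B$, and hence $(p_B)_+-(p_{I\backslash B})_+=p_B$. This is the relation I intend to exploit.

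For the deleted family $\mathcal A'=\mathcal A\backslash\{p_i\}$, each subset $B\subseteq I\backslash\{i\}$ of size $n$ has coefficient $c_{n,m-1}$. I would use the identity
\[
c_{n,m-1}=c_{n,m}+c_{m-n,m}\qquad (1\leq n\leq m-1),
\]
which is the partial-fraction relation $\frac{(n-1)!(m-1-n)!}{(m-1)!}\cdot\frac1m\bigl(\frac{m}{1}\bigr)=\frac{(n-1)!(m-n-1)!}{m!}\bigl((m-n)+n\bigr)$. This rewrites $S_1(\mathcal A')/(m-1)!$ as $\sum_{B\not\ni i}c_{|B|,m}(p_B)_++\sum_{B\not\ni i}c_{m-|B|,m}(p_B)_+$.

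In the second sum I would substitute $C=I\backslash B\ni i$, so that $|C|=m-|B|$ and $(p_B)_+=(-p_C)_+=(p_C)_+-p_C$. The result is
\[
\frac{S_1(\mathcal A')}{(m-1)!}=\frac{S_1(\mathcal A)}{m!}-\sum_{C\ni i,\,C\neq I}c_{|C|,m}p_C-c_{m,m}(p_I)_+ .
\]
The last term vanishes because $p_I=0$, and the $C=I$ term may be added freely for the same reason. It then remains to evaluate $\sum_{C\ni i}c_{|C|,m}p_C$. By symmetry, $\sum_{C\ni i,|C|=n}p_C=\binom{m-1}{n-1}p_i+\binom{m-2}{n-2}(p_I-p_i)$, and this reduces to $\binom{m-1}{n-1}p_i-\binom{m-2}{n-2}p_i$. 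Multiplying by $\frac1n\binom mn^{-1}$ and summing over $n$ gives a scalar times $p_i$. Checking that this scalar equals $1/2$ is the step I expect to be the main obstacle, although it is only a routine beta-integral sum; the check at $m=2$ confirms the value $1/2$. I would carry it out by writing $\binom{m-1}{n-1}/(n\binom mn)=1/m$ and $\binom{m-2}{n-2}/(n\binom mn)=(n-1)/(m(m-1))$, and then summing.

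For \eqref{eq:direct-marked-cycles}, the plan is a double-counting argument, and the Pr\"ufer-type Lemma~\ref{lem:forest} is not needed. Expanding $\prod_B p_B$ means choosing one representative $j_B\in B$ in each block, which produces a set $J$ of size $r$ with exactly one element per block. For fixed $J$, I would count the partitions weighted by $\prod(|B|-1)!$ in which each block contains exactly one element of $J$. These correspond to arrangements of the remaining $m-r$ labels into ordered sequences attached after each root. The count is the rising factorial $r(r+1)\cdots(m-1)=(m-1)!/(r-1)!$, which gives \eqref{eq:direct-marked-cycles}.

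Finally, \eqref{eq:direct-triple-cycles} follows by setting $r=3$ and applying Newton's identity for $e_3$. With $p_1:=\sum_i p_i=0$, we have $e_3=(p_1^3-3p_1p_2+2p_3)/6=\frac13\sum_i p_i^3$, and multiplying by $(m-1)!/2$ gives the factor $(m-1)!/6$ stated in \eqref{eq:direct-triple-cycles}.
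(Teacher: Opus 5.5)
Your proposal is correct and follows essentially the paper's own argument. For the deletion identity you use the subset formula for $S_1$, complementation $p_{I\backslash B}=-p_B$ and $x_+-(-x)_+=x$; the only difference is bookkeeping, since you split coefficients via $c_{n,m-1}=c_{n,m}+c_{m-n,m}$ where the paper subtracts $mS_1(\mathcal A\backslash\{p_i\})$ directly, and your scalar does come out to $\sum_{n}(m-n)/(m(m-1))=\tfrac12$. The block identity comes from choosing one representative per block and counting $(m-1)!/(r-1)!$ permutations whose cycles each meet $J$ once, and the triple identity from $e_3=\tfrac13\sum_ip_i^3$ when $\sum_ip_i=0$; the only blemish is that writing $p_1,p_2,p_3$ for power sums clashes with the weights $p_i$.
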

\begin{proof}
In \eqref{eq:direct-S1}, decompose a subset of $I$ containing $i$ as $B\sqcup\{i\}$, then we get
\[
S_1(\mathcal A)
={}
\sum_{n=1}^{m-1}
(n-1)!(m-n)!
\sum_{\substack{B\subseteq I\backslash\{i\}\\|B|=n}}
(p_B)_+ +
\sum_{n=0}^{m-1}
n!(m-n-1)!
\sum_{\substack{B\subseteq I\backslash\{i\}\\|B|=n}}
(p_i+p_B)_+.
\]
Subtraction gives
\[
\begin{gathered}
S_1(\mathcal A)
-mS_1(\mathcal A\backslash\{p_i\})
={}
\sum_{n=0}^{m-1}
n!(m-n-1)!
\sum_{\substack{B\subseteq I\backslash\{i\}\\|B|=n}}
(p_i+p_B)_+
\\
-
\sum_{n=1}^{m-1}
n!(m-n-1)!
\sum_{\substack{B\subseteq I\backslash\{i\}\\|B|=n}}
(p_B)_+.
\end{gathered}
\]
By taking $B' = I\backslash (B\sqcup\{i\})$, we have $|B'|=m-n-1$, then $p_{B'}=s(\mathcal{A})-p_B-p_i = -(p_B+p_i)$. The formula $(-x)_+-x_+=-x$ implies
\[
S_1(\mathcal A)-mS_1(\mathcal A\backslash\{p_i\})
=-\sum_{n=1}^{m-1}(n-1)!(m-n)!
\sum_{\substack{B\subseteq I\backslash\{i\}\\|B|=n}}p_B
=\frac{m!p_i}{2}.
\]
For the last equality, the inner sum is
$-\binom{m-2}{n-1}p_i$, and
\(\sum_{n=1}^{m-1}
(n-1)!(m-n)!\binom{m-2}{n-1}
=\frac{m!}{2}\). This proves \eqref{eq:direct-deletion}.

For \eqref{eq:direct-marked-cycles}, selecting a label $i_B$ from each block $B\in\pi$, there is the expansion
\[
\prod_{B\in\pi}p_B
=
\sum_{\substack{i_B\in B\\B\in\pi}}
\prod_{B\in\pi}p_{i_B}.
\]
The chosen labels $i_B$ are distinct, so they form a subset $J\subseteq I$ of size $r$. Conversely, for a fixed subset $J\subseteq I$ of size $r$, $\prod_{i\in J}p_i$ appears in the expansion of $\prod_{B\in\pi}p_B$ if and only if each block $B\in\pi$ contains exactly one label from $J$. Left hand side of \eqref{eq:direct-marked-cycles} is therefore
\[
\sum_{\substack{J\subseteq I\\|J|=r}}c_J\prod_{i\in J}p_i,
\]
where 
\[
c_J = \sum_{\substack{\pi\in\Pi_r(I)\\|B\cap J|=1,\ B\in\pi}}\prod_{B\in\pi}(|B|-1)!.
\]
Since $(|B|-1)!$ counts the number of cycles supported on $B$, there is 
\[
c_J
=
\#\left\{
\sigma\in\mathfrak S_I:
\text{every cycle of }\sigma
\text{ contains exactly one element of }J
\right\}.
\]
This number is exactly $\frac{(m-1)!}{(r-1)!}$ by direct counting. When $r=3$ and $s(\mathcal A)=0$, \eqref{eq:direct-triple-cycles} follows from \eqref{eq:direct-marked-cycles} and the identity
\[
\sum_{i<j<k}p_ip_jp_k = \frac13\sum_i p_i^3.
\]
\end{proof}

\begin{lemma}\label{lem:direct-merge-moments}
For $s(\mathcal A)=0$ and $m\geq2$, the merge operator of \eqref{eq:merge-def} satisfies
\begin{align}
\mathfrak U S_1(\mathcal A)
&=\frac{(m-1)!}{2}\Omega(\mathcal A),
\label{eq:direct-merge-S1}\\
\mathfrak U S_2(\mathcal A)
&=\frac{\Omega(\mathcal A)}{m}S_1(\mathcal A)
-\frac{(m-1)!}{6}\sum_{i\in I}p_i^3,
\label{eq:direct-merge-S2}
\end{align}
where $\Omega(\mathcal A)=\sum_i p_i(1-|p_i|)=-\sum_i p_i|p_i|$.
\end{lemma}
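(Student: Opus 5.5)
The plan is to interpret the merge operator on permutation sums directly. For distinct $a,b\in I$, a permutation $\tau$ of the merged family $\mathcal A_{a,b}$ corresponds bijectively to a permutation $\sigma\in\mathfrak S_I$ with $\sigma(a)=b$: insert $b$ immediately after $a$ in the cycle of $\tau$ containing the merged label. Cycle sums are unchanged, so $u(\tau)=u(\sigma)$. Hence $S_j(\mathcal A_{a,b})=\sum_{\sigma(a)=b}u(\sigma)^j$, and the same holds with the roles of $a,b$ exchanged.

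Averaging the two orientations of each unordered pair, I obtain
\[
\mathfrak U S_j(\mathcal A)=\frac12\sum_{\sigma\in\mathfrak S_I}u(\sigma)^j\sum_{a:\,\sigma(a)\neq a}\bigl(p_a+p_{\sigma(a)}\bigr)
=\sum_{\sigma}u(\sigma)^j\sum_{a:\,\sigma(a)\ne a}p_a .
\]
Since $s(\mathcal A)=0$, this equals $-\sum_{i\in I}\sum_{\sigma(i)=i}p_i\,u(\sigma)^j$. For a permutation fixing $i$, write $\sigma=\sigma'\sqcup(i)$ with $\sigma'\in\mathfrak S_{I\backslash\{i\}}$. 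Then $u(\sigma)=(p_i)_++u(\sigma')$.

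For $j=1$ this gives
\[
-\mathfrak U S_1=(m-1)!\sum_ip_i(p_i)_++\sum_ip_iS_1(\mathcal A\backslash\{p_i\}).
\]
By \eqref{eq:direct-deletion}, $S_1(\mathcal A\backslash\{p_i\})=(m-1)!\bigl(S_1(\mathcal A)/m!-p_i/2\bigr)$. Because $\sum_ip_i=0$, the second sum becomes $-\frac{(m-1)!}{2}\sum_ip_i^2$. Using $x\,x_+-x^2/2=x|x|/2$, this yields $\mathfrak U S_1=-\frac{(m-1)!}{2}\sum_ip_i|p_i|=\frac{(m-1)!}2\Omega(\mathcal A)$, which is \eqref{eq:direct-merge-S1}.

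For $j=2$ the same step gives
\[
-\mathfrak US_2=(m-1)!\sum_ip_i(p_i)_+^2+2\sum_ip_i(p_i)_+S_1(\mathcal A\backslash\{p_i\})+\sum_ip_iS_2(\mathcal A\backslash\{p_i\}).
\]
I substitute the deletion lemma into the middle term. It contributes $\frac{2}{m}S_1(\mathcal A)\sum_ip_i(p_i)_+$, plus a cubic term $-(m-1)!\sum_ip_i^2(p_i)_+$. The first of these should combine into $\frac{\Omega}{m}S_1$, since $\sum_ip_i(p_i)_+=\frac12\sum p_i|p_i|$ when $\sum p_i^2$ terms cancel appropriately.

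The main obstacle is the last term, $\sum_ip_iS_2(\mathcal A\backslash\{p_i\})$, which requires a second-moment analogue of \eqref{eq:direct-deletion}. I would derive it exactly as in the proof of Lemma~\ref{lem:direct-deletion}. Expand $S_2$ via \eqref{eq:direct-S2}, and split each subset (and each pair of disjoint subsets $B,C$) according to whether it contains $i$. Replace a block containing $i$ by its complement, using $p_{B'}=-(p_B+p_i)$ and $(-x)_+-x_+=-x$. Use \eqref{eq:direct-S2-opposite} to convert the mixed products $(p_B)_+(-p_C)_+$ that arise. Finally, after multiplying by $p_i$ and summing over $i$, evaluate the remaining purely polynomial sums with \eqref{eq:direct-marked-cycles}, notably the triple-cycle identity \eqref{eq:direct-triple-cycles}, which is where the term $-\frac{(m-1)!}{6}\sum_ip_i^3$ originates. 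The bookkeeping of the factorial weights $(u-1)!(v-1)!(m-u-v)!$ under deletion is the delicate part, and I would check the result against small cases ($m=2,3$) before assembling the final identity \eqref{eq:direct-merge-S2}.
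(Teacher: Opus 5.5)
Your reduction is correct and is the paper's. The bijection between permutations of $\mathcal A_{a,b}$ and permutations $\sigma\in\mathfrak S_I$ with $\sigma(a)=b$ gives $\mathfrak U S_j(\mathcal A)=-\sum_i p_i\sum_{\sigma(i)=i}u(\sigma)^j$. Your derivation of \eqref{eq:direct-merge-S1} from this is complete; the paper reaches the same intermediate identity for $S_1$ by a subset expansion and uses your bijection only for $S_2$. Your expansion of $-\mathfrak U S_2$ is also right. Carrying it one step further: the cubic term $-(m-1)!\sum_ip_i^2(p_i)_+$ cancels $(m-1)!\sum_ip_i(p_i)_+^2$. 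Since $\sum_ip_i(p_i)_+=\frac12\sum_ip_i^2+\frac12\sum_ip_i|p_i|$, the middle summand contributes $-\frac{\Omega(\mathcal A)}{m}S_1(\mathcal A)$ plus a leftover $\frac{S_1(\mathcal A)}{m}\sum_ip_i^2$. Hence \eqref{eq:direct-merge-S2} is equivalent to
\[
\sum_{i\in I}p_iS_2(\mathcal A\backslash\{p_i\})=-\frac{S_1(\mathcal A)}{m}\sum_{i\in I}p_i^2+\frac{(m-1)!}{6}\sum_{i\in I}p_i^3,
\]
and this is precisely the part you leave unproved. Your sketch does not supply it. Splitting subsets by whether they contain $i$ and passing to complements leaves, for each fixed $i$, positive parts of partial sums and products $(p_B)_+(\pm p_C)_+$ over subsets of $I\backslash\{i\}$. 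Rewriting mixed products via \eqref{eq:direct-S2-opposite} only turns them back into $S_2-sS_1$ of a subfamily. Your phrase ``the remaining purely polynomial sums'' assumes exactly what has to be shown, and you give no mechanism for it.

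The missing idea is to add $\sum_ip_i^2S_1(\mathcal A\backslash\{p_i\})$ and apply \eqref{eq:direct-S2-opposite} to the deleted family, whose total weight is $-p_i$. Then $S_2(\mathcal A\backslash\{p_i\})+p_iS_1(\mathcal A\backslash\{p_i\})$ is the sum of $(|B|-1)!(|C|-1)!(m-1-|B|-|C|)!\,(p_B)_+(-p_C)_+$ over disjoint nonempty $B,C\subseteq I\backslash\{i\}$. Multiply by $p_i$ and sum over $i$. The label $i$ joins the leftover labels into a third nonempty block $D$, giving $\sum(|B|-1)!(|C|-1)!(|D|-1)!\,p_D(p_B)_+(-p_C)_+$ over ordered decompositions $I=B\sqcup C\sqcup D$. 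The positive parts disappear only after summing over the six orderings of each unordered three-block partition. This uses the fact that, for $q_1+q_2+q_3=0$, the sum of $q_c(q_a)_+(-q_b)_+$ over the orderings $(a,b,c)$ of $(1,2,3)$ equals $-2q_1q_2q_3$. The result is $-2\sum_{\pi\in\Pi_3(I)}\prod_{B\in\pi}(|B|-1)!\,p_B=-\frac{(m-1)!}{3}\sum_ip_i^3$ by \eqref{eq:direct-triple-cycles}. Meanwhile \eqref{eq:direct-deletion} evaluates the added term as $\frac{S_1(\mathcal A)}{m}\sum_ip_i^2-\frac{(m-1)!}{2}\sum_ip_i^3$. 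Subtracting gives the displayed identity. So \eqref{eq:direct-triple-cycles} enters through this symmetrized three-block sum, not through a per-$i$ second-moment analogue of \eqref{eq:direct-deletion}.
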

\begin{proof}
By definition, $\mathfrak{U}S_1(\mathcal{A}) = \sum_{\{i,j\}\subseteq I;i\neq j}(p_i+p_j)S_1(\mathcal A_{i,j})$. For a subset $B\subset I$ distinguished by whether containing $i,j$ or not, $S_1(\mathcal{A}_{i,j})$ can be rewritten as
\[
\begin{aligned}
S_1(\mathcal A_{i,j})
={}&
\sum_{n=1}^{m-2}
(n-1)!(m-n-1)!
\sum_{\substack{B\subseteq I\setminus\{i,j\}\\|B|=n}}
(p_B)_+
\\
&+
\sum_{n=2}^{m}
(n-2)!(m-n)!
\sum_{\substack{B\subseteq I\\|B|=n\\i,j\in B}}
(p_B)_+.
\end{aligned}
\]
For any subset $C\subseteq I$ with $|C|\geq2$, the following identity is direct:
\[
\sum_{\substack{\{i,j\}\subseteq C\\i\neq j}}(p_i+p_j)
=(|C|-1)p_C.
\]
Here the sum runs over unordered pairs of distinct elements.
$s(\mathcal{A})=0$ gives $p_{I\backslash B} = -p_B$. The merged sum is thus:
\[
\begin{aligned}
\mathfrak{U}S_1(\mathcal A)
={}&
\sum_{n=1}^{m-2}
(n-1)!(m-n-1)!
\sum_{\substack{B\subseteq I\\|B|=n}}
(p_B)_+
\sum_{\{i,j\}\subseteq I\setminus B}
(p_i+p_j)
\\
&+
\sum_{n=2}^{m}
(n-2)!(m-n)!
\sum_{\substack{B\subseteq I\\|B|=n}}
(p_B)_+
\sum_{\{i,j\}\subseteq B}
(p_i+p_j)
\\
=&
-\sum_{n=1}^{m-2}
(n-1)!(m-n-1)!(m-n-1)
\sum_{\substack{B\subseteq I\\|B|=n}}
p_B(p_B)_+
\\
&+
\sum_{n=2}^m (n-1)!(m-n)!\sum_{\substack{B\subseteq I\\|B|=n}}p_B(p_B)_+
\\
={}&
\sum_{n=2}^{m-1}
(n-1)!(m-n-1)!
\sum_{\substack{B\subseteq I\\|B|=n}}
p_B(p_B)_+
-(m-2)(m-2)!
\sum_{i\in I}p_i(p_i)_+.
\end{aligned}
\]
By comparing with the expansion of $-\sum_i p_iS_1(\mathcal A\backslash\{p_i\})$ there is 
\[
\mathfrak U S_1(\mathcal A)
=-\sum_i p_iS_1(\mathcal A\backslash\{p_i\})
-(m-1)!\sum_i p_i(p_i)_+.
\]
By \eqref{eq:direct-deletion}, the first term is
$(m-1)!\sum_i p_i^2/2$. Using
$2p_i(p_i)_+=p_i^2+p_i|p_i|$ and
$\Omega(\mathcal A)=-\sum_i p_i|p_i|$ proves \eqref{eq:direct-merge-S1}.

For the proof of \eqref{eq:direct-merge-S2}, we split it into two steps:
First we show
\[\mathfrak U S_2
=
-\sum_i p_i\Bigl(S_2(\mathcal A\backslash\{p_i\})
+2(p_i)_+S_1(\mathcal A\backslash\{p_i\})
+(m-1)!(p_i)_+^2\Bigr)\]
and
\[\sum_i p_iS_2(\mathcal A\backslash\{p_i\})
+\sum_i p_i^2S_1(\mathcal A\backslash\{p_i\})
=
-2\sum_{\pi\in\Pi_3(I)}
\prod_{B\in\pi}(|B|-1)!p_B,\]
thus
\[
\mathfrak U S_2(\mathcal A)={}
-\sum_i p_i|p_i|S_1(\mathcal A\backslash\{p_i\})
-(m-1)!\sum_i(p_i)_+^3+2\sum_{\pi\in\Pi_3(I)}\prod_{B\in\pi}(|B|-1)!p_B.
\]
Then apply \eqref{eq:direct-deletion} and \eqref{eq:direct-triple-cycles}, and use
$2\sum_i(p_i)_+^3=\sum_i p_i^3+\sum_i|p_i|^3$.
The absolute cubes cancel and leave \eqref{eq:direct-merge-S2}. For $m=2$ there is no three-block partition and $\sum_i p_i^3=0$, so the same calculation applies.

Now we consider the two identities in the first step. By definition, 
\[\mathfrak U S_2(\mathcal A)
=
\sum_{\{i,j\}}
(p_i+p_j)
\sum_{\tau\in\mathfrak S_{I_{i,j}}}
u(\tau)^2,
\]
where $i,j$ are merged into a new label $\diamondsuit$. Its weight is $p_\diamondsuit = p_i+p_j$. A useful observation is for $\{i,j\}$ and $\tau\in\mathfrak S_{I_{i,j}}$, we can substitute $\diamondsuit$ in the cycle containing $\diamondsuit$ by the ordered pair $(i,j)$ or $(j,i)$. In this way, we recover a permutation $\sigma\in\mathfrak S_I$ with the same cycle sums $u(\tau) = u(\sigma)$, and moreover these data, $\{i,j\}$, $\tau$ and a choice of order of $\{i,j\}$, correspond to $\sigma\in\mathfrak S_I$ together with a label $a\in I$ such that $\sigma(a)\neq a$. Thus
\[
\begin{aligned}
\mathfrak U S_2(\mathcal A)
&=
\frac12\sum_{\sigma\in\mathfrak S_I}
u(\sigma)^2
\sum_{\substack{a\in I\\\sigma(a)\ne a}}
\bigl(p_a+p_{\sigma(a)}\bigr)
\\
&=
-\sum_{\sigma\in\mathfrak S_I}
u(\sigma)^2
\sum_{\substack{i\in I\\\sigma(i)=i}}p_i.
\end{aligned}
\]
Exchanging the summation order gives the first identity. For the second identity we exploit \eqref{eq:direct-S2-opposite} to get
\[
\begin{aligned}
&\sum_i p_iS_2(\mathcal A\backslash\{p_i\})
+\sum_i p_i^2S_1(\mathcal A\backslash\{p_i\})
\\
=&
\sum_i p_i\bigl(S_2(\mathcal A\backslash\{p_i\}) + p_iS_1(\mathcal A\backslash\{p_i\})\bigr)
\\
=&
\sum_{i\in I}p_i
\sum_{u=1}^{m-2}
\sum_{v=1}^{m-1-u}
(u-1)!(v-1)!(m-u-v-1)!
\\
&\qquad\cdot
\sum_{\substack{
B,C\subseteq I\setminus\{i\}\\
B\cap C=\varnothing\\
|B|=u,\ |C|=v}}
(p_B)_+(-p_C)_+
\\
=&
\sum_{\substack{
B,C,D\ne\varnothing\\
B\sqcup C\sqcup D=I\\
(B,C)\text{ ordered}
}}
(|B|-1)!(|C|-1)!(|D|-1)!
\,p_D(p_B)_+(-p_C)_+.
\end{aligned}
\]
The last step is due to exchanging the summation order. For integers $q_1,q_2,q_3$ with $q_1+q_2+q_3=0$, there is
\[
\sum_{\substack{a,b,c\text{ distinct}\\
\{a,b,c\}=\{1,2,3\}}}
q_c(q_a)_+(-q_b)_+
=
-2q_1q_2q_3.
\]
By assigning $\{q_1,q_2,q_3\}=\{p_B,p_C,p_D\}$ we get the identity we want. The last term is exactly $-2\sum_{\pi\in\Pi_3(I)}\prod_{B\in\pi}(|B|-1)!p_B$.
\end{proof}

For a family $\mathcal A$ with $s(\mathcal A)=1$, let $\mathcal A^{(i)}$ replace the weight $p_i$ by $p_i-1$. It has total weight zero. In $\mathcal A\sqcup\{-1\}$ the appended entry is a new label, even if $-1$ already occurs.

\begin{lemma}\label{lem:direct-unit-shift}
For integer weights of total one and $m\geq1$,
\begin{align}
\sum_i(1-p_i)S_1(\mathcal A^{(i)})
&=mS_1(\mathcal A)-\frac{m+1}{2}m!,
\label{eq:direct-shift-S1}\\
\sum_i(1-p_i)\bigl(S_2(\mathcal A^{(i)})-S_1(\mathcal A^{(i)})\bigr)
&=S_2(\mathcal A\sqcup\{-1\})-S_1(\mathcal A\sqcup\{-1\}).
\label{eq:direct-shift-S2}
\end{align}
\end{lemma}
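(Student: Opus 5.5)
The plan is to prove both identities at the level of individual permutations rather than through the subset expansions \eqref{eq:direct-S1}--\eqref{eq:direct-S2}. The one elementary fact I will use throughout is this: lowering the weight of a single label in a cycle $B$ by one changes $(p_B)_+$ by $-\delta_B$, where $\delta_B=[p_B\geq1]$, because all weights are integers.

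Fix $\sigma\in\mathfrak S_I$. Let $B_i$ denote the cycle of $\sigma$ containing $i$, and put $\delta_i=\delta_{B_i}$. By the fact above, the same $\sigma$, viewed as a permutation of the labels of $\mathcal A^{(i)}$, has cycle statistic $u(\sigma)-\delta_i$. Two consequences follow.

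\begin{itemize}
\item Since $s(\mathcal A)=1$, we have $\sum_i(1-p_i)=m-1$. Also
\[
\sum_i p_i\delta_i=\sum_{B:\,p_B\geq1}p_B=u(\sigma).
\]
\item Since $\delta_i^2=\delta_i$,
\[
(u-\delta_i)^2-(u-\delta_i)=u^2-u-2\delta_i(u-1).
\]
\end{itemize}

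\textbf{Identity \eqref{eq:direct-shift-S2}.} I will expand the right side by inserting the new label $\star$ of weight $-1$ into $\sigma$. A permutation of $I\sqcup\{\star\}$ arises in exactly one of two ways.

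\begin{enumerate}
\item $\star$ is a fixed point. Its cycle sum is $-1$, so it contributes $0$ to $u$, and the statistic is $u(\sigma)$.
\item $\star$ is inserted immediately after some $a\in I$. This lowers the sum of the cycle $B_a$ by one, so the statistic is $u(\sigma)-\delta_a$.
\end{enumerate}

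Hence
\[
\text{RHS}=\sum_\sigma\Bigl[(u^2-u)+\sum_a\bigl(u^2-u-2\delta_a(u-1)\bigr)\Bigr].
\]
The left side is
\[
\text{LHS}=\sum_\sigma\sum_i(1-p_i)\bigl(u^2-u-2\delta_i(u-1)\bigr).
\]
Subtracting, the coefficient of $u^2-u$ is $(m-1)-(m+1)=-2$. The $\delta$-terms combine to
\[
2(u-1)\sum_i p_i\delta_i=2(u-1)u.
\]
So the difference is $\sum_\sigma\bigl[-2(u^2-u)+2u(u-1)\bigr]=0$. This is a clean termwise cancellation.

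\textbf{Identity \eqref{eq:direct-shift-S1}.} The same bookkeeping gives
\[
\text{LHS}=\sum_\sigma\Bigl[(m-1)u-\sum_{B:\,p_B\geq1}(|B|-p_B)\Bigr]=mS_1(\mathcal A)-\sum_\sigma\sum_{B:\,p_B\geq1}|B|,
\]
using $\sum_{B:\,p_B\geq1}p_B=u$. It therefore remains to show
\[
\sum_\sigma\sum_{B:\,p_B\geq1}|B|=\frac{m+1}{2}\,m!.
\]
This is the step that needs an idea. I will count it as in \eqref{eq:direct-S1}: a prescribed cycle on a subset $B$ of size $n$ occurs with multiplicity $(n-1)!(m-n)!$, and multiplying by $|B|=n$ gives the weight $n!(m-n)!$. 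This weight is symmetric under $B\leftrightarrow I\setminus B$.

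Now pair each proper nonempty $B$ with its complement. Because the total weight is $1$ and the weights are integers, exactly one of $p_B\geq1$ and $p_{I\setminus B}=1-p_B\geq1$ holds. So each complementary pair contributes once. This gives
\[
\frac12\sum_{n=1}^{m-1}\binom mn n!(m-n)!=\frac{(m-1)\,m!}{2}.
\]
The block $B=I$ has $p_I=1\geq1$ and contributes $m!$. The total is $\tfrac{m+1}{2}\,m!$, as needed. The case $m=1$ is immediate and consistent with this count.

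I expect this complement-pairing to be the only real obstacle. Everything else is routine expansion, provided the insertion bijection for $\star$ is stated carefully.
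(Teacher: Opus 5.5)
Your proposal is correct and takes essentially the same route as the paper. Both arguments use the per-permutation shift $u(\sigma;\mathcal A^{(i)})=u(\sigma)-\mathbf 1_{p_{C_i}>0}$, insert the new weight $-1$ label to expand the right side of \eqref{eq:direct-shift-S2}, and pair complements to evaluate $\sum_\sigma L_+(\sigma)=\frac{m+1}{2}m!$. The only cosmetic difference is in \eqref{eq:direct-shift-S2}: you subtract the two sides directly, whereas the paper shows that both equal $\sum_\sigma\bigl((m+1)u(u-1)-2L_+(u-1)\bigr)$.
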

\begin{proof}
For fixed \(\sigma\), let
\[
L_+=L_+(\sigma)
:=
\sum_{\substack{C\text{ a cycle of }\sigma\\ p_C>0}}|C|,
\]the number of labels lying in cycles of strictly positive total weight.
For \(i\in I\), let \(C_i\) be the cycle of \(\sigma\) containing \(i\). Passing from \(\mathcal A\) to \(\mathcal A^{(i)}\) changes only the weight of \(C_i\), from \(p_{C_i}\) to \(p_{C_i}-1\). Since all weights are integers,
\[
(x-1)_+=x_+-\mathbf 1_{x>0}.
\]Hence
\[
u(\sigma;\mathcal A^{(i)})
=
u(\sigma)-\mathbf 1_{p_{C_i}>0}.
\]Now
\[
\sum_{i\in I}(1-p_i)=m-1,
\]because \(s(\mathcal A)=1\). Moreover,
\[
\begin{aligned}
\sum_i(1-p_i)\mathbf 1_{p_{C_i}>0}
&=
\sum_{\substack{C\text{ a cycle of }\sigma\\p_C>0}}
\sum_{i\in C}(1-p_i)\\
&=
\sum_{C:p_C>0}(|C|-p_C)\\
&=
L_+-u(\sigma),
\end{aligned}
\]since \(p_C>0\) implies \((p_C)_+=p_C\). Therefore
\[
\begin{aligned}
\sum_i(1-p_i)u(\sigma;\mathcal A^{(i)})
&=
\sum_i(1-p_i)(u(\sigma)-\mathbf 1_{p_{C_i}>0})\\
&=
(m-1)u(\sigma)-(L_+-u(\sigma))\\
&=
mu(\sigma)-L_+.
\end{aligned}
\]Summing over \(\sigma\in\mathfrak S_I\) gives
\[
\sum_i(1-p_i)S_1(\mathcal A^{(i)})
=
mS_1(\mathcal A)
-
\sum_{\sigma\in\mathfrak S_I}L_+(\sigma).
\]
It remains to calculate the last sum. Fix a nonempty subset
\[
B\subseteq I,\qquad |B|=b.
\]If \(p_B>0\), then whenever \(B\) is the support of a cycle of \(\sigma\), that cycle contributes \(b\) to \(L_+(\sigma)\). There are
\[
(b-1)!(m-b)!
\]permutations for which \(B\) is the support of one cycle: there are \((b-1)!\) cyclic orders on \(B\), while the complement is arbitrarily permuted. Hence
\[
\sum_{\sigma}L_+(\sigma)
=
\sum_{b=1}^m
b!(m-b)!
\sum_{\substack{B\subseteq I\\|B|=b}}
\mathbf 1_{p_B>0}.
\]For every proper nonempty \(B\subsetneq I\),
\[
p_{I\setminus B}=1-p_B.
\]
Since \(p_B\in\mathbb Z\), exactly one of \(p_B\) and \(1-p_B\) is strictly positive. Thus 
\[
\sum_\sigma L_+(\sigma) - m! = \sum_{b=1}^{m-1} b!(m-b)!\sum_{\substack{B\subseteq I\\|B|=b}}\mathbf 1_{p_B\leq0}.
\]
$\mathbf 1_{p_B\leq0} + \mathbf 1_{p_B>0} = 1$, so
\[
\sum_{\sigma} L_+(\sigma) = m! + \frac12\sum_{b=1}^{m-1} b!(m-b)!\binom{m}{b} = \frac{m+1}{2}m!.
\]
Substituting into the formula above proves
\[
\sum_i(1-p_i)S_1(\mathcal A^{(i)})
=
mS_1(\mathcal A)-\frac{m+1}{2}m!.
\]
The same calculation applied to $u(u-1)$ gives
\[
\sum_i(1-p_i)u(\sigma;\mathcal A^{(i)})
\bigl(u(\sigma;\mathcal A^{(i)})-1\bigr)
=(m+1)u(\sigma)(u(\sigma)-1)-2L_+(\sigma)(u(\sigma)-1).
\]
On the other hand, append a new label $\diamondsuit$ with weight $-1$ to $I$. For any $\sigma'\in\mathfrak S_{I\sqcup \{\diamondsuit\}}$, $\diamondsuit$ is either a singleton cycle or is contained in a cycle of length at least two. $\sigma'$ reduces to a permutation $\sigma\in\mathfrak S_I$ by removing $\diamondsuit$ from its cycle. If $\diamondsuit$ is a singleton cycle, then
$u(\sigma')=u(\sigma)$.
If it is inserted into a cycle $C$ of $\sigma$, then
$u(\sigma')=u(\sigma)-1$ when $p_C>0$, and
$u(\sigma')=u(\sigma)$ otherwise. For a cycle $C$ of $\sigma$, there are $|C|$ ways to insert $\diamondsuit$ into that cycle. Hence
\[
\begin{gathered}
\sum_{\sigma'\in\mathfrak S_{I\sqcup \{\diamondsuit\}}}u(\sigma';\mathcal A\sqcup\{-1\})\bigl(u(\sigma';\mathcal A\sqcup\{-1\})-1\bigr) \\
= \sum_{\sigma\in\mathfrak S_I}\bigl((m+1-L_+(\sigma))u(\sigma)(u(\sigma)-1) + L_+(\sigma)(u(\sigma)-1)(u(\sigma)-2)\bigr) \\
= \sum_{\sigma\in\mathfrak S_I}\bigl((m+1)u(\sigma)(u(\sigma)-1)-2L_+(\sigma)(u(\sigma)-1)\bigr).
\end{gathered}
\]
This proves \eqref{eq:direct-shift-S2}.
\end{proof}

We use the local abbreviation
$g_\eta^{(k)}(\mathcal A)=g_{\eta,(-p_i)_{i\in I}}^{(k)}$,
just as the notation $f_\eta^{(k)}(\mathcal A)$, where $\eta = 1,e_X,K_X,K_X^2$.

\begin{proposition}\label{prop:direct-four-formulas}
For integer weights with $s(\mathcal A)=1$ and $|I|=k\geq1$,
\begin{align}
f_{K_X}^{(k)}(\mathcal A)
&=S_1(\mathcal A)-k!,
\label{eq:direct-fK}\\
f_{K_X^2}^{(k+1)}(\mathcal A)
&=\frac{S_2(\mathcal A\sqcup\{-1\})-S_1(\mathcal A\sqcup\{-1\})}{2}
+\frac{(k+1)!}{24}\left(1-\sum_i p_i^2\right).
\label{eq:direct-fK2}
\end{align}
For $s(\mathcal A)=0$ and $|I|=k\geq2$,
\begin{align}
g_{K_X}^{(k-1)}(\mathcal A)
&=\frac12-\frac{S_1(\mathcal A)}{k!},
\label{eq:direct-gK}\\
g_{K_X^2}^{(k)}(\mathcal A)
&=-\frac1{12}+\frac{S_1(\mathcal A)-S_2(\mathcal A)}{2k!}
+\frac1{24}\sum_i p_i^2.
\label{eq:direct-gK2}
\end{align}
The coefficient statements concern nonzero modes. The finite sums on the right are also defined when a weight is zero and provide the extensions used in the calculation below.
\end{proposition}
\begin{proof}
First we show \eqref{eq:direct-fK}. We rewrite \eqref{eq:fK-polynomial} at $s(\mathcal A)=1$. For $1\leq b\leq m-1$, let
\[
P_{m,b}(x)=\sum_{a=b+1}^{m}
\frac{(-1)^{a-b-1}m!}{a(a-1)}
\binom{m-b-1}{a-b-1}x^a.
\]
Its second derivative is $m!x^{b-1}(1-x)^{m-b-1}$, and its value and first derivative vanish at zero. There is 
\[
\frac{d^2}{dx^2}
\left(P_{m,b}(x)-P_{m,m-b}(1-x)\right)=0.
\]
Hence the difference is linear:
\[
P_{m,b}(x)-P_{m,m-b}(1-x)
=(b-1)!(m-b-1)!(mx-b).
\]
The constants follow by integrating the second derivative $P''_{m,m-b}(x)$ on $[0,1]$, while this equality holds for $x\in \mathbb{R}$. 
Comparing with \eqref{eq:fK-polynomial} we get
\[f_{K_X}^{(k)} = -\frac{(k-1)(k-1)!}{2} +\frac12 \sum_{\varnothing\neq B\subsetneq I} \operatorname{sgn}(p_B) P_{k,|B|}(p_B).\]

For an integer $x$, either $x\leq0$ or $x\geq1$, thus exactly one of \(p_B\) and \(p_{I\setminus B}\) is positive. Pairing $B$ and $I\backslash B$ in \eqref{eq:fK-polynomial} thus gives
\begin{align*}
f_{K_X}^{(k)}(\mathcal A)
&=-\frac{(k-1)(k-1)!}{2}
+\frac14\sum_{b=1}^{k-1}(b-1)!(k-b-1)!
\sum_{\substack{B\subseteq I\\|B|=b}}|kp_B-b|\\
&=-\frac{k!}{2}
+\frac12\sum_{b=1}^{k}(b-1)!(k-b)!
\sum_{\substack{B\subseteq I\\|B|=b}}|p_B|.
\end{align*}
We used $|kx-b|=(k-b)|x|+b|1-x|$, valid for $x\in\mathbb Z$. Also
$\sum_b(b-1)!(k-b)!\sum_{|B|=b}p_B=k!$.
Since $2x_+=|x|+x$, we prove \eqref{eq:direct-fK}. The $k=1$ case can be checked directly.


We next show \eqref{eq:direct-gK}. From 
\(g_{\epsilon,\mu\sqcup\{1\}}^{(k)}
=
-c_{k,1,\epsilon}(\mu)
=
-\frac1{k!}f_{\epsilon,\mu}^{(k)}\)
and
\(
[\mathfrak G_k(\gamma),\mathfrak a_{-1}'(1_X)]
=
\frac1{k!}\mathfrak a_{-1}^{(k+1)}(\gamma)
\)
we can read out the following two relations for the coefficients \(g_\epsilon^{(k)}\) and \(f_\epsilon^{(k)}\)
\begin{align}
g_\epsilon^{(k)}(\mathcal B\sqcup\{-1\})
&=-\frac1{k!}f_\epsilon^{(k)}(\mathcal B),
\label{eq:direct-reconstruction-initial}\\
f_\epsilon^{(k+1)}(\mathcal A)
&=-k!\sum_i(1-p_i)g_\epsilon^{(k)}(\mathcal A^{(i)}).
\label{eq:direct-reconstruction-shift}
\end{align}
Here $s(\mathcal B)=s(\mathcal A)=1$, with the respectively suitable length. These two formulas determine $g_\epsilon^{(k)}$ by induction. For $g_\epsilon^{(k)}(\mathcal C)$ with $\mathcal C = \{q_i\}_{i\in I}$, if $-1\in\mathcal C$ then \eqref{eq:direct-reconstruction-initial} gives $g_\epsilon^{(k)}(\mathcal C) = -\frac{1}{k!}f_{\epsilon}^{(k)}(\mathcal C\backslash \{-1\})$. Suppose we know $g_\epsilon^{(k)}(\mathcal C)$ for all $\mathcal C$ containing $-n$ and $-(n+1)\in\mathcal C'$. Let $p_{i} = -(n+1)$ for some $i\in I$. Define $\mathcal A' = \mathcal C'\backslash\{-(n+1)\}\sqcup\{-n\}$, then $s(\mathcal{A}') = 1, \mathcal C' = \mathcal{A'}^{(i)}$. By \eqref{eq:direct-reconstruction-shift} we have
\[
\begin{gathered}
    f_\epsilon^{(k+1)}(\mathcal A')
=-k!\sum_{j\neq i}(1-p_j)g_\epsilon^{(k)}(\mathcal{A'}^{(j)})-k!(1+n)g_\epsilon^{(k)}(\mathcal C')\\
g_\epsilon^{(k)}(\mathcal C') = -\frac{1}{k!(1+n)}f_\epsilon^{(k+1)}(\mathcal A')-\frac{1}{1+n}\sum_{j\neq i}(1-p_j)g_\epsilon^{(k)}(\mathcal{A'}^{(j)}),
\end{gathered}
\]
while $-n\in\mathcal{A'}^{(j)}$ for $j\neq i$, so $g_\epsilon^{(k)}(\mathcal{A'}^{(j)})$ is known by the induction hypothesis. This proves that $g_\epsilon^{(k)}(\mathcal C')$ is determined by the two reconstruction formulas.

The expression on the right of \eqref{eq:direct-gK} satisfies \eqref{eq:direct-reconstruction-initial} by Lemma~\ref{lem:direct-deletion} and \eqref{eq:direct-fK}. It satisfies \eqref{eq:direct-reconstruction-shift} by \eqref{eq:direct-shift-S1} and $\sum_i(1-p_i)=m-1$ with $m=k+1$. Therefore it is the coefficient given by \eqref{eq:g-reconstruction}, proving \eqref{eq:direct-gK}.


For the term involving $K_X^2$, temporarily write $h_m(\mathcal A)$ for the right side of \eqref{eq:direct-gK2}, on $\mathcal A$ of length $m$ and $s(\mathcal A) = 0$. Lemma~\ref{lem:direct-merge-moments} and a direct computation gives
\begin{equation}\label{eq:direct-h-merge}
\sum_{\{i,j\}\in\binom I2}(p_i+p_j)
 h_{m-1}(\mathcal A_{i,j})
=\frac{\Omega(\mathcal A)}2
\left(\frac12-\frac{S_1(\mathcal A)}{m!}\right),\qquad m\geq3.
 \end{equation}

When $s(\mathcal A)=1$, \eqref{eq:direct-shift-S2} also gives
\begin{equation}\label{eq:direct-h-shift}
-m!\sum_i(1-p_i)h_m(\mathcal A^{(i)})
=-(m+1)!h_{m+1}(\mathcal A\sqcup\{-1\}).
\end{equation}
Both sides of \eqref{eq:direct-h-shift} are then exactly the right side of \eqref{eq:direct-fK2}. Call this expression $b_m(\mathcal A)$. It has $b_1(\{1\})=0$. Apply \eqref{eq:direct-h-merge} to the total-zero family $\mathcal A\sqcup\{-1\}$ and separate pairs involving the new label. By \eqref{eq:direct-reconstruction-initial} for the already proved coefficient of $K_X$ and by \eqref{eq:direct-h-shift},
\begin{align}
\mathfrak U b_{m-1}(\mathcal A)
&=-m!\left(\frac{\Omega(\mathcal A)}2
 g_{K_X}^{(m)}(\mathcal A\sqcup\{-1\})
 -\sum_i(p_i-1)h_m(\mathcal A^{(i)})\right)\notag\\
&=\frac{\Omega(\mathcal A)}2f_{K_X}^{(m)}(\mathcal A)+b_m(\mathcal A).
\label{eq:direct-b-recursion}
\end{align}
We used $\Omega(\{-1\})=0$. Thus $b_m$ satisfies exactly \eqref{eq:rec-K2} when $s(\mathcal A)=1$, with the same initial value. Uniqueness proves \eqref{eq:direct-fK2}.

Finally, $b_m$ and $h_m$ satisfy \eqref{eq:direct-reconstruction-initial} and \eqref{eq:direct-reconstruction-shift} by definition of $b_m$ and \eqref{eq:direct-h-shift}.
The same finite-reconstruction uniqueness proves \eqref{eq:direct-gK2}. 
\end{proof}

\subsection{Proof of Proposition~\ref{prop:two-identities}}\label{subsec:proof-two-identities} We use the formulas of Proposition~\ref{prop:direct-four-formulas} to prove the two identities in Proposition~\ref{prop:two-identities}. 
\subsubsection{Proof of \eqref{eq:identity-K}}
Let $\lambda=(-p_i)_{i\in I}$ with $m=|I|\geq2$ and $s(\mathcal A)=0$. Its entries are nonzero. For $r=2,3$, abbreviate the sums already occurring in the proposition by
\begin{equation}\label{eq:direct-Zr}
Z_r(\mathcal A)=
\sum_{\substack{\pi\in\Pi_r(I)\\p_B=0\ (B\in\pi)\\2\leq|B|\leq m-2(r-1)}}
\prod_{B\in\pi}|B|!.
\end{equation}
They are zero when $m<2r$. For a permutation $\sigma$ of $I$, denote by $L_+,L_-,L_0$ the numbers of labels in cycles with positive, negative, and zero sum, respectively; thus $L_++L_-+L_0=m$.

Adjoin a new label of weight $1$. Making it a singleton increases $u(\sigma)$ by one. Inserting it in a cycle increases $u(\sigma)$ by one if the cycle sum is nonnegative, and otherwise leaves $u(\sigma)$ unchanged. There are $m+1$ positions to insert $i$ into $\sigma$ in total. Hence
\begin{equation}\label{eq:direct-plus-one}
S_1(\mathcal A\sqcup\{1\})
=(m+1)S_1(\mathcal A)+m!+\sum_{\sigma\in\mathfrak S_I}(L_+(\sigma)+L_0(\sigma)).
\end{equation}
A cycle with underlying subset $B$ contributes $|B|!(m-|B|)!$ to the sum $
\sum_{\sigma} (L_+(\sigma)+L_0(\sigma))$ when $p_B\geq0$. The full subset contributes $m!$. For proper subsets, $p_B + p_{I\backslash B} = 0$. Exactly one of the two is nonnegative unless both are zero, in which case there is one additional contribution $Z_2(\mathcal A)$. Thus
\begin{equation}\label{eq:direct-nonnegative-cycles}
\sum_{\sigma\in\mathfrak S_I}(L_+(\sigma)+L_0(\sigma))
=\frac{m+1}{2}m!+Z_2(\mathcal A).
\end{equation}
Substitute \eqref{eq:direct-plus-one}--\eqref{eq:direct-nonnegative-cycles} into \eqref{eq:direct-fK}, and then use \eqref{eq:direct-gK}:
\begin{align*}
f_{K_X}^{(m+1)}(\mathcal A\sqcup\{1\})
&=(m+1)S_1(\mathcal A)-\frac{m-1}{2}m!+Z_2(\mathcal A)\\
&=-(m+1)!g_{K_X}^{(m-1)}(\mathcal A)+m!+Z_2(\mathcal A).
\end{align*}
This is exactly \eqref{eq:identity-K} with $d=m+1$.

\subsubsection{Proof of \eqref{eq:identity-K2}}
We compare the expressions of $f_{K_X^2}^{(m+2)}(\mathcal A\sqcup \{1\})$ and $g_{K_X^2}^{(m)}(\mathcal A)$. By \eqref{eq:direct-fK2}:
\[
f_{K_X^2}^{(m+2)}(\mathcal A\sqcup \{1\}) = \frac{S_2(\mathcal A\sqcup\{1,-1\})-S_1(\mathcal A\sqcup\{1,-1\})}{2}
-\frac{(m+2)!}{24}\sum_i p_i^2.
\]
We first count their insertions into a fixed permutation $\sigma$ of $I$, with $u=u(\sigma)$ and $L_+,L_-,L_0$ as above. Suppose the new label of $+1, -1$ are $\blacktriangle$,$\blacktriangledown$; $I' = I\sqcup\{\blacktriangle,\blacktriangledown\}$. We run through all permutations $\rho$ of $I'$ that reduce to $\sigma$ by deleting $\{+1,-1\}$. Notice that there is 
\[
\left.
\frac{\partial_t^2-\partial_t}{2}e^{tu}
\right|_{t=0}
=
\frac{u^2-u}{2}.
\]
So we consider $e^{tu(\sigma)}$ instead sometimes, where $t$ is formal.

There are four cases. (i) If both new labels are in cycles without old labels, they either form one cycle of weight zero or form two singleton cycles, giving $1+e^t$ in the exponential sum relative to $e^{tu}$. (ii) If exactly one new label is a singleton and the other lies in an old cycle, the contribution is
$m(1+e^t)+(e^t-1)L_0$. (iii) If both lie in the same old cycle $B$, there are $|B|(|B|+1)$ choices and the cycle weight is unchanged. (iv) Finally, if they lie in distinct old cycles, the contribution is
\[
m^2-\sum_B|B|^2
+(e^t-1)\left((L_++L_0)(L_-+L_0)
-\sum_{B:\,p_B=0}|B|^2\right)
+(e^{-t}-1)L_+L_-.
\]
The subtraction excludes the case both labels in the same zero-weight cycle. Using \(1+e^t=2+(e^t-1)\), \(\sum_B|B|=m\), and \(L_++L_-+L_0=m\), the sum of the four contributions simplifies to
\begin{equation}\label{eq:direct-two-label-count}
\begin{split}
\sum_{\rho:\,\rho|_I=\sigma}e^{t u(\rho)}
=e^{tu}\Bigg(& (m+1)(m+2)\\
&+(e^t-1)\left((m+1)(1+L_0)+L_+L_-
-\sum_{B:\,p_B=0}|B|^2\right)\\
&+(e^{-t}-1)L_+L_-\Bigg).
\end{split}
\end{equation}
Here $\rho|_I$ means deletion of labels $\{\blacktriangle, \blacktriangledown\}$, not restriction of a permutation map. The variable $t$ is only used to read the first two moments of this finite sum.

Apply $(\partial_t^2-\partial_t)/2$ at $t=0$ and sum over $\sigma$. The result is
\begin{align}
&\frac{S_2(\mathcal A\sqcup\{1,-1\})-S_1(\mathcal A\sqcup\{1,-1\})}{2}\notag\\
&\quad=\frac{(m+1)(m+2)}2 S_2(\mathcal A)
-\frac{m(m+1)}2 S_1(\mathcal A)\notag\\
&\qquad+\sum_{\sigma}u(\sigma)
\sum_{B:\,p_B=0}|B|(m+1-|B|)
+\sum_{\sigma}L_+L_-.
\label{eq:direct-two-label-moment}
\end{align}
We evaluate the last two sums separately.

For the first, fix the zero-sum cycle $B$. The complementary subset $I\backslash B$ has total sum zero and contribute $S_1(\mathcal A_{I\backslash B})$, where $\mathcal A_C=(p_i)_{i\in C}$ denotes restriction, not merging. The full cycle contributes zero. Using \eqref{eq:direct-gK} on the complement therefore gives
\begin{align}
&\sum_\sigma u(\sigma)\sum_{B:\,p_B=0}|B|(m+1-|B|)\notag\\
&\quad=\sum_{b=2}^{m-2}
\sum_{\substack{B\subseteq I\\|B|=b,\ p_B=0}}
 b!(m-b+1)S_1(\mathcal A_{I\backslash B})\notag\\
&\quad=\frac{m+2}{2}Z_2(\mathcal A)
-\sum_{\substack{\pi\in\Pi_2(I)\\p_B=0\ (B\in\pi)\\2\leq|B|\leq m-2}}
\sum_{C\in\pi}(|C|+1)!
 g_{K_X}^{(|C|-1)}(\mathcal A_C)
 \prod_{B\in\pi\backslash\{C\}}|B|!.
\label{eq:direct-marked-term}
\end{align}
For a two-block partition of sizes $b,c$, the two constant contributions have sum
$\tfrac12(b!(c+1)!+c!(b+1)!)=(m+2)b!c!/2$.

For the second sum of \eqref{eq:direct-two-label-moment}, pick one positive cycle $B$ and one negative cycle $C$. When their complement is empty, \(I=B\sqcup C\). Since \(s(\mathcal A)=0\), either $p_B>0$ and $p_C<0$, or $p_B=0$ and $p_C=0$. Hence the contribution is
\[
\sum_{\substack{\pi=\{B,C\}\in\Pi_2(I)\\ p_Bp_C<0}}
|B|!\,|C|!
=
\frac{m!(m-1)}2-Z_2(\mathcal A).
\] When it is nonempty, regard it as a third block. When $s(\mathcal A) = 0$, there are two choices of a positive and a negative block if none of the three block sums is zero, one choice if exactly one is zero, and no choice if all three are zero. The contribution is
\[
\sum_{\pi\in\Pi_3(I)}
\left(
2-\#\{B\in\pi:p_B=0\}
+\mathbf 1_{\{p_B=0\text{ for all }B\in\pi\}}
\right)
\prod_{B\in\pi}|B|!.
\]Equivalently,
\[
2\sum_{\pi\in\Pi_3(I)}\prod_{B\in\pi}|B|!
-
\sum_{\pi\in\Pi_3(I)}
\#\{B\in\pi:p_B=0\}\prod_{B\in\pi}|B|!
+
Z_3(\mathcal A).
\]
The unrestricted three-block sum is
\[
\sum_{\pi\in\Pi_3(I)}\prod_{B\in\pi}|B|!=\frac{m!(m-1)(m-2)}{12}.
\]
Also
\[
\sum_{\pi\in\Pi_3(I)}
\#\{B\in\pi:p_B=0\}\prod_{B\in\pi}|B|!
=\frac{m-2}{2}Z_2(\mathcal A).
\]
Indeed, fixing a zero block of size $b$ leaves $c=m-b$ labels whose unrestricted two-block count is $c!(c-1)/2$; pairing the two complementary zero blocks gives $(b+c-2)b!c!/2$. Consequently
\begin{equation}\label{eq:direct-opposite-count}
\sum_{\sigma\in\mathfrak S_I}L_+L_-
=\frac{(m+1)!(m-1)}6-\frac m2 Z_2(\mathcal A)+Z_3(\mathcal A).
\end{equation}
Empty ranges in these summations contribute zero.

Finally apply \eqref{eq:direct-fK2} to $\mathcal A\sqcup\{1\}$. Its polynomial term is
$-(m+2)!\sum_i p_i^2/24$. Substitution of \eqref{eq:direct-two-label-moment}, \eqref{eq:direct-marked-term}, and \eqref{eq:direct-opposite-count}, followed by \eqref{eq:direct-gK}--\eqref{eq:direct-gK2}, gives
\begin{align*}
f_{K_X^2}^{(m+2)}(\mathcal A\sqcup\{1\})
={}&-(m+2)!g_{K_X^2}^{(m)}(\mathcal A)
-(m+1)!g_{K_X}^{(m-1)}(\mathcal A)+\frac{(m+2)!}{12}\\
&+Z_2(\mathcal A)+Z_3(\mathcal A)\\
&-\sum_{\substack{\pi\in\Pi_2(I)\\p_B=0\ (B\in\pi)\\2\leq|B|\leq m-2}}
\sum_{C\in\pi}(|C|+1)!
 g_{K_X}^{(|C|-1)}(\mathcal A_C)
 \prod_{B\in\pi\backslash\{C\}}|B|!.
\end{align*}
This is \eqref{eq:identity-K2} with $d=m+2$. Now we are done with the proof of Proposition~\ref{prop:two-identities}.

\end{document}